\documentclass[10pt]{article}

\usepackage{amsmath,amssymb,amsthm,mathrsfs,dsfont,graphicx}

\usepackage[margin=3cm]{geometry} %¿í°æÊ±ÓÃ£¬ÅäºÏË«±¶ÐÐ¾à

\usepackage{titlesec,hyperref}
\usepackage{esint}
\usepackage{color}

\usepackage{fancyhdr}
\titleformat{\subsection}{\it}{\thesubsection.\enspace}{1pt}{}

\newtheorem{theo}{Theorem}[section]

\newtheorem{rema}[theo]{Remark}

\newtheorem{lemm}[theo]{Lemma}
\newtheorem{defi}[theo]{Definition}

\newtheorem{prop}[theo]{Proposition}

\numberwithin{equation}{section}

\allowdisplaybreaks %ÔÊÐí¹«Ê½·ÖÒ³ÏÔÊ¾

\newcommand{\ii}{\textup{i}}

\usepackage[backend=biber,style=numeric]{biblatex} 
\begin{document}

\title{Non-uniqueness for the hyperdissipative Navier-Stokes equations with arbitrarily small subcritical data
\hspace{-4mm}
}

\author{Zipeng $\mbox{Chen}^1$ \footnote{Email: chenzp26@mail2.sysu.edu.cn},\quad Song $\mbox{Liu}^1$ \footnote{Email: lius37@mail2.sysu.edu.cn}\quad
	 and\quad
	Zhaoyang $\mbox{Yin}^{1}$\footnote{E-mail: mcsyzy@mail.sysu.edu.cn}\\
    $^1\mbox{School}$ of Science,\\ Shenzhen Campus of Sun Yat-sen University, Shenzhen 518107, China}
 
\date{}
\maketitle
\hrule

\begin{abstract}
 In this paper, we consider the hyperdissipative Navier-Stokes equations with fractional dissipation $(-\Delta)^{\beta}$ with $\beta>1$. We prove that smooth solutions of the hyperdissipative Navier-Stokes equations are non-unique with arbitrarily small initial data in ${B}^{-\beta-\alpha}_{\infty,1}(\mathbb{T}^d)$ for any $\alpha>0$. Moreover, we show the existence of a solution with arbitrarily small initial data in ${B}^{-\beta-\alpha}_{\infty,1}(\mathbb{T}^d)$ ($\alpha>0$) that grows arbitrarily large in
 $\dot{B}^{-s}_{\infty,\infty}(\mathbb{T}^d)$ for all $s\in\mathbb{R}$ in arbitrarily small time. It is worth pointing out that ${B}^{-\beta-\alpha}_{\infty,1}(\mathbb{T}^d)$ lies in the subcritical regime when $0<\alpha<\beta-1$. To the best of our knowledge, this is the first non-uniqueness result of the Navier-Stokes equations with initial data at the subcritical regularity. To show the sharpness of the above results, we establish the local well-posedness of the hyperdissipative Navier-Stokes equations with initial data in $\dot{B}^{-\beta-\alpha}_{\infty,\infty}(\mathbb{T}^d)$ with $\alpha< 0$.
\end{abstract}
\noindent {\sl Keywords:} Non-uniqueness, Navier-Stokes equations, 

\vskip 0.2cm

\noindent {\sl AMS Subject Classification:} 35Q30, 76D03  \

\vspace*{10pt}

\tableofcontents

\section{Introduction }
  In this paper, we consider the following $d$-dimensional Navier-Stokes equations with fractional dissipation on $\mathbb{T}^d$:
\begin{equation}\label{e:NSf}
\begin{cases}
\partial_tv+ \operatorname{div}(v\otimes v)+(-\Delta)^{\beta} v+\nabla p=0, \\
\operatorname{div}\,v=0,
\end{cases}
\end{equation}
where $d\geq2$, $\beta>\frac{1}{2}$, $\mathbb{T}^d=\mathbb{R}^d/\mathbb{Z}^d$ is the $d$-dimensional torus and the fractional dissipation operator $(-\Delta)^\beta$ on the flat torus is defined via the Fourier transform by
\begin{gather*}
    \mathcal{F}((-\Delta)^\beta v)(\xi)=|\xi|^{2\beta}\mathcal{F}(v)(\xi),\quad\forall\xi\in\mathbb{T}^d
\end{gather*}
Here, $v=(v_1,\dots,v_d)^{\top}\in\mathbb{R}^d$ and $p=p(t,x)\in\mathbb{R}$ represent the flow velocity and the scalar pressure, respectively. The generalized Navier-Stokes equation \eqref{e:NSf} are invariant under the scaling
\begin{gather}
    v(t,x)\mapsto\lambda^{2\beta-1} v(\lambda^{2\beta}t,\lambda x ),\quad p(t,x)\mapsto\lambda^{4\beta-2}p(\lambda^{2\beta}t,\lambda x).\label{scale}
\end{gather}
Accordingly, a function space is called critical for \eqref{e:NSf} if its norm is invariant under the scaling transformation \eqref{scale}.

When $\beta = 1$, system (\ref{e:NSf}) takes the form of the standard incompressible Navier–Stokes equations:
\begin{equation}\label{e:NS}
\begin{cases}
\partial_tv+ \text{div}(v\otimes v)-\Delta v+\nabla p=0, \\
\text{div}\,v=0,
\end{cases}
\end{equation}
In the remarkable paper \cite{leray}, for any dimension $d\geq2$, Leray first showed that there exists a weak solution in $L^\infty(\mathbb{R}^+;L^2(\mathbb{R}^d))\cap L^2(\mathbb{R}^+;\dot{H}^1(\mathbb{R}^d))$ for any $L^2$ solenoidal initial data, which satisfies the energy inequality
\begin{gather*}
    \|v(t)\|^2_{L^2}+2\int^t_0\|\nabla v(s)\|^2_{L^2}ds\leq\|v(0)\|^2_{L^2},\quad\forall t\geq0.
\end{gather*}
For smooth bounded domains with Dirichlet boundary conditions, Hopf \cite{hopf} established an analogous result. This class of weak solutions is now known as Leray–Hopf weak solutions. 
To this day, the uniqueness problem for Leray–Hopf weak solutions of the Navier–Stokes equations remains an open question in dimensions greater than two. For $\beta\geq\frac{5}{4}$, the Leray-Hopf weak solution class is scaling-critical or scaling-subcritical. In this regime, Lions \cite{lion54} proved the existence and uniqueness of Leray-Hopf weak solutions to the initial-value problem.

 Convex integration techniques for the Euler equations have been extensively developed; see, for instance, \cite{introductionconvex1,introductionconvex2,Isett3,B1,2donsager}.
In the breakthrough work \cite{ns有限能量不唯一}, Buckmaster-Vicol first showed non-uniqueness of finite energy weak solutions to the 3D Navier-Stokes equations using the convex integration framework. The key ingredient in \cite{ns有限能量不唯一} is $L^2_x$-based intermittent spatial building block, which enables control of the dissipativity term $(-\Delta)u$ in the Navier-Stokes equations.
Following the convex integration constructions developed in \cite{ns有限能量不唯一}, the authors of \cite{buckmaster1} and \cite{ns有限能量不唯一低于lion指标} extend the non-uniqueness results to the hyperdissipative Navier–Stokes equations with dissipation exponent $\beta<\frac{5}{4}$, below the Lions’ exponent. For applications to other models, we refer the reader to \cite{GeolemmaMHD,modena,transport1,transport2,stationary1}.

The weak-strong uniqueness property for (\ref{e:NS}) in critical spaces has also attracted considerable attention in the literature. The Lady\v{z}enskaja-Prodi-Serrin criteria \cite{kozono,lady,Prodi,serrin} states that if a Leray-Hopf weak solution belongs to the critical or subcritical regime:
\begin{gather*}
     L^p_tL^q_x\quad\mathrm{with}\quad\frac{2}{p}+\frac{d}{q}\leq1.
\end{gather*}
then uniqueness is guaranteed for Leray–Hopf solutions with the same initial data (more precisely, we need $C_tL^\infty_x $ rather than $L_tL^\infty_x $ when $p=\infty$). For the generalized Navier-Stokes equations, Zhou \cite{GLPS} established uniqueness of Leray-Hopf weak solution belonging to the critical or subcritical Serrin class
\begin{gather*}
     L^p_tL^q_x\quad\mathrm{with}\quad\frac{2\beta}{p}+\frac{d}{q}\leq2\beta-1.
\end{gather*}

In \cite{serrin准则luo}, Cheskidov-Luo employed the intermittent temporal building block to prove the non-uniqueness of (\ref{e:NS}) on $L^p_tL^\infty_x$ for all $p<2$. Moreover, Cheskidov-Luo \cite{MR4610908} established the non-uniqueness on the class $L^{\infty}_tL^q_x$ for all $q<2$ for the 2D Navier-Stokes equations. Both results are sharp with respect to the Lady\v{z}enskaja-Prodi-Serrin criteria.
For the 3D Navier-Stokes equations with hyperviscosity exponent $\beta$ beyond the Lion exponent $\frac{5}{4}$, Li-Qu-Zeng-Zhang \cite{qupeng} also showed sharp non-uniqueness in some supercritical spaces. For applications to other models, we refer the reader to \cite{MHDzeng1,yeMHD,Boussinesqchen}.

By the mild formulation of equations, the global well-posedness for small data and local well-posedness for large data for the standard incompressible Navier–Stokes equations have been established in certain critical spaces
\begin{gather*}
    \dot{H}^{\frac{d}{2}-1}\subset L^d\subset \dot{B}^{-1+\frac{d}{p}}_{p,\infty}\subset BMO^{-1}\subset \dot{B}^{-1}_{\infty,\infty}
\end{gather*}
where $d< p<\infty$. 
Fujita-Kato\cite{30} and Kato\cite{40} proved that this holds for initial data in the first two spaces, respectively. A corresponding result in Besov spaces $\dot{B}^{-1+\frac{d}{p}}_{p,\infty}(\mathbb{R}^d) (d< q<\infty)$ was obtained by Cannone \cite{13} and Planchon \cite{53}. Koch-Tataru\cite{BMO-1wp} established that global well-posedness holds for small data in $BMO^{-1}$. For the fractional Navier-Stokes equations with $\beta\in (\frac{1}{2},1)$, Zhai \cite{BMO-2beta-1} and Zhai-Yu \cite{B-2beta-1} established global well-posedness for small initial data in $BMO^{-(2\beta-1)}$ and even in the largest scaling-invariant space $\dot{B}^{-(2\beta-1)}_{\infty,\infty}$.

Nevertheless, ill-posedness will also arise in the critical class or even subcritical space. For the largest critical space $\dot{B}^{-1}_{\infty,\infty}$, Bourgain-Pavlovi\'c \cite{Bourgain} showed that norm inflation is possible in 3D Navier-Stokes equations. In other words, the solutions with arbitrarily small data in $\dot{B}^{-1}_{\infty,\infty}$ can grow arbitrarily large in a short time. On the other hand, Germain \cite{33} proved that the data-to-solution map is not $C^2$ in the spaces $B^{-1}_{\infty,q}$ for $q>2$ and Yoneda \cite{63} showed that ill-posedness holds in this class. Later in \cite{60}, Wang showed that norm inflation from small data can occur even in the spaces $B^{-1}_{\infty,q}$ for $q\in [1,2]$, which are continuously embedded in $BMO^{-1}$. Palasek \cite{normgrowthBMO} constructed a family of smooth initial data bounded in $BMO^{-1}$ whose corresponding solutions exhibit arbitrarily large norm growth.
Cheskidov-Dai \cite{dai14} established the phenomenon of norm inflation with small data in subcritical space $\dot{B}^{-\beta}_{\infty,\infty}$ for the generalized Navier–Stokes equations with dissipation exponent $\beta>1$.

For the non-uniqueness results, Coiculescu-Palasek \cite{Coiculescu2025} first constructed two distinct global solutions of 3D Navier-Stokes equations with identical initial data in critical space $BMO^{-1}$. Subsequently, Miao-Nie-Ye \cite{miao2026nonuniquenesssmoothsolutionsnavierstokes} established an analogous result in the two-dimensional case. See also \cite{daimimi1,daimimi2,5dMHD,2dMHD}.

We also refer to another programme by Jia and \v{S}ver\'ak \cite{jia2,jia1}. They demonstrated that non-uniqueness of Leray–Hopf weak solutions is valid, provided that a suitable spectral condition holds for the linearized Navier–Stokes operator. Building on the instability framework of \cite{vishik1,vishik2}, Albritton-Bru\'e-Colombo \cite{forcedns} proved non-uniqueness in the Leray-Hopf weak solutions for the forced 3D Navier-Stokes equations. See also the related work \cite{abchypodissipativens} for the analogous result in the setting of the two-dimensional hypodissipative Navier--Stokes equations.

\subsection{Main result}
 Our first result concerns the non-uniqueness of solutions to the hyperdissipative Navier–Stokes equations arising from subcritical initial data.

\begin{theo}[Non-uniqueness]\label{maintheo1}
    Let $d\geq2$, $\beta>1$ and $\alpha>0$. For any $\varepsilon>0$ and $0<p<\frac{2\beta}{\beta+\alpha}$, there exist divergence-free initial data $V^0\in B^{-\beta-\alpha}_{\infty,1}(\mathbb{T}^d)$ such that 
    $\|V^0\|_{B^{-\beta-\alpha}_{\infty,1}(\mathbb{T}^d)}<\varepsilon$ and
    the hyperdissipative Navier-Stokes equation (\ref{e:NSf}) admits two distinct solutions
    \begin{align*}
        v^{(1)},v^{(2)}\in &C^\infty_{t,x}((0,1]\times\mathbb{T}^d)\cap C^0([0,1];B^{-\beta-\alpha}_{\infty,1}(\mathbb{T}^d))\\
        & \cap L^{p}([0,1];L^\infty(\mathbb{T}^d)).
    \end{align*}
\end{theo}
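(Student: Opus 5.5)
We follow the strategy of Coiculescu--Palasek \cite{Coiculescu2025}, adapted to the hyperdissipative scaling \eqref{scale}. The plan is to build, by an iterative scheme on a sequence of shrinking time intervals, a smooth solution $v^{(1)}$ on $(0,1]\times\mathbb{T}^d$ whose initial trace $V^0$ is small in $B^{-\beta-\alpha}_{\infty,1}$. We then take $v^{(2)}$ to be a second solution with the same data, for instance one obtained by flipping the sign or phase of the highest-frequency building blocks; both constructions are run in parallel so that they share the initial trace. Concretely, we choose frequencies $\lambda_q\to\infty$ super-exponentially and times $t_q\downarrow0$, and decompose
\[
v=\sum_{q\ge0} w_q,
\]
where each $w_q$ is a divergence-free shear-type flow (for example $w_q=a_q\,\xi\sin(\lambda_q k\cdot x)$ with $k\perp\xi$) living at frequency $\lambda_q$, activated near time $t_q$, and carrying amplitude $a_q$. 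A single shear flow is an exact solution of the linear problem and has zero self-interaction, so on $[t_{q+1},t_q]$ the heat-type flow $e^{-t(-\Delta)^\beta}$ damps it by the factor $e^{-t\lambda_q^{2\beta}}$. We therefore pick $t_q\approx\lambda_q^{-2\beta}\log(\cdot)$.

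The second step is to fix the scaling of the amplitudes. Going backwards in time, the data component at frequency $\lambda_q$ has size roughly $a_q e^{-c}$, so that
\[
\|V^0\|_{B^{-\beta-\alpha}_{\infty,1}}\sim\sum_q\lambda_q^{-\beta-\alpha}a_q .
\]
Meanwhile the nonlinear interaction between consecutive layers must be strong enough to transfer energy, or to generate the next layer via a low-frequency forcing. The natural requirement is $a_q a_{q+1}\lambda_q\, t_q\gtrsim a_{q-1}$, or more precisely the condition that the transfer time $\sim\lambda_q^{-2\beta}$ is comparable to the nonlinear time. Choosing $a_q=\lambda_q^{\beta+\alpha/2}$ (up to logarithmic factors) makes the Besov series summable and arbitrarily small once $\lambda_0$ is large. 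On the other hand, $\int_0^1\|v\|_{L^\infty}^p\,dt\sim\sum_q a_q^p t_q\sim\sum_q\lambda_q^{p(\beta+\alpha)-2\beta}$, which is finite exactly when $p<\frac{2\beta}{\beta+\alpha}$, matching the threshold in the statement. We also obtain continuity in $C^0([0,1];B^{-\beta-\alpha}_{\infty,1})$ from the same summability, since high modes decay in that norm.

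The third step is to handle the error. We write the true solution as $v=\bar v+\phi$, where $\bar v$ is the explicit ansatz and $\phi$ solves the perturbed equation with forcing equal to the residual $R=\partial_t\bar v+\operatorname{div}(\bar v\otimes\bar v)+(-\Delta)^\beta\bar v+\nabla p$. The residual consists of the cross interactions $w_q\otimes w_{q'}$ with $q\ne q'$, which are not shear-aligned. We would use frequency separation, $\lambda_{q+1}\gg\lambda_q$, together with the smoothing of $e^{-t(-\Delta)^\beta}$, to show that $\phi$ is small in a weighted space such as $\sup_t t^{\theta}\|\phi(t)\|_{L^\infty}$, via a fixed point argument in the mild formulation. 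Because $\beta>1$, the Duhamel kernel gains $t^{-1/(2\beta)}$ per derivative, which is integrable, so the fixed point closes in subcritical norms on each dyadic window. Smoothness on $(0,1]$ then follows from parabolic regularity, since all the pieces are smooth away from $t=0$.

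The main obstacle is the non-uniqueness mechanism itself: producing two solutions whose difference is nonzero for $t>0$ but whose traces coincide at $t=0$. We would try to arrange that the difference is driven entirely by an instability that is seeded arbitrarily late. The idea is to make the two ansatzes agree in all layers $q\ge Q$ but differ at a low layer, with that layer generated nonlinearly from the higher ones by two different "inverse cascade" choices, namely opposite phases of a resonant triad that produces the same net data. The delicate estimate is showing that the feedback from these triads dominates the error $\phi$ uniformly in $q$. Doing so requires careful bookkeeping of the logarithmic losses in $t_q$ and of the gap condition $\lambda_{q+1}\ge\lambda_q^{b}$, and it is exactly here that the strict inequality $\alpha>0$ is used.
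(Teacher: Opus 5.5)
Your proposal has a genuine gap exactly where you locate ``the main obstacle'': it contains no mechanism that can produce two different solutions from the same data. With single shear flows you have $\operatorname{div}(w_q\otimes w_q)=0$, and the cross terms $w_q\otimes w_{q'}$, $q<q'$, oscillate at frequency $\sim\lambda_{q'}$. So the nonlinearity never feeds energy into a lower layer. Your ansatz is then just a decomposition of the fractional heat flow $e^{-t(-\Delta)^\beta}V^0$, which is determined by the data, and the corrector is the unique fixed point of a contraction in a small ball. Two such constructions with the same $V^0$ therefore coincide.

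Your fallback ideas do not fix this. An infinite sequence has no ``highest-frequency'' block, and flipping the sign of any $w_q$ changes $V^0$ by $2w_q(0)$. And if the two ansatzes agree in all layers $q\ge Q$ as functions of time, those layers exert identical forcing on the lower ones, so nothing can make the low layers differ.

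The missing idea is the paper's two-profile inverse cascade. One needs blocks whose self-interaction has a prescribed low-frequency part: $v_k^0\approx N_k^{\beta+\alpha}\sum_{\xi\in\Lambda}a_{\xi,k}e^{2\pi iN_k\xi\cdot x}\bar\xi$ with $a_{\xi,k}=a_\xi\bigl(\mathrm{Id}+S(\Delta^{-1}v_{k-1}^0)/N_k^{2\alpha}\bigr)$ from the geometric lemma. Then, modulo gradients and high-frequency terms, $\operatorname{div}(v_k^p\otimes v_k^p)=2(2\pi N_k)^{2\beta}e^{-2(2\pi N_k)^{2\beta}t}v_{k-1}^0$. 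This is where $\alpha>0$ enters: one needs $N_{k-1}^{\beta+\alpha-1}\ll N_k^{2\alpha}$ to keep the argument of $a_\xi$ in $B_{1/7}(\mathrm{Id})$.

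Each layer then has two time profiles with the same value $v_k^0$ at $t=0$:
\begin{itemize}
\item the heat-dominated profile $v_k=v_k^0e^{-(2\pi N_k)^{2\beta}t}$;
\item the cascade-dominated profile $\bar v_k=v_k^0e^{-2(2\pi N_{k+1})^{2\beta}t}$, whose time derivative exactly cancels the low-frequency self-interaction of an active $v_{k+1}$.
\end{itemize}
Alternating parities, $v^{(1)}=\sum_{k\text{ even}}v_k+\sum_{k\text{ odd}}\bar v_k$ and $v^{(2)}=\sum_{k\text{ odd}}v_k+\sum_{k\text{ even}}\bar v_k$, gives two approximate solutions with identical data that differ in every layer. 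At $t_0=N_0^{-2\beta}$, the surviving $v_0$ against the drained $\bar v_0$ yields a difference $\gtrsim N_0^{\beta+\alpha}$. This beats the correctors, which are bounded by $\eta t_0^{-\tau}=\eta N_0^{2\beta\tau}\le N_0^{\beta-\alpha}$.

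Note also that such blocks are not eigenfunctions of $(-\Delta)^\beta$. So besides your cross terms there is a commutator error $[a_{\xi,k},(-\Delta)^\beta]$. The paper writes it as $\Delta^{[\beta]}\widetilde F$, because in divergence form it is not subcritical enough when $\beta\ge 2$, and controls it through $\nabla^\kappa\widetilde F$ in the Duhamel estimate.
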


\begin{rema}
    When $\alpha<\beta-1$, $B^{-\beta-\alpha}_{\infty,1}(\mathbb{T}^d)$ lies in the subcritical regime. To our best knowledge, this provides the first non-uniqueness result for the Navier–Stokes equations in subcritical spaces.
    When $\alpha<\beta-1$ and $p\in[\frac{2\beta}{2\beta-1},\frac{2\beta}{\beta+\alpha})$, the space $L^{p}([0,1];L^\infty(\mathbb{T}^d))$ is subcritical and lies below the classical Ladyzhenskaya–Prodi–Serrin threshold. Nevertheless, the initial data belong to the rougher space $B^{-\beta-\alpha}_{\infty,1}(\mathbb{T}^d)$ rather than the energy space $L^2(\mathbb{T}^d)$. Therefore, establishing a non-uniqueness result in this setting does not contradict the weak–strong uniqueness theorem.
\end{rema}

The second result establishes the phenomenon of norm inflation in the context of the hyperdissipative Navier–Stokes equations.
\begin{theo}[Norm inflation]\label{maintheo2}
    Let $d\geq2$, $\beta>1$, $\alpha>0$ and $s\in\mathbb{R}$. For any $\delta>0$, there exists a solution $v$ to the hyperdissipative Navier-Stokes equation (\ref{e:NSf}) and $0<t<\delta$, such that
    \begin{gather*}
        \|v(0)\|_{B^{-\beta-\alpha}_{\infty,1}(\mathbb{T}^d)}<\delta,
    \end{gather*}
    with
    \begin{gather*}
        \|v(t)\|_{B^{-s}_{\infty,\infty}(\mathbb{T}^d)}>\delta^{-1}.
    \end{gather*}
\end{theo}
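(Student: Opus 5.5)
Theorem \ref{maintheo2} will be obtained as a byproduct of the construction behind Theorem \ref{maintheo1}, which we build backward in time in the spirit of Coiculescu--Palasek \cite{Coiculescu2025}. We fix the target solution on $(0,1]$ and realize it as a limit of smooth solutions $v_n$ of \eqref{e:NSf} on $[t_{n+1},1]$, with $t_n\downarrow 0$ geometrically.

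\textbf{Step 1: the building blocks.} At each step we add a high-frequency shear or Beltrami-type perturbation $w_{n}$ with frequency $\lambda_n$ (super-exponentially growing) and amplitude $a_n$, supported in time near $[t_{n+1},t_n]$. We choose the profile so that its self-interaction $\operatorname{div}(w_n\otimes w_n)$ is a gradient, or so that it cancels the error inherited from the previous stage. The heat flow $e^{-t(-\Delta)^\beta}$ damps $w_n$ by the factor $e^{-(t_n-t_{n+1})\lambda_n^{2\beta}}$. Read backward, this means the data at time $t_{n+1}$ must have amplitude of order $a_n$ at frequency $\lambda_n$, while the solution near time $t_n$ is essentially decayed.

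\textbf{Step 2: inflation from small data.} Norm inflation does not come from the amplitude growing forward in time. Instead, the low-frequency mode generated by the quadratic interaction of two frequency-$\lambda_n$ waves (frequencies $\lambda_n e_1$ and $\lambda_n e_1+\mu e_2$ give an output at frequency $\mu$) is of size $a_n^2\lambda_n\cdot\tau_n$ over the time span $\tau_n\sim\lambda_n^{-2\beta}$. This follows the Bourgain--Pavlovi\'c and Cheskidov--Dai \cite{dai14} mechanism.

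\textbf{Step 3: choice of parameters.} We take the initial datum as a finite or lacunary sum $V^0=\sum a_n\cos(\lambda_n x\cdot k_n)\,\cdot$, which has
\begin{gather*}
\|V^0\|_{B^{-\beta-\alpha}_{\infty,1}}\sim\sum a_n\lambda_n^{-\beta-\alpha}.
\end{gather*}
Choosing $a_n=\lambda_n^{\beta+\alpha/2}$ makes this small, while the generated low mode has size
\begin{gather*}
a_n^2\lambda_n^{1-2\beta}=\lambda_n^{1+\alpha}\to\infty.
\end{gather*}
Since this low mode sits at a fixed frequency $\mu$, its $\dot B^{-s}_{\infty,\infty}$ norm is comparable to $\mu^{-s}$ times its amplitude for every $s$, which gives inflation in every Besov norm. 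The forcing time $\tau_n\sim\lambda_n^{-2\beta}$ is smaller than $\delta$.

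\textbf{Step 4: controlling the remainder.} We write the solution as $v=e^{-t(-\Delta)^\beta}V^0+v_1+y$, where $v_1$ is the second Picard iterate, and show that the higher-order remainder $y$ is smaller than $v_1$ at the low frequency. This is the step we expect to be the main obstacle. Because the amplitudes $a_n\gg1$, the standard Picard expansion only converges on the short interval where $a_n\lambda_n t\lesssim$ something small, that is $t\ll a_n^{-1}\lambda_n^{-1}=\lambda_n^{-\beta-1-\alpha/2}$. This is shorter than $\lambda_n^{-2\beta}$ when $\alpha/2<\beta-1$.

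To handle this, we would first evaluate at a time $t\sim\lambda_n^{-2\beta}$ and use directly the smooth, exact solutions from the convex-integration construction of Theorem \ref{maintheo1}. There, the solution on $(0,1]$ is an exact smooth solution whose low-frequency part is prescribed by design: the construction transfers energy to the low mode at each stage, with the intermittent building blocks supplying the quadratic cancellations. We then read off the inflation at the stage $n$ chosen with $t_n<\delta$.

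Under that route, Theorem \ref{maintheo2} reduces to checking two things. First, the data estimate from Theorem \ref{maintheo1} with $\varepsilon=\delta$. Second, the requirement that the prescribed low-frequency profile at time $t_n$ can be chosen with amplitude larger than $\delta^{-1}\mu^{s}$. Since the construction allows an arbitrary smooth target at the final time, this amounts to inserting a large low-mode target into the iteration.
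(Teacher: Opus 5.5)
\textbf{Verdict: there is a genuine gap, in Step 4.} That step is where the entire proof has to happen, and the proposal does not carry it out.

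\textbf{Step 4: the obstruction you cite is reversed.} You abandon the perturbative expansion because of an inequality read backwards. In fact $a_n^{-1}\lambda_n^{-1}=\lambda_n^{-\beta-1-\alpha/2}$ is \emph{longer} than $\lambda_n^{-2\beta}$ precisely when $\alpha/2<\beta-1$. Equivalently, on $[0,T]$ with $T\sim\lambda_n^{-2\beta}$, the contraction constant of the mild formulation is $a_nT^{1-\frac{1}{2\beta}}\sim\lambda_n^{1-\beta+\alpha/2}\ll1$. A wave of amplitude $\lambda^{\beta+\alpha'}$ with $\beta+\alpha'<2\beta-1$ is therefore perturbative on its own dissipation time scale. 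This subcriticality is exactly what the paper exploits.

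\textbf{Step 4: the replacement you invoke does not exist.} The construction behind Theorem~\ref{maintheo1} is an explicit forward-in-time ansatz, not a backward iteration, and it has no freedom to prescribe a target at positive time. Its low modes $v_k^0$ are already contained in $V^0$. The self-interaction of $v_{k+1}$, built with coefficients $a_\xi(\mathrm{Id}+S(\Delta^{-1}v^0_{k-1})/N_k^{2\alpha})$, \emph{drains} $\bar v_k$ rather than creating anything. Nothing there produces a large low mode absent from the datum. So nothing in your proposal shows that an exact solution at some $t<\delta$ is close to the mode you want.

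\textbf{Two smaller points in Steps 2--3.} First, the low-frequency output of $\operatorname{div}(w\otimes w)$ carries the output frequency $\mu$, not $\lambda_n$, since the divergence acts on the frequency-$\mu$ part of the product. This gives $\mu\lambda_n^{\alpha}$ rather than $\lambda_n^{1+\alpha}$, which still diverges, so it is harmless. Second, in two dimensions your choice $\lambda_ne_1$, $\lambda_ne_1+\mu e_2$ makes the difference frequency parallel to the polarization $e_2$. The leading low-frequency forcing is then a gradient and $\mathbb{P}$ annihilates it. The difference frequency must be neither parallel nor perpendicular to the polarization.

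\textbf{How the paper supplies the remainder estimate.} It builds the created mode into the ansatz, $U=U_1+U_2$:
$U_1=\Phi_1(1-e^{-2(2\pi\lambda_q)^{2\beta}t})$, where $\Phi_1$ is a shear of amplitude $\lambda^{s+1}$ at frequency $\lambda=\delta^{-1}$;
$U_2$ is a heat-dominated wave at frequency $\lambda_q$ with amplitude $\lambda_q^{\beta+\alpha}$ and coefficients $\bar a_\xi=a_\xi(\mathrm{Id}-S(\Delta^{-1}\Phi_1)/\lambda_q^{2\alpha})$ from the geometric lemma.
The sign flip relative to Theorem~\ref{maintheo1} makes the low-frequency part of $\mathbb{P}\operatorname{div}(U_2^p\otimes U_2^p)$ cancel $(\partial_t+(-\Delta)^\beta)U_1$ exactly, so the cascade creates the mode instead of draining it. The residuals $G,\widetilde G$ are then small in the weighted norms. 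A correction $\rho$ with $\rho(0)=0$ is obtained by contraction on $(0,t_q]$, $t_q=\lambda_q^{-2\beta}$; the large $U_1$ enters only through $\lambda^{s+1}t_q^{1-\frac{1}{2\beta}}\ll1$. At $t_q$, $U_1(t_q)$ has size $\gtrsim\lambda$ in $B^{-s}_{\infty,\infty}$ and dominates.

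\textbf{The Picard route you discarded likely closes as well.} Take amplitudes $a_n=\lambda_n^{\beta+\alpha'}$ with $0<\alpha'<\min\{\alpha,\beta-1\}$, so the datum is of size $\lambda_n^{\alpha'-\alpha}$ in $B^{-\beta-\alpha}_{\infty,1}$.
Solve $u=u_1+B(u,u)$ by contraction in $L^\infty_{t,x}$ on $[0,T]$. This gives $\|u-u_1\|_{L^\infty}\lesssim a_n\lambda_n^{1-\beta+\alpha'}$.
Since $e^{-(t-s)(-\Delta)^\beta}\mathbb{P}\operatorname{div}$ is bounded from $L^\infty$ to $B^{-1}_{\infty,\infty}$, the remainder $y=u-u_1-B(u_1,u_1)$ obeys $\|y(T)\|_{B^{-1}_{\infty,\infty}}\lesssim Ta_n^2\lambda_n^{1-\beta+\alpha'}$.
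That is smaller by the factor $\lambda_n^{1-\beta+\alpha'}$ than the created low mode, whose $B^{-1}_{\infty,\infty}$ size is $\sim Ta_n^2$.
Moreover, $u_1$ and the high-frequency part of $B(u_1,u_1)$ vanish on the dyadic block containing the low mode.

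Either way, this estimate is the content your proposal lacks.
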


To demonstrate the sharpness of Theorem \ref{maintheo1} and Theorem \ref{maintheo2}, we now establish the local well-posedness of the generalized Navier–Stokes equations.
\begin{theo}[Local well-posedness]\label{maintheo3}
    Let $ \beta > \frac{1}{2}$ and $\alpha<\min\{0,\beta-1\}$. For any divergence-free initial data $ v_0 \in \dot{B}_{\infty,\infty}^{-\beta-\alpha}(\mathbb{T}^d)$, there exists $T(v_0)>0$ such that the generalized Navier-Stokes equation (\ref{e:NSf}) admits a unique solution $ v \in L^\infty\big((0,T(v_0)], \dot{B}_{\infty,\infty}^{-\beta-\alpha}(\mathbb{T}^d)\big)$ satisfying
\[
\sup_{t\in (0,T(v_0)]} t^{\frac{\beta+\alpha}{2\beta}} \|v(t)\|_{L^\infty(\mathbb{T}^d)} < \infty.
\]
\end{theo}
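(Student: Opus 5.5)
We will prove Theorem \ref{maintheo3} by a Kato-type fixed point argument for the mild formulation
\[
v(t)=e^{-t(-\Delta)^\beta}v_0-\int_0^t e^{-(t-s)(-\Delta)^\beta}\mathbb{P}\operatorname{div}(v\otimes v)(s)\,ds=:e^{-t(-\Delta)^\beta}v_0+B(v,v)(t),
\]
where $\mathbb{P}$ is the Leray projector. Set $\gamma:=\frac{\beta+\alpha}{2\beta}$. Since $\alpha<\beta-1$, we have $\gamma<1-\frac1{2\beta}$. Since $\beta>\frac12$ and $\alpha<0$, the data space has negative regularity $-\beta-\alpha$ only when $\beta+\alpha>0$. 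When $\beta+\alpha\le0$ the data are bounded functions, and the argument only simplifies, so we focus on $\gamma\in(0,1-\frac{1}{2\beta})$. We work with the norm
\[
\|v\|_{X_T}:=\sup_{0<t\le T}\|v(t)\|_{\dot B^{-\beta-\alpha}_{\infty,\infty}}+\sup_{0<t\le T}t^{\gamma}\|v(t)\|_{L^\infty},
\]
restricted to mean-zero divergence-free fields on $\mathbb{T}^d$. The mean is preserved by the equation, and the homogeneous norm only sees nonzero modes, so we may take the zero mode to be constant in time.

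\textbf{Step 1 (linear estimates).} Using the Littlewood–Paley characterization and the frequency-localized bound $\|e^{-t(-\Delta)^\beta}\dot\Delta_j f\|_{L^\infty}\lesssim e^{-ct2^{2\beta j}}\|\dot\Delta_jf\|_{L^\infty}$, we get
\[
\|e^{-t(-\Delta)^\beta}v_0\|_{\dot B^{-\beta-\alpha}_{\infty,\infty}}\lesssim\|v_0\|_{\dot B^{-\beta-\alpha}_{\infty,\infty}},\qquad t^{\gamma}\|e^{-t(-\Delta)^\beta}v_0\|_{L^\infty}\lesssim\|v_0\|_{\dot B^{-\beta-\alpha}_{\infty,\infty}}.
\]
The second bound comes from summing $2^{j(\beta+\alpha)}e^{-ct2^{2\beta j}}$ over $j$, which is $\lesssim t^{-\gamma}$ because $\beta+\alpha>0$; on the torus only $j\ge0$ occurs. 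We will also use the smoothing bound $\|e^{-t(-\Delta)^\beta}\mathbb{P}\operatorname{div}F\|_{L^\infty}\lesssim t^{-\frac1{2\beta}}\|F\|_{L^\infty}$. It follows from the same dyadic argument: $\mathbb{P}\operatorname{div}$ is a Fourier multiplier of order $1$, and $\mathbb{P}$ is bounded on each dyadic block in $L^\infty$. The low-frequency sum is harmless on $\mathbb{T}^d$, and the high-frequency sum $\sum_j 2^j e^{-ct2^{2\beta j}}$ gives the power $t^{-1/(2\beta)}$.

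\textbf{Step 2 (bilinear estimates).} For the $L^\infty$ weight,
\[
t^\gamma\|B(u,v)(t)\|_{L^\infty}\lesssim t^\gamma\int_0^t(t-s)^{-\frac1{2\beta}}s^{-2\gamma}\,ds\,\|u\|_{X_T}\|v\|_{X_T}=C\,t^{1-\frac1{2\beta}-\gamma}\|u\|_{X_T}\|v\|_{X_T}.
\]
This integral converges because $\frac{1}{2\beta}<1$ and $2\gamma<1$. The latter holds because $\beta+\alpha<\beta$, which uses $\alpha<0$. The exponent $1-\frac1{2\beta}-\gamma=\frac{\beta-1-\alpha}{2\beta}$ is positive exactly because $\alpha<\beta-1$; this is the subcriticality that gives a small factor $T^{\frac{\beta-1-\alpha}{2\beta}}$.

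For the Besov part, on each block we have $\|\dot\Delta_j B(u,v)(t)\|_{L^\infty}\lesssim\int_0^t2^je^{-c(t-s)2^{2\beta j}}s^{-2\gamma}\,ds\,\|u\|\|v\|$. Multiplying by $2^{-j(\beta+\alpha)}$, we split the integral at the time scale $2^{-2\beta j}$. For $j$ with $2^{-2\beta j}\ge t$, the factor $e^{-c(t-s)2^{2\beta j}}\le1$ gives $2^{j(1-\beta-\alpha)}t^{1-2\gamma}$. Since $t\le2^{-2\beta j}$, this is bounded by $t^{\frac{\beta-1-\alpha}{2\beta}}$ when $1-\beta-\alpha<0$; if instead $1-\beta-\alpha\ge0$, the sum is bounded by $t^{\frac{\beta-1-\alpha}{2\beta}}$ directly, using $j\ge0$ and the same positivity. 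For $j$ with $2^{-2\beta j}<t$, the exponential gives $2^{j(1-\beta-\alpha-2\beta)}t^{-2\gamma}\le t^{-2\gamma+\frac{\beta+\alpha-1}{2\beta}+1}=t^{\frac{\beta-1-\alpha}{2\beta}}$. Hence
\[
\|B(u,v)\|_{X_T}\le C\,T^{\frac{\beta-1-\alpha}{2\beta}}\|u\|_{X_T}\|v\|_{X_T}.
\]

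\textbf{Step 3 (contraction and uniqueness).} With $R=2C\|v_0\|_{\dot B^{-\beta-\alpha}_{\infty,\infty}}$, we choose $T(v_0)$ such that $4CRT^{\frac{\beta-1-\alpha}{2\beta}}<1$. The map $v\mapsto e^{-t(-\Delta)^\beta}v_0+B(v,v)$ is then a contraction on the ball of radius $R$ in $X_T$, which gives existence. Uniqueness in the stated class follows from the same bilinear estimate: the difference $w$ of two solutions satisfies $\|w\|_{X_{T'}}\le C(T')^{\frac{\beta-1-\alpha}{2\beta}}(\|u\|_{X_T}+\|v\|_{X_T})\|w\|_{X_{T'}}$, so $w=0$ on a short interval $[0,T']$. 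Because the gain is a power of time rather than a smallness of the data, there is no need to choose $v_0$ small, and iterating in time gives $w\equiv0$ on $(0,T]$.

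\textbf{Main obstacle.} We expect the main obstacle to be the bookkeeping in the Besov part of the bilinear estimate, including the case distinction on the sign of $1-\beta-\alpha$. A second point is to check that both mild solutions in the uniqueness statement actually satisfy the Duhamel formula. For this we would argue that any distributional solution in the stated class satisfies the mild formulation, since the time singularity $t^{-2\gamma}$ with $2\gamma<1$ is integrable.
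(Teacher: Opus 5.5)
Your proposal follows the paper's proof. It is a Kato-type fixed point for the mild formulation in the space normed by $\sup_{t}t^{\frac{\beta+\alpha}{2\beta}}\|v(t)\|_{L^\infty}$. It uses the smoothing bound $\|e^{-t(-\Delta)^\beta}\mathbb{P}\operatorname{div}F\|_{L^\infty}\lesssim t^{-\frac{1}{2\beta}}\|F\|_{L^\infty}$ and the Beta integral $I(\frac{1}{2\beta},\frac{\beta+\alpha}{\beta})$ to gain the factor $T^{\frac{\beta-1-\alpha}{2\beta}}$; the paper's explicit choice of $T(u_0)$ encodes exactly this factor. Adding $\sup_t\|v(t)\|_{\dot B^{-\beta-\alpha}_{\infty,\infty}}$ to the norm, and proving uniqueness without any smallness assumption, both go beyond the paper. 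Its proof only produces a fixed point in a small ball of the weighted $L^\infty$ space, and checks neither the Besov bound nor uniqueness in the full class.

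Two points need correcting.

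\textbf{The low-frequency Besov cases are swapped.} When $1-\beta-\alpha\ge 0$, it is the constraint $2^{j}\le t^{-\frac{1}{2\beta}}$ that gives $2^{j(1-\beta-\alpha)}t^{1-2\gamma}\le t^{\frac{\beta-1-\alpha}{2\beta}}$. When $1-\beta-\alpha<0$ (allowed, e.g.\ $\beta=3$, $\alpha=-\frac12$), that constraint bounds $2^{j(1-\beta-\alpha)}$ only from below. Using $j\ge-1$ you get $t^{1-2\gamma}=t^{-\frac{\alpha}{\beta}}$. For $j=0$ the time integral in your block bound really is of that size, and this is larger than $t^{\frac{\beta-1-\alpha}{2\beta}}$ for small $t$. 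So the bilinear estimate should read $\|B(u,v)\|_{X_T}\le C\,T^{\theta}\|u\|_{X_T}\|v\|_{X_T}$ with $\theta=\min\{\frac{\beta-1-\alpha}{2\beta},-\frac{\alpha}{\beta}\}$. This is still positive because $\alpha<0$, so your contraction and uniqueness arguments go through unchanged.

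\textbf{The case $\beta+\alpha\le 0$ does not ``only simplify''.} If $\beta+\alpha<0$, the required bound forces $\|v(t)\|_{L^\infty}\lesssim t^{-\frac{\beta+\alpha}{2\beta}}\to0$. But $\|v(t)-e^{-t(-\Delta)^\beta}v_0\|_{L^\infty}\to0$ for any mild solution in the class, so necessarily $v_0=0$. If $\beta+\alpha=0$, the bound forces $v_0\in L^\infty$. Yet the field whose second component is $\sum_{k\ge1}\cos(2\pi 2^k x_1)$, with all other components zero, lies in $\dot B^{0}_{\infty,\infty}$ but not in $L^\infty$. Both your argument and the paper's rest on the linear bound $t^{\frac{\beta+\alpha}{2\beta}}\|e^{-t(-\Delta)^\beta}v_0\|_{L^\infty}\lesssim\|v_0\|_{\dot B^{-\beta-\alpha}_{\infty,\infty}}$, which requires $\beta+\alpha>0$. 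You should state that restriction explicitly rather than claim the remaining case is easier.
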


\begin{theo}[Well-posedness with small data]\label{maintheo4}
    Let $ \beta > \frac{1}{2}$ and $\alpha\leq\beta-1$ with $\alpha<0$, and let $T>0$. There exists $\varepsilon_\beta > 0$ such that for any divergence-free initial data $ v_0 \in \dot{B}_{\infty,\infty}^{-\beta-\alpha}(\mathbb{T}^d)$ satisfying
\[
\|v_0\|_{\dot{B}_{\infty,\infty}^{-\beta-\alpha}(\mathbb{T}^d)} \le \varepsilon_\beta,
\]
the generalized Navier-Stokes equation (\ref{e:NSf}) admits a unique solution $ v \in L^\infty\big((0,T], \dot{B}_{\infty,\infty}^{-\beta-\alpha}(\mathbb{T}^d)\big)$ such that
\[
\sup_{t\in (0,T]} t^{\frac{\beta+\alpha}{2\beta}} \|v(t)\|_{L^\infty(\mathbb{T}^d)} < \infty.
\]
Moreover, if $\beta\in(\frac{1}{2},1)$ and $\alpha=\beta-1$, the existence time $T$ can be taken to be infinite.
\end{theo}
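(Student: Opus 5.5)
The plan is a Kato-type fixed point for the mild formulation
\[
v(t)=e^{-t(-\Delta)^\beta}v_0-\int_0^t e^{-(t-s)(-\Delta)^\beta}\mathbb{P}\operatorname{div}(v\otimes v)(s)\,ds=:e^{-t(-\Delta)^\beta}v_0+B(v,v)(t),
\]
carried out in the space $X_T$ of functions with finite norm
\[
\|v\|_{X_T}=\sup_{t\in(0,T]}\|v(t)\|_{\dot B^{-\beta-\alpha}_{\infty,\infty}}+\sup_{t\in(0,T]}t^{\frac{\beta+\alpha}{2\beta}}\|v(t)\|_{L^\infty}.
\]
Set $\gamma=\beta+\alpha$. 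The hypotheses give $\gamma>0$, $\gamma\le 2\beta-1$ (strict in the local theorem) and $\gamma<\beta$.

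\textbf{Linear part.} On the torus, with mean-zero data, the fractional heat kernel bounds $\|\Delta_j e^{-t(-\Delta)^\beta}f\|_{L^\infty}\lesssim e^{-ct2^{2\beta j}}\|\Delta_jf\|_{L^\infty}$ give
\[
\|e^{-t(-\Delta)^\beta}v_0\|_{\dot B^{-\gamma}_{\infty,\infty}}\lesssim\|v_0\|_{\dot B^{-\gamma}_{\infty,\infty}}.
\]
Summing over $j$ with $\gamma>0$ gives
\[
t^{\frac{\gamma}{2\beta}}\|e^{-t(-\Delta)^\beta}v_0\|_{L^\infty}\lesssim\|v_0\|_{\dot B^{-\gamma}_{\infty,\infty}}.
\]
For the smallness needed in the local theorem, I would split $v_0$ into low and high frequencies. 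The low-frequency piece is bounded in $L^\infty$, so its weighted norm is at most $CT^{\gamma/2\beta}$, which is small as $T\to0$. The high-frequency piece $\sum_{j>N}$ has the same $\dot B^{-\gamma}$ norm. So I cannot rely on a vanishing-norm argument, and instead exploit subcriticality directly in the bilinear estimate.

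\textbf{Bilinear estimate (main step).} I would use $\|e^{-\tau(-\Delta)^\beta}\mathbb{P}\operatorname{div}F\|_{L^\infty}\lesssim\tau^{-\frac1{2\beta}}\|F\|_{L^\infty}$ (the kernel is $L^1$ with the stated scaling). This gives
\[
t^{\frac{\gamma}{2\beta}}\|B(u,v)(t)\|_{L^\infty}\lesssim t^{\frac{\gamma}{2\beta}}\int_0^t(t-s)^{-\frac1{2\beta}}s^{-\frac{\gamma}{\beta}}\,ds\,\|u\|_{X}\|v\|_{X}=C\,t^{\frac{2\beta-1-\gamma}{2\beta}}\|u\|_{X}\|v\|_{X}.
\]
The integral converges because $\gamma<\beta$ and $\beta>\tfrac12$.

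For the Besov part, I would bound each block: $\|\Delta_jB(u,v)(t)\|_{L^\infty}\lesssim\int_0^t2^je^{-c(t-s)2^{2\beta j}}s^{-\gamma/\beta}\,ds\,\|u\|_X\|v\|_X$. Multiplying by $2^{-\gamma j}$ and splitting at $s=t/2$ gives the bound $t^{\frac{2\beta-1-\gamma}{2\beta}}$, uniformly in $j$. The estimate $2^{j(1-\gamma)}\min(t,2^{-2\beta j})t^{-\gamma/\beta}$ is bounded using $\gamma\le2\beta-1$.

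Hence $\|B(u,v)\|_{X_T}\le C\,T^{\frac{2\beta-1-\gamma}{2\beta}}\|u\|_{X_T}\|v\|_{X_T}$.

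\textbf{Conclusion.} In the local theorem, $\gamma<2\beta-1$, so the positive power of $T$ makes the bilinear constant small. A contraction on a ball of radius $2C\|v_0\|$ in $X_T$ works for $T(v_0)\sim\|v_0\|^{-2\beta/(2\beta-1-\gamma)}$. In the small-data theorem with fixed $T$, the factor $T^{\frac{2\beta-1-\gamma}{2\beta}}$ is just a constant, so smallness $\varepsilon_\beta$ (depending on $T$) closes the argument.

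For $\beta<1$ and $\alpha=\beta-1$, the exponent vanishes and the constant is $T$-independent, giving a global solution. There I must check the torus estimates uniformly for $t\ge1$; the exponential decay from the mean-zero condition makes large times harmless.

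Uniqueness in the stated class follows from the same bilinear bound applied to the difference of two solutions on a short interval, then iterated.

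I expect the main obstacle to be the Besov estimate for $B$ near $s=0$ with the singular weight $s^{-\gamma/\beta}$, together with its uniformity in $j$. This is where the endpoint conditions $\gamma<\beta$ and $\gamma\le 2\beta-1$ enter.
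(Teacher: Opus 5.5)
Your existence argument is essentially the paper's. It is a Kato-type contraction for the mild formulation in the weighted norm $\sup_t t^{\frac{\beta+\alpha}{2\beta}}\|v(t)\|_{L^\infty}$. The Beta-integral bilinear bound produces the factor $T^{\frac{\beta-\alpha-1}{2\beta}}$, which is a harmless constant for fixed $T$ and $T$-independent at $\alpha=\beta-1$.

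Adding $\sup_t\|v(t)\|_{\dot B^{-\beta-\alpha}_{\infty,\infty}}$ to the norm and estimating the Duhamel term block by block is a genuine improvement. The paper controls only the weighted $L^\infty$ norm, and the asserted $L^\infty_t\dot B^{-\beta-\alpha}_{\infty,\infty}$ bound does not follow from that at frequencies $2^j\ll t^{-1/(2\beta)}$. There is one minor slip: if $\gamma>1$, the supremum over $j$ sits at the lowest blocks and gives $t^{1-\gamma/\beta}$ rather than $t^{\frac{2\beta-1-\gamma}{2\beta}}$. Since $\gamma<\beta$ this is still bounded, so nothing changes.

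The step that fails is uniqueness at the endpoint $\alpha=\beta-1$. The statement allows this case for $\beta<1$, and it contains the global-in-time claim. There the bilinear estimate is scale invariant, so for the difference $w=v_1-v_2$ of two solutions with the same data you only get
\[
\|w\|_{X_\tau}\le C\bigl(\|v_1\|_{X_\tau}+\|v_2\|_{X_\tau}\bigr)\|w\|_{X_\tau},
\]
with $C$ independent of $\tau$. Shrinking the interval therefore buys nothing.

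Your constructed solution has small norm, but a competitor $v_2$ in the stated class is only known to satisfy $\|v_2\|_{X_T}<\infty$. Nothing makes $\|v_2\|_{X_\tau}$ small as $\tau\to0$: such weighted norms need not vanish at $t=0$ for $\dot B^{-(2\beta-1)}_{\infty,\infty}$ data. Already for lacunary data the free evolution has $\limsup_{t\to0}t^{\frac{2\beta-1}{2\beta}}\|e^{-t(-\Delta)^\beta}v_0\|_{L^\infty}>0$.

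So at the endpoint you obtain uniqueness only among solutions with $\|v\|_{X_T}\le 2C\|v_0\|$, i.e.\ in the contraction ball. This is also all the paper's fixed-point argument yields. Uniqueness in the full class would need a new idea or a smallness condition in the statement. For $\alpha<\beta-1$ your short-time argument, followed by ordinary $L^\infty$ uniqueness on $[\tau,T]$, is fine.

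Finally, the hypotheses do not by themselves give $\gamma=\beta+\alpha>0$, since nothing bounds $\alpha$ from below. This is a tacit assumption, shared with the paper. Without it the linear bound $t^{\frac{\gamma}{2\beta}}\|e^{-t(-\Delta)^\beta}v_0\|_{L^\infty}\lesssim\|v_0\|_{\dot B^{-\gamma}_{\infty,\infty}}$ already fails.
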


\begin{rema}
     Combing the thresholds in Theorem \ref{maintheo3} and Theorem \ref{maintheo4} with the fact that $\alpha$ is required to be positive in Theorem \ref{maintheo1}, naturally leads to a series of interesting questions:
    \begin{itemize}
        \item Is it possible to construct a non-uniqueness example with arbitrary small initial data in $\dot{B}_{\infty,\infty}^{-\beta}(\mathbb{T}^d)$ for $\beta\geq1$?
        \item Is it possible to construct a non-uniqueness example with large initial data in $\dot{B}_{\infty,\infty}^{-(2\beta-1)}(\mathbb{T}^d)$ or even in $BMO^{-(2\beta-1)}\triangleq(-\Delta)^{-\frac{2\beta-1}{2}}BMO$ for $\beta\in(\frac{1}{2},1)$?
    \end{itemize}
   
\end{rema}

Since the solutions in the lower dimensions satisfy the higher-dimensional generalized Navier-Stokes equations when zero components are added to the remaining dimensions. Moreover, the embedding $B^{a}_{\infty,\infty}\hookrightarrow B^{b}_{\infty,1}$ holds for all $a>b$. Consequently, proving Theorem \ref{maintheo1} and Theorem \ref{maintheo2} reduces to establishing the following propositions.

\begin{prop}\label{mainprop1}
    Let $d=2$, $\beta>1$ and $0<\alpha< \min\{\frac{1}{2},\beta-1\}$. For any $\varepsilon,\varepsilon'>0$ and $0<p<\frac{2\beta}{\beta+\alpha}$, there exist divergence-free initial data $V^0\in B^{-\beta-\alpha}_{\infty,\infty}(\mathbb{T}^2)$ such that 
    $\|V^0\|_{B^{-\beta-\alpha-\varepsilon}_{\infty,\infty}(\mathbb{T}^2)}<\varepsilon'$ and
    the hyperdissipative Navier-Stokes equation (\ref{e:NSf}) admits two distinct solutions
    \begin{align*}
        v^{(1)},v^{(2)}\in &C^\infty_{t,x}((0,1]\times\mathbb{T}^2)\cap C^0([0,1];B^{-\beta-\alpha-\varepsilon}_{\infty,\infty}(\mathbb{T}^2))\\
        & \cap L^{p}([0,1];L^\infty(\mathbb{T}^2)).
    \end{align*}
\end{prop}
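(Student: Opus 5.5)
The approach follows the Coiculescu--Palasek strategy for non-unique smooth solutions from rough data, adapted to the hyperdissipative scaling. Instead of an iteration producing weak solutions, we build an explicit \emph{smooth-in-$(0,1]$} approximate solution by a backward cascade of shear/Kolmogorov-type flows on $\mathbb{T}^2$, then correct it to an exact solution by a perturbative fixed point.

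\textbf{Step 1: the cascade.} Choose frequencies $\lambda_n\to\infty$ growing super-exponentially and times $t_n\downarrow 0$ with $t_n\sim\lambda_n^{-2\beta}$ up to logarithmic factors. On $[t_{n+1},t_n]$ the velocity $V$ is built from a dominant mode $a_n\sin(\lambda_n x\cdot e_n)e_n^\perp$. For a single shear this is an exact solution of the heat-type equation $\partial_t+(-\Delta)^\beta$. Two consecutive shears interact, and the interaction is quadratic. We use this nonlinear interaction (in the spirit of Bourgain--Pavlovi\'c and Cheskidov--Dai) to transfer energy from frequency $\lambda_{n+1}$ down to a low-frequency mode. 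The amplitudes are tuned so that $a_n\lambda_n^{-\beta-\alpha}\to 0$. Summing over $n$ then makes $V(0)=\sum_n a_n\sin(\lambda_n x\cdot e_n)e_n^\perp$ small in $B^{-\beta-\alpha-\varepsilon}_{\infty,\infty}$, since the $\varepsilon$ loss gives summable geometric decay. Meanwhile the solution on $[t_{n+1},t_n]$ has $\|V(t)\|_{L^\infty}\lesssim t^{-\frac{\beta+\alpha}{2\beta}}$, which lies in $L^p_tL^\infty_x$ exactly when $p<\frac{2\beta}{\beta+\alpha}$. The restriction $\alpha<\beta-1$ and the range $\alpha<\frac12$ enter when balancing the interaction size $a_na_{n+1}$ against the dissipation $\lambda_n^{2\beta}$ and the required output amplitude.

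\textbf{Step 2: two solutions.} We produce two such constructions $V^{(1)},V^{(2)}$ with the same data $V^0$. The natural way is to flip the sign (or phase) of the low-frequency output generated by the cascade, so that at $t=1$ they differ by an $O(1)$ smooth field. Alternatively, one solution is the cascade and the other is the solution issued from the trivial/mild branch; for the latter we would invoke well-posedness results after time $t_n$.

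\textbf{Step 3: exact correction.} Each $V^{(i)}$ satisfies the equation up to an error $R$ consisting of non-resonant interactions and truncation terms. We write $v=V+w$ and solve for $w$ by a contraction in a weighted space, with norm $\sup_t t^{\gamma}\|w(t)\|_{L^\infty}$ plus a time-decaying smallness. We use the heat-type kernel estimates $\|e^{-t(-\Delta)^\beta}\mathbb{P}\operatorname{div}f\|_{L^\infty}\lesssim t^{-1/(2\beta)}\|f\|_{L^\infty}$ and the linearization around $V$. Smoothness on $(0,1]$ follows from parabolic regularity, and continuity into $B^{-\beta-\alpha-\varepsilon}_{\infty,\infty}$ follows from the weighted bounds.

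\textbf{Main obstacle.} The hardest step is the stability estimate in Step 3. The linearized operator $\mathbb{P}\operatorname{div}(V\otimes w+w\otimes V)$ has a coefficient $V$ that is large, of size $t^{-\frac{\beta+\alpha}{2\beta}}$. Its Duhamel integral is only marginally controlled: we need $\int t^{-1/(2\beta)}s^{-\frac{\beta+\alpha}{2\beta}}$-type bounds to be small. These are exactly what $\beta>1$, $\alpha<\beta-1$ provide, since the exponent sum is $<1$. We must also avoid destroying the $O(1)$ difference created at $t=1$. The first attempt will be to exploit the lacunarity of the cascade, so that each interval contributes a factor $\lambda_n^{-\delta}$ making the fixed-point constant summable.
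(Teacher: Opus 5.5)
Your outer shell matches the paper: an explicit approximate solution, corrected by a contraction in $\sup_t t^{\tau}\|\cdot\|_{L^\infty}$, with $\alpha<\beta-1$ making the Duhamel exponents integrable. But the two ideas that actually produce non-uniqueness are missing, and what you propose in their place does not work.

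\textbf{The energy transfer fails as described.} A single shear $W=\sin(\lambda_n x\cdot e_n)e_n^\perp$ has $\operatorname{div}(W\otimes W)=0$. The cross term of two shears with $\lambda_n\ll\lambda_{n+1}$ lives at frequencies $\lambda_{n+1}e_{n+1}\pm\lambda_n e_n$, so no low mode is fed. Data built from unmodulated shears likewise give only a constant low-frequency stress, which has zero divergence. A low-frequency output can only come from the self-interaction of the high layer with slowly varying amplitudes. For the residual to be subcritically small, that output must reproduce the lower layer exactly in profile. The paper achieves this with the geometric lemma: $v_k^0$ oscillates at $N_k$ with amplitudes $a_{\xi,k}=a_\xi(\mathrm{Id}+S(\Delta^{-1}v^0_{k-1})/N_k^{2\alpha})$, so that $\sum_{\xi}\operatorname{div}(a_{\xi,k}^2\bar\xi\otimes\bar\xi)=v^0_{k-1}/N_k^{2\alpha}$. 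The remaining frequency-$N_k$ interactions are absorbed into pressure by the stationary-flow lemma, or inverted by $\mathcal{R}$ with gain $N_k^{-1+\mu}N_{k-1}$. This is exactly where $\alpha>0$ enters, which your sketch never explains. Since $\|S(\Delta^{-1}v^0_{k-1})\|_{L^\infty}\sim N_{k-1}^{\beta+\alpha-1}$ is large when $\beta>1$, only the factor $N_k^{-2\alpha}$ keeps the argument of $a_\xi$ inside $B_{1/7}(\mathrm{Id})$. The modulation also creates the commutator error $[(-\Delta)^\beta,a_{\xi,k}]$, which for $\beta\ge 2$ is not small in divergence form. The paper handles this by writing $v_k^0=\Delta^{[\beta]}\nabla^\perp(\cdots)$, so the error takes the form $\Delta^{[\beta]}\widetilde F$.

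\textbf{You have no mechanism giving two solutions with the same data.} This is the more serious gap. Flipping the sign of the high layer does not flip its quadratic output. The ``mild branch'' is also unavailable. For $\alpha>0$ the bilinear Duhamel term in $\sup_t t^{\frac{\beta+\alpha}{2\beta}}\|\cdot\|_{L^\infty}$ involves $\int_0^t(t-s)^{-\frac{1}{2\beta}}s^{-\frac{\beta+\alpha}{\beta}}ds$, which diverges; the paper's local theory (Theorem~\ref{maintheo3}) needs $\alpha<0$. Restarting after $t_n$ does not give a solution from $t=0$.

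The paper's key idea is that every layer admits two time profiles with the same value $v_k^0$ at $t=0$:
\begin{itemize}
\item the heat profile $v_k=v_k^0e^{-(2\pi N_k)^{2\beta}t}$;
\item the drained profile $\bar v_k=v_k^0e^{-2(2\pi N_{k+1})^{2\beta}t}$.
\end{itemize}
The drained profile is an approximate solution precisely when layer $k+1$ is in heat mode. Then $\partial_t\bar v_k$ is cancelled by the low-frequency part of $\mathbb{P}\operatorname{div}(v^p_{k+1}\otimes v^p_{k+1})$, which equals $2(2\pi N_{k+1})^{2\beta}v_k^0e^{-2(2\pi N_{k+1})^{2\beta}t}$. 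Alternating the roles by parity gives $v^{(1)}=\sum_{k\ \mathrm{even}}v_k+\sum_{k\ \mathrm{odd}}\bar v_k$ and $v^{(2)}$ with the parities swapped, both with data $V^0$. They are separated at $t_0=N_0^{-2\beta}$. There $v_0$ has size $N_0^{\beta+\alpha}e^{-(2\pi)^{2\beta}}$, while $\bar v_0$, all other layers, and the correctors are $O(N_0^{\beta})$; the correctors' bound uses $\tau<\frac{\beta-\alpha}{2\beta}$.

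Your Step 3 idea of extracting smallness from lacunarity is in the right spirit; the paper uses $\|v^{(i)}(t)\|_{L^\infty}\le\epsilon_1t^{-\frac{\beta+\alpha+\epsilon_2}{2\beta}}$. Without the two ingredients above, however, there is no approximate solution to correct and nothing that distinguishes the two branches.
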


\begin{prop}\label{mainprop2}
    Let $d=2$, $\beta>1$, $0<\alpha< \min\{\frac{1}{2},\beta-1\}$ and $s>\beta+\alpha$. For any $\delta>0$, there exists a solution $v$ to the hyperdissipative Navier-Stokes equation (\ref{e:NSf}) and $0<t<\delta$, such that
    \begin{gather*}
        \|v(0)\|_{B^{-\beta-\alpha-\varepsilon}_{\infty,\infty}(\mathbb{T}^2)}<\delta,
    \end{gather*}
    with
    \begin{gather*}
        \|v(t)\|_{B^{-s}_{\infty,\infty}(\mathbb{T}^2)}>\delta^{-1}.
    \end{gather*}
\end{prop}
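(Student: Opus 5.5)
Proposition \ref{mainprop2} will follow from the Coiculescu--Palasek type construction behind Proposition \ref{mainprop1}, applied at one generation instead of across the whole cascade. The solution is built as a superposition of high-frequency shear-type building blocks
\[
v=\sum_{q}\big(w_q+\text{correctors}\big),
\]
where each $w_q$ is concentrated at frequency $\lambda_q$. The time scale of $w_q$ is matched to the hyperdissipation $\lambda_q^{-2\beta}$. The low-frequency part generated by the nonlinear interaction $\operatorname{div}(w_q\otimes w_q)$ drives the next, lower-frequency generation. Thus energy is transferred from high to low frequencies in a short time, in the spirit of Bourgain--Pavlovi\'c and Cheskidov--Dai.

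\textbf{Step 1: initial data.} I choose $v(0)$ as a divergence-free shear flow, or a pair of crossing shears, of amplitude $A$ at frequency $\lambda$. Then
\[
\|v(0)\|_{B^{-\beta-\alpha-\varepsilon}_{\infty,\infty}}\sim A\lambda^{-\beta-\alpha-\varepsilon}.
\]
I take $A=\lambda^{\beta+\alpha}$, which is the amplitude scaling used in the construction of Proposition \ref{mainprop1}. The data norm is then $\lambda^{-\varepsilon}<\delta$ once $\lambda$ is large.

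\textbf{Step 2: low-frequency output.} Over a time $t\sim\lambda^{-2\beta}$ (which is $<\delta$), the quadratic interaction of the two crossing waves produces a low-frequency (e.g.\ $O(1)$-frequency) component of size roughly
\[
t\cdot A^2\cdot(\text{frequency})\gtrsim \lambda^{-2\beta}\lambda^{2\beta+2\alpha}=\lambda^{2\alpha}.
\]
The precise exponent depends on the geometry of the building blocks. I would read it off from the inductive estimates of Proposition \ref{mainprop1}, which presumably say that generation $q+1$ has amplitude that grows relative to generation $q$ when $\alpha>0$. This growth measured at fixed low frequency gives
\[
\|v(t)\|_{B^{-s}_{\infty,\infty}}\gtrsim\lambda^{2\alpha}\to\infty .
\]
Alternatively, I iterate the cascade over finitely many generations, so that the amplitude at frequency about $1$ exceeds $\delta^{-1}$. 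Since a frequency-$1$ component controls every $B^{-s}$ norm, the result holds for every $s$; in particular for $s>\beta+\alpha$, where the negative-regularity norm only sees low frequencies.

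\textbf{Step 3: it is an exact solution.} I write $v=\bar v+z$, where $\bar v$ is the explicit approximate solution (building blocks plus the low-frequency profile). The remainder $z$ solves a perturbed equation with a forcing error. I bound $z$ by a mild-formulation fixed point in $\sup_t t^{\frac{\beta+\alpha}{2\beta}}\|z\|_{L^\infty}$, using the heat-kernel estimates of the type that underlie Theorem \ref{maintheo3}. The goal is to show $z$ is smaller than the low-frequency output on $[0,t]$.

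\textbf{Main obstacle.} I expect the hardest step to be controlling the error and the perturbation $z$ against the large linear term. The bilinear smallness needs $\alpha<\beta-1$ and the use of time intermittency on a time scale shorter than the dissipation time. If the single-generation bound is too weak, I would instead run the full inductive scheme of Proposition \ref{mainprop1} with the parameters $\varepsilon'=\delta$ and stop at a time when the final generation has grown. The solution is then the one produced by that proposition, and norm inflation follows from its lower bound on the low-frequency mode.
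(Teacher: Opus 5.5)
Your heuristic count in Step 2, $\lambda^{-2\beta}\cdot\lambda^{2\beta+2\alpha}=\lambda^{2\alpha}$, is the gain the paper exploits, but the proposal has genuine gaps.

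\textbf{The growth mechanism is never constructed.} The source you plan to borrow it from does not contain it. A single shear has $\operatorname{div}(u\otimes u)=0$. Two shears at frequency $\lambda$ whose wave vectors point in genuinely different directions interact only at frequencies $\sim\lambda$. To get a low-frequency output you would need wave vectors differing by $O(1)$, a check that the Leray projection of the resulting forcing is nonzero, and an explicit approximate solution containing the growing low mode.

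Proposition~\ref{mainprop1} does not supply this. There $\bar v_k(0)=v_k(0)=v_k^0$, and the low-frequency part of $\mathbb{P}\operatorname{div}(v^p_{k+1}\otimes v^p_{k+1})$ drains $\bar v_k$. That is why $\bar v_k=v_k^0e^{-2(2\pi N_{k+1})^{2\beta}t}$ decays faster than $v_k$. Nothing grows: each block stays below its initial amplitude $N_k^{\beta+\alpha}$ at frequency $\sim N_k$, and $\omega^{(i)}$ is $O(\eta)$ in $B^{-\beta}_{\infty,\infty}$. So for $s>\beta+\alpha$ those solutions have small $B^{-s}_{\infty,\infty}$ norm at all times, and your fallback cannot produce inflation.

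The paper instead prescribes the large low mode in advance. It sets $U_1=\Phi_1(1-e^{-2(2\pi\lambda_q)^{2\beta}t})$, where $\Phi_1$ is a heat-evolved shear at frequency $\lambda=\delta^{-1}$ with amplitude $\lambda^{s+1}$, so $U_1(0)=0$ and $\operatorname{div}(U_1\otimes U_1)=0$. It then builds $U_2$ at frequency $\lambda_q$ and amplitude $\lambda_q^{\beta+\alpha}$, with modulated coefficients $\bar a_\xi=a_\xi(\mathrm{Id}-S(\Delta^{-1}\Phi_1)/\lambda_q^{2\alpha})$ from the geometric lemma. The sign is reversed relative to Definition~\ref{Apropdefivk}, so the low-frequency part of $\mathbb{P}\operatorname{div}(U_2^p\otimes U_2^p)$ equals $-(\partial_t+(-\Delta)^\beta)U_1$ exactly. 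The factor $\lambda_q^{-2\alpha}$, available because $\alpha>0$, lets $\Phi_1$ be arbitrarily large while the argument of $a_\xi$ stays in $B_{1/7}(\mathrm{Id})$.

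\textbf{Step 3 does not close in the norm you propose.} With $\|z(s)\|_{L^\infty}\le\|z\|\,s^{-\frac{\beta+\alpha}{2\beta}}$, the quadratic term leads to $\int_0^t(t-s)^{-\frac{1}{2\beta}}s^{-\frac{\beta+\alpha}{\beta}}\,ds$. This diverges for $\alpha\ge0$, which is exactly why Theorem~\ref{maintheo3} is restricted to $\alpha<0$.

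The paper places the perturbation in the strictly subcritical space with norm $\sup_{t\le t_q}t^{\tau}\|\rho(t)\|_{L^\infty}$, where $\tau<\frac{\beta-\alpha-\epsilon_2}{2\beta}<\frac12$. This requires the residuals to be small in subcritical weighted norms (Section~\ref{sectioninlfation}):
\begin{itemize}
\item the oscillation error is inverted with $\mathcal{R}$ to gain $\lambda_q^{-1+\gamma/2}$;
\item the linear error from the $x$- and $t$-dependence of $\bar a_\xi$ is kept in the form $\Delta^{[\beta]}\widetilde G$ rather than in divergence form.
\end{itemize}
The term $U_1\otimes\rho$ is absorbed using the shortness of $[0,t_q]$ with $t_q=\lambda_q^{-2\beta}$. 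Finally $\rho$ is shown to be small in $B^{-\beta}_{\infty,\infty}$. It is not small in $L^\infty$ at $t_q$, where the bound is only $\eta t_q^{-\tau}$. So the comparison with $\|U_1(t_q)\|_{B^{-s}_{\infty,\infty}}\gtrsim\lambda$ must be made in the negative norm, which your plan to make $z$ smaller than the output in $L^\infty$ does not address.
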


\subsection{Main idea of the construction}
\subsubsection{Non-uniqueness}
The non-uniqueness mechanism of the hyperdissipative Navier-Stokes equations follows from the approach of \cite{Palasek2025} and \cite{Coiculescu2025}. In this part, we outline the main idea of the proof, omitting certain details for the sake of exposition. In our construction, there exist two types of flows:

\noindent
$\bullet$ The first is the heat-dominated flow, which is the solution of heat equation with fractional dissipation, up to a small error. 
\begin{align*}
        \partial_tv_k+(-\Delta)^{\beta} v_k=l.o.t.\\
\end{align*}
where $l.o.t$ means some lower order terms. 
And the heat-dominated flows satisfy the following exponential decay in time:
\begin{gather*}
    \|v_k\|_{L^\infty(\mathbb{T}^2)}\lesssim N_k^{\beta+\alpha} e^{-N_{k}^{2\beta}t}.
\end{gather*}

\noindent
$\bullet$ The second is the inverse cascade-dominated flow, whose temporal partial derivative annihilates the nonlinear self-interactions of the heat-dominated flow.
\begin{align*}
        \partial_t\bar{v}_k+\mathbb{P}\operatorname{div} (v_{k+1}\otimes v_{k+1})=l.o.t.
\end{align*}
Compared with the heat-dominated flows, the inverse cascade-dominated flows exhibit a faster exponential temporal decay:
\begin{gather*}
    \|\bar{v}_k\|_{L^\infty(\mathbb{T}^2)}\lesssim N_k^{\beta+\alpha} e^{-N_{k+1}^{2\beta}t}.
\end{gather*}

The principal part of the solutions to the hyperdissipative Navier-Stokes equations is derived and given as follows:
\begin{gather*}
    v^{(1)}\triangleq\sum_{k\geq0\,even}v_k+\sum_{k\geq0\,odd}\bar{v}_k,
    \end{gather*}
    and 
    \begin{gather*}
    v^{(2)}\triangleq\sum_{k\geq0\,odd}v_k+\sum_{k\geq0\,even}\bar{v}_k.
\end{gather*}

In the following discussion, $i$ denotes either $1$ or $2$. As a result of the cancellation between the heat-dominated flows and the inverse cascade-dominated flows, we can deduce remaining terms $F^{(i)}$ and $\widetilde{F}^{(i)}$ which are defined by
\begin{align*} 
        \partial_t v^{(i)}+(-\Delta)^{\beta} v^{(i)}+ \mathbb{P}\operatorname{div} (v^{(i)}\otimes  v^{(i)})=-\mathbb{P}\operatorname{div}F^{(i)}-\Delta^{[\beta]}\widetilde{F}^{(i)},  
\end{align*}
are arbitrarily small in some subcritical norms. In other words, we conclude that $v^{(i)}$ is a solution to the hyperdissipative Navier-Stokes equation \eqref{e:NSf} with a small error in divergence form and another error in Laplacian form. 

In the final step, we add the perturbation $\omega^{(i)}$ to $v^{(i)}$ to correct the associated errors, so that $v^{(i)}+\omega^{(i)}$ is indeed a solution to the hyperdissipative Navier-Stokes equations \eqref{e:NSf}. Then $\omega^{(i)}$ must satisfy 
\begin{align*}
\begin{cases}
            \partial_t \omega^{(i)}+(-\Delta)^{\beta} \omega^{(i)}+\mathbb{P}\operatorname{div}\, (v^{(i)}\otimes \omega^{(i)}+\omega^{(i)}\otimes v^{(i)}+\omega^{(i)}\otimes \omega^{(i)})=\mathbb{P}\operatorname{div}F^{(i)}+\Delta^{[\beta]}\widetilde{F}^{(i)},\\
            \omega^{(i)}(0,\cdot)=0.
        \end{cases}
\end{align*}
Given that $F^{(i)}$ and $\widetilde{F}^{(i)}$ are small in some certain subcritical spaces, we are able to establish the existence of $\omega^{(i)}$ and confirm its smallness in appropriate subcritical norms by means of a fixed point argument. 

Using the fact that $\bar{v}_k(0,\cdot)=v_k(0,\cdot)$ and $\omega^{(i)}(0,\cdot)=0$, we know that ${v}^{(1)}+\omega^{(1)}$ and ${v}^{(2)}+\omega^{(2)}$ have the same initial data in $B^{-\beta-\alpha}_{\infty,\infty}(\mathbb{T}^2)$. However, owing to the distinct temporal decay behaviors of $v_k$ and $\bar{v}_k$, we can infer that ${v}^{(1)}+\omega^{(1)}$ and ${v}^{(2)}+\omega^{(2)}$ are distinct (see details in Section \ref{Proof of the Non-Uniqueness}).

In the following, we provide some remarks on the construction.

\textbf{(i)} 
For the case $\beta=1$, the solutions constructed in \cite{Coiculescu2025} satisfy all the conclusions of Theorem \ref{maintheo1} when $\beta=1$.
In the hyperdissipative regime $\beta>1$, Theorem \ref{maintheo1} and Theorem \ref{maintheo2} establish non-uniqueness and norm inflation results for arbitrarily small initial data in subcritical spaces. This stems from the fact that the exponent required in the method of \cite{Coiculescu2025} is strictly below the critical threshold, namely $\beta<2\beta-1$. Thus, in the arguments of the functions of the Geometric Lemma, we can absorb largeness arising from size of the $(k-1)$-th velocity potential by using the additional factor $\frac{1}{N_k^{2\alpha}}$ (see Definition \ref{Apropdefivk}). 

\textbf{(ii)} For the structure of the principal part, \cite{Coiculescu2025} introduced Mikado flows. By the intersection of the supports of the Mikado flows, the supports of $v_k(0,\cdot)$ will shrink as $k$ increases. This localization property ensures that the initial data lies in $BMO^{-1}$. However, in the present paper, we are not concerned with proving that the initial data belong to spaces such as $BMO^{-(2\beta-1)}$. Rather, we work in a class of Besov spaces. Consequently, it is not necessary to introduce Mikado flows in our main part, which simplifies the construction. 

 \textbf{(iii)}  Suppose that the remaining terms are only written in divergence form as \cite{Coiculescu2025}. In order to derive the perturbation, we need to ensure that
 \begin{gather*}
     \int_0^te^{(t-s)(-\Delta)^\beta}\mathbb{P}\operatorname{div}F^{(i)}(s)ds
 \end{gather*}
 is well-defined. So it suffices to control the residual errors in divergence form, $\mathbb{P}\operatorname{div}F^{(i)}$, in the norm 
 \begin{align*}
     \|F^{(i)}\|_Y\triangleq\sup_{t\in(0,1]}t^{1-\tau}\|F^{(i)}(t)\|_{L^{\infty}}
 \end{align*}
 where $\tau>0$ is a small parameter. However, such an estimate is available only when $1<\beta<2$. More precisely, since $v^{(i)}$ belongs to critical or subcritical spaces, depending on the choice of $\alpha$, the dominant contribution to $F^{(i)}$ arises from the linear error term. By applying the fractional Leibniz commutator estimate, the linear error term can be controlled only for $1<\beta<2$. Nevertheless, when $\beta\geq2$, the corresponding estimate is no longer sufficiently subcritical, and consequently $F^{(i)}\notin Y$.
 
 To overcome this difficulty, we introduce a new structure $\Delta^{[\beta]}\widetilde{F}^{(i)}$. Specifically, we recast the linear error term as $\Delta^{[\beta]}\widetilde{F}^{(i)}$, whereas all other error terms are retained in the form $\mathbb{P}\operatorname{div}F^{(i)}$. Here $F^{(i)}$ belong to $Y$.
 We verify that $\nabla^{\kappa}\widetilde{F}^{(i)}$ are small in $Y$, where $\kappa>0$ is a small parameter. Therefore, we deduce 
 \begin{gather*}
     \|\int_0^te^{(t-s)(-\Delta)^\beta}\Delta^{[\beta]}\widetilde{F}^{(i)}(s)ds\|_{L^{\infty}}\lesssim \|\nabla^{\kappa}\widetilde{F}^{(i)}\|_Y\int_0^t (t-s)^{-\frac{2[\beta]-\kappa}{2\beta}}s^{-1+\tau}ds\lesssim \|\nabla^{\kappa}\widetilde{F}^{(i)}\|_Y,
 \end{gather*}
 which can establishes the existence of the perturbation $\omega^{(i)}$.

\textbf{(iv)} In the two-dimensional case, one encounters technical difficulties in controlling the oscillatory error, similar to those faced in the approach to 2d Onsager's conjecture. Specifically, these difficulties stem from the overlap of the modes
\begin{gather*}
    \{e^{2\pi i N_k\xi\cdot x}\bar{\xi}\}_{\xi\in\Lambda}
\end{gather*}
with different orientations in the nonlinear term of the Navier-Stokes equations. To overcome this difficulty, we adopt the approach of \cite{miao2026nonuniquenesssmoothsolutionsnavierstokes}, which allows the excess oscillatory error to be absorbed into the pressure term.

\subsubsection{Norm inflation}
The norm inflation mechanism for the hyperdissipative Navier–Stokes equations is inspired by \cite{bmonormgrowth}. In this part, we sketch the main idea of the proof, omitting certain technical details for clarity. 

The principal perturbation is constructed as the sum of two components with distinct frequency scales,
\begin{align*}
    U=U_1+U_2,
\end{align*}
where $U_1$ satisfies the vanishing self-interaction condition and $U_2$ is the heat-dominated flow
\begin{align*}
    \operatorname{div}(U_1\otimes U_1)=0\quad\mathrm{and}\quad (\partial_t+(-\Delta)^{\beta})U_2=l.o.t.
\end{align*}
Furthermore, the quadratic self-interaction of $U_2$ is designed to balance the  evolution of $U_1$ up to lower-order terms:
\begin{align*}
    (\partial_t+(-\Delta)^{\beta})U_1+\mathbb{P}\operatorname{div}(U_2\otimes U_2)=l.o.t.
\end{align*}
Hence, by the properties of $U_1$ and $U_2$, the residual errors $G$ and $\widetilde{G}$, defined through
\begin{align*} 
        \partial_t U+(-\Delta)^{\beta} U+ \mathbb{P}\operatorname{div} (U\otimes  U)=-\mathbb{P}\operatorname{div}G-\Delta^{[\beta]}\widetilde{G},  
\end{align*}
can be made arbitrarily small in suitable subcritical norms.  Following the strategy of the non-uniqueness proof, we construct a perturbation $\rho$ with zero initial data that eliminates the remaining errors, so that $U+\rho$ is a solution to the hyperdissipative Navier--Stokes equations \eqref{e:NSf}.

For the norm inflation argument, $U_1$ and $U_2$ are constructed such that
\begin{align*}
    U(0,\cdot)=U_2(0,\cdot),
\end{align*}
thus the smallness of $U(0,\cdot)$ in $B^{-\beta-\alpha}_{\infty,\infty}$ follows directly from the corresponding estimate for $U_2(0,\cdot)$. Since $U_2$ is supported at substantially higher frequencies than $U_1$, $U_2$ decays faster than $U_1$. At a suitable time $t_q$, we have
\begin{align*}
    U(t_q,\cdot)+\rho(t_q,\cdot)=U_1(t_q,\cdot)+o(U_1(t_q,\cdot)) \quad\mathrm{in}\quad B^{-s}_{\infty,\infty}.
\end{align*}
Meanwhile, $\|U_1(t_q,\cdot)\|_{B^{-s}_{\infty,\infty}}$ is designed to be large, and consequently so is $\|U(t_q,\cdot)\|_{B^{-s}_{\infty,\infty}}$.

%Finally, we emphasize that our construction is considerably simpler than that in \cite{bmonormgrowth}. While the construction in \cite{bmonormgrowth} relies on a finite family of building blocks ${u_k}_{0\le k\le k^{*}}$ with $k^{*}<\infty$, where $u_0$ generates the norm inflation and $\sum_{1\le k \le k^{*}}u_k$ controls the initial data, our construction uses only two building blocks: $U_1$, which generates the norm inflation, and $U_2$, which controls the initial data.

 In \cite{bmonormgrowth}, Norm inflation is achieved by combining a family of building blocks $\{v_k\}_{0\le k\le k^{*}}$, where $k^{*}<\infty$ is chosen sufficiently large. The largeness of $k^{*}$ is designed to suppresses the influence of the initial data. By contrast, thanks to the structure of our construction and the function space we work in, the entire mechanism is reduced to only two building blocks, $U_1$ and $U_2$. The former generates the norm inflation, while the latter prescribes the initial data.

\section{Preliminaries}
\subsection{Besov space}
Let $\varphi,\chi\in C_c^\infty(\mathbb R^d)$ be nonnegative radial functions taking values in $[0,1]$ such that
\begin{align*}
\operatorname{supp}\varphi&\subset
\left\{\xi\in\mathbb R^d:
\frac34\le |\xi|\le\frac83
\right\},&\varphi&\equiv1\quad\text{on }
\left\{\xi\in\mathbb R^d:\frac67\le |\xi|\le\frac{12}{7}
\right\},
\\\operatorname{supp}\chi&\subset\left\{
\xi\in\mathbb R^d:|\xi|\le\frac43\right\},&\chi&\equiv1\quad\text{on }\left\{\xi\in\mathbb R^d:|\xi|\le\frac34\right\},
\end{align*}
and
\begin{align*}
\chi(\xi)+\sum_{j=1}^{\infty}\varphi(2^{-j}\xi)&=1,\qquad \xi\in\mathbb R^d,\\
\sum_{j\in\mathbb Z}\varphi(2^{-j}\xi)&=1,\qquad \xi\in\mathbb R^d\setminus\{0\}.
\end{align*}
Let $D(\mathbb{T}^d)$ denote the space of infinitely smooth functions on \(\mathbb{T}^d\) and $D'(\mathbb{T}^d)$ be defined as its topological dual. Every distribution $f\in D'(\mathbb{T}^d)$ admits the formal Fourier series
\begin{align*}
f(x)=\sum_{l\in\mathbb{Z}^d}\hat{f}(l)e^{2\pi \ii l\cdot x} ,
\end{align*}
where 
\begin{gather*}
    \hat{f}(l)=\fint_{\mathbb{T}^d} f(x)e^{-2\pi ix\cdot l}dx.
\end{gather*}

The nonhomogeneous dyadic blocks $\{\Delta_j\}_{j\geq-1}$ are defined by 
\begin{gather*}
    \Delta_{-1}f\triangleq\sum_{l\in\mathbb{Z}^d}\chi(l)\widehat{f}(l)e^{ 2\pi \ii l\cdot x}.
\end{gather*}
and, for each $j\geq0$,
\begin{align*}
\Delta_{j}f\triangleq\sum_{l\in\mathbb{Z}^d}\varphi(2^{-j}l)\widehat{f}(l)e^{ 2\pi \ii l\cdot x}.
\end{align*}
The homogeneous dyadic blocks $\{\dot{\Delta}_j\}_{j\in\mathbb{Z}}$ are defined for all $j\in\mathbb{Z}$ by 
\begin{align*}
\dot{\Delta}_jf\triangleq\sum_{l\in\mathbb{Z}^d}\varphi(2^{-j}l)\widehat{f}(l)e^{ 2\pi \ii l\cdot x}.
\end{align*}
Note that, on the torus, $\dot{\Delta}_jf=0$ whenever $j\leq-2$.

\begin{defi}\label{defibesov}
Let $s\in\mathbb{R}$ and $1\le p,q\le\infty$.  The nonhomogeneous Besov space $ B^s_{p,q}(\mathbb{T}^d)$ consists of all $\mathbb{T}^d$-periodic functions $u\in D'(\mathbb{T}^d)$  such that
\begin{align*}
\|u\|_{ B^s_{p,q}(\mathbb{T}^d)}&\triangleq\left(\sum_{j\geq-1}\left(2^{js}\|\Delta_j u\|_{L^p(\mathbb{T}^d)}\right)^{q}\right)^{\frac{1}{q}}<\infty, \quad\quad\text{if}\quad q<\infty,\\
\|u\|_{ B^s_{p,\infty}(\mathbb{T}^d)}&\triangleq\sup_{j\geq-1}2^{js}\|\Delta_j u\|_{L^p(\mathbb{T}^d)}<\infty, \quad\quad\quad\,\,\,\,\quad\quad\text{if}\quad q=\infty.
\end{align*}
And the homogeneous Besov space $ \dot{B}^s_{p,q}(\mathbb{T}^d)$ consists of all $\mathbb{T}^d$-periodic functions $u\in D'(\mathbb{T}^d)$  such that
\begin{align*}
\|u\|_{ \dot{B}^s_{p,q}(\mathbb{T}^d)}&\triangleq\left(\sum_{j\in\mathbb{Z}}\left(2^{js}\|\Delta_j u\|_{L^p(\mathbb{T}^d)}\right)^{q}\right)^{\frac{1}{q}}<\infty, \quad\quad\text{if}\quad q<\infty,\\
\|u\|_{ \dot{B}^s_{p,\infty}(\mathbb{T}^d)}&\triangleq\sup_{j\in\mathbb{Z}}2^{js}\|\Delta_j u\|_{L^p(\mathbb{T}^d)}<\infty, \quad\quad\quad\,\,\,\,\quad\quad\text{if}\quad q=\infty.
\end{align*}
\end{defi}

\subsection{Notation and parameters}
For $f\in L^2(\mathbb{T}^d)$, we define its Fourier coefficients by
\begin{gather*}
    \hat{f}(\xi)\triangleq\fint_{\mathbb{T}^d}f(x)e^{-2\pi i\xi\cdot x}ds,\quad\forall \xi\in\mathbb{Z}^d,
\end{gather*}
 and the inverse Fourier transform is given by
 \begin{gather*}
     \mathcal{F}^{-1}f(x)\triangleq\sum_{\xi\in\mathbb{Z}^d}f(\xi)e^{2\pi i\xi\cdot x},\quad\forall x\in\mathbb{Z}^d.
 \end{gather*}
 We denote by $\mathbb{P}$ the Leray projection onto divergence-free vector fields, namely
    \begin{align*}
        \mathbb{P}f \triangleq \mathcal{F}^{-1}((\mathrm{Id}-\frac{\xi\otimes\xi}{|\xi|^2})\hat{f}(\xi)),
    \end{align*}
for any vector-valued function $f$. We also define the fractional heat semigroup by
\begin{align*}
    e^{-t(-\Delta)^\beta}f\triangleq\mathcal{F}^{-1}(e^{-t|\xi|^{2\beta}}\hat{f}(\xi)).
\end{align*}
We define the convolution on $\mathbb{T}^2$ as follows. For any
$f,g\in C^{\infty}(\mathbb{T}^2;\mathbb{R}^2)$,
\begin{gather*}
    f*g(x)\triangleq\int_{\mathbb{T}^2}f(x-y)g(y)\,dy,\quad\forall x\in\mathbb{T}^2.
\end{gather*}

Let $S^{2\times2}(\mathbb{R})$ be the space of real symmetric $2\times2$ matrices. For $x\in\mathbb{R}$, we denote by $[x]$ the greatest integer less than or equal to $x$. For $a,b<1$, we denote the Beta integral
\begin{align*}
    I(a,b)\triangleq\int_0^1(1-s)^{-a}s^{-b}ds,
\end{align*}
whenever the integral is well-defined.

Throughout the rest of this paper, we focus on proving Proposition \ref{mainprop1} and Proposition \ref{mainprop2}. Thus, we fix $d=2$, $\beta>1$,  $0<\alpha<\min\{\frac{1}{2},\beta-1\}$, $s>\beta+\alpha$ and $\varepsilon,\varepsilon',\delta>0$.

For the non-uniqueness construction, we measure the amplitudes and frequencies of the building blocks by introducing the sequence
\begin{align*}
N_k\triangleq A^{b^k}, \qquad k\in\mathbb{N},
\end{align*}
where $A,b\in\mathbb{N}$. For the norm inflation construction, we introduce the parameters
\begin{align*}
    \lambda=\delta^{-1}\quad\mathrm{and}\quad \lambda_q=2^q,\quad q\in\mathbb{R}.
\end{align*}
We introduce a parameter $\gamma\in(0,\min\{\frac{\beta-1-\alpha}{2},\frac{1-2\alpha}{4}\}]$, which measures the subcriticality of the remaining terms. We further choose two parameters $\epsilon_1>0$ and $\epsilon_2>0$, where $\epsilon_1$ depends on $\epsilon_2$ and other universal constants appearing in the inequalities, and where $\epsilon_2\in(0,\frac{1-2\alpha-\gamma}{4}]$.
We also set a parameter $\tau\in(\frac{\beta+\alpha-1+\gamma}{2\beta},\frac{\beta-\alpha-\epsilon_2}{2\beta})$, which quantifies the subcriticality of the perturbations. 

Finally, the parameters $A$ and $q$ will be chosen sufficiently large, depending only on the other parameters.
We use the notation $X\lesssim Y$ to mean that $X\leq CY$ with some constants $C>0$ independent of A, but which may depend on other parameters. 
\subsection{Organization of the paper}
The organization of the rest of the paper is as follows. In Section \ref{sectprincipal}, we construct the initial data and the leading parts of the non-unique solutions, and show that the remaining terms are small in suitable subcritical norms. In Section \ref{sectioninlfation}, we construct the principal part of the solutions exhibiting norm inflation and also prove that the corresponding remaining terms are small in the same subcritical norms.
In Section \ref{sec:4}, we construct the perturbations associated with the vector fields constructed in the previous two sections. In Section \ref{Proof of the Non-Uniqueness}, we complete the proofs of Proposition \ref{mainprop1} and \ref{mainprop2}, thereby establishing the non-uniqueness and norm inflation results. Moreover, we prove the local well-posedness results, namely Theorem \ref{maintheo3} and \ref{maintheo4}.
The appendix contains some technical tools used in the paper.

\section{Construction of the principal part of solutions exhibiting non-uniqueness}\label{sectprincipal}

\subsection{Construction of the initial data} 

\begin{defi}\label{defiofS}
    We define
    \begin{itemize}
        \item A differential operator $S$ that takes vector fields to the symmetric rank-2
tensor,
\begin{gather*}
    S(f)\triangleq \begin{pmatrix}
2\partial_1f^1 & \partial_1f^2+\partial_2f^1 \\
\partial_1f^2+\partial_2f^1 & 2\partial_2f^2
\end{pmatrix}, \quad\forall f\in C^\infty(\mathbb{T}^2;\mathbb{R}^2),
\end{gather*}
Moreover, if $f$ is divergence-free, we have
\begin{gather}
    \mathrm{div}S(f)=\Delta f.\label{divS}
\end{gather}
\item A standard mollifier $\phi_k$ with length scale $N_{k+1}^{-\frac{1}{2}}N_{k}^{-\frac{1}{2}}$ is defined by
\begin{align*}
    \phi_k(x)=\sum_{l\in\mathbb{Z}^2}\widetilde{\phi}_k(x+l)\quad\mathrm{with}\quad \widetilde{\phi}_k(x)= N_{k+1}N_k\phi((N_{k+1}N_k)^{\frac{1}{2}}x),
\end{align*}
where $\phi\in C^\infty_x(\mathbb{R}^2;\mathbb{R})$ satisfying
\begin{align*}
    \phi\geq0,\quad \operatorname{supp}\phi\subset B_1(0),\quad  \int_{\mathbb{R}^2}\phi(x)\, dx=1.
\end{align*}
    \end{itemize}
\end{defi}

We construct the vector fields inductively. Recall that the set of vectors $\{\xi\}_{\xi\in\Lambda}$, the associated orthonormal vectors $\{\bar{\xi}\}_{\xi\in\Lambda}$ and the functions $\{a_\xi\}_{\xi\in\Lambda}$ are introduced in Lemma \ref{geometric}. 
\begin{defi}\label{Apropdefivk}
    Let $\xi$ be a vector in $\Lambda$. For the zeroth, we define
    \begin{gather}
        v_{0}^0(x)\triangleq N_0^{\beta+\alpha} \sin(2\pi N_0\xi\cdot x)\bar{\xi},
        \label{Adefiv0}
    \end{gather}
    Given a divergence-free $v_{k-1}^0$, we define the vector potential for the k-th case:
    \begin{gather}
        v_{k}^0(x)\triangleq C_\beta N_k^{-2[\beta]-1+\beta+\alpha}\Delta^{[\beta]}\nabla^{\perp}\left(\sum_{\xi\in\Lambda}a_{\xi,k}e^{2\pi iN_k\xi\cdot x} \right)*\phi_k(x),\label{defivk}
    \end{gather}
    where $C_\beta$ is a constant and $a_{\xi,k}$ is a function defined as
    \begin{align}
        C_\beta&\triangleq\sqrt{2}(2\pi)^{\beta-2[\beta]-1},\\
        a_{\xi,k}&\triangleq a_{\xi}\left(\frac{S(\Delta^{-1}v_{k-1}^0)}{N_k^{2\alpha}}+\mathrm{Id}\right).\label{defiaxik}
    \end{align}
\end{defi}

To verify that $\{v_{k}^0\}_{k\in\mathbb{N}}$ are well-defined, it suffices to establish that $\|\frac{S(\Delta^{-1}v_{k-1}^0)}{N_k^{2\alpha}}\|_{L^\infty({\mathbb{T}^2})}\leq \frac{1}{7}$ by Lemma \ref{geometric}.    

\begin{lemm}
     For all $k\in\mathbb{N}$, we have 
    \begin{gather}
        \|\frac{S(\Delta^{-1}v_{k}^0)}{N_{k+1}^{2\alpha}}\|_{L^\infty({\mathbb{T}^2})}\leq \frac{1}{7},\label{LinftyAphik2}
    \end{gather}
    provided that $A$ and $b$ are sufficiently large.
\end{lemm}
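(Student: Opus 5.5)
We argue by induction on $k$, proving jointly with \eqref{LinftyAphik2} a quantitative bound on $S(\Delta^{-1}v_k^0)$ of the form
\begin{gather*}
\|S(\Delta^{-1}v_k^0)\|_{L^\infty(\mathbb{T}^2)}\leq C_0 N_k^{\beta+\alpha-1},
\end{gather*}
with $C_0$ depending only on $\beta$, $\phi$ and the functions $\{a_\xi\}_{\xi\in\Lambda}$ of Lemma \ref{geometric}, and not on $A,b,k$. Granting this, \eqref{LinftyAphik2} follows from
\begin{gather*}
C_0N_k^{\beta+\alpha-1}N_{k+1}^{-2\alpha}=C_0A^{b^k(\beta+\alpha-1-2\alpha b)}\leq C_0A^{-1}\leq\frac17,
\end{gather*}
once $b$ is chosen with $2\alpha b>\beta+\alpha$ and then $A$ is large. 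This is where the factor $N_{k+1}^{-2\alpha}$ in \eqref{defiaxik} absorbs the growth of the $k$-th potential.

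\textbf{Base case $k=0$.} Since $v_0^0$ is a single Fourier mode at frequency $N_0\xi$, $\Delta^{-1}v_0^0=-(2\pi N_0|\xi|)^{-2}v_0^0$. One derivative from $S$ then gives $\|S(\Delta^{-1}v_0^0)\|_{L^\infty}\lesssim N_0^{\beta+\alpha-1}$.

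\textbf{Inductive step.} Assume \eqref{LinftyAphik2} holds at level $k-1$, so $v_k^0$ is well defined: the argument of $a_\xi$ lies in the ball where Lemma \ref{geometric} applies. Since convolution with $\phi_k$ commutes with derivatives and $\|\phi_k\|_{L^1}=1$,
\begin{gather*}
\|S(\Delta^{-1}v_k^0)\|_{L^\infty}\lesssim N_k^{-2[\beta]-1+\beta+\alpha}\,\big\|\nabla\nabla^\perp\Delta^{[\beta]-1}g_k\big\|_{L^\infty},\qquad g_k\triangleq\sum_{\xi\in\Lambda}a_{\xi,k}e^{2\pi iN_k\xi\cdot x}.
\end{gather*}
We expand the $2[\beta]$ derivatives by the Leibniz rule. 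When all derivatives fall on the phase, the term is $\lesssim N_k^{2[\beta]}$ times $\sup|a_\xi|$ on the admissible ball, giving exactly $N_k^{\beta+\alpha-1}$. If a derivative falls on $a_{\xi,k}$, we write $a_{\xi,k}=a_\xi(M_{k-1})$ with $M_{k-1}\triangleq N_k^{-2\alpha}S(\Delta^{-1}v_{k-1}^0)+\mathrm{Id}$ and use the chain rule (Faà di Bruno) with smooth $a_\xi$ on the ball. Each such derivative costs $\|\nabla^m M_{k-1}\|_{L^\infty}$, so we need the stronger inductive hypothesis
\begin{gather*}
\|\nabla^m S(\Delta^{-1}v_{k-1}^0)\|_{L^\infty}\leq C_m N_{k-1}^{\beta+\alpha-1+m}\quad\text{for } m\leq 2[\beta]+2.
\end{gather*}
With it, $\|\nabla^m a_{\xi,k}\|\lesssim N_{k-1}^m N_k^{-2\alpha}\ll N_k^m$ because $N_{k-1}=N_k^{1/b}$. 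These terms are therefore lower order and are dominated by $N_k^{\beta+\alpha-1}$ for large $A$. The same computation, with up to $2[\beta]+2$ extra derivatives, propagates the derivative bounds to level $k$. Here $N_k$ dominates, since the mollifier scale $(N_kN_{k+1})^{-1/2}$ is finer than $N_k^{-1}$ and so costs nothing.

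\textbf{Main obstacle.} The difficulty is keeping every constant uniform in $k$. The only danger is the chain-rule constants, which depend on bounds of $a_\xi$ and its derivatives on the ball. These stay fixed precisely because the inductive smallness keeps the argument of $a_\xi$ in that ball, so the induction closes. Taking $b$ large first and then $A$ large handles all the lower-order terms uniformly in $k$.
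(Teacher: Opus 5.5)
Your reduction to an $L^\infty$ bound, the base case and the final choice of $b$ and $A$ are fine. The inductive step as you set it up does not close, however.

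To bound $\nabla^m S(\Delta^{-1}v_k^0)$ by letting all derivatives fall on $g_k$, you need $\nabla^{m+2[\beta]}g_k$. That requires $\nabla^j a_{\xi,k}$ for all $j\le m+2[\beta]$, and by the chain rule $\nabla^i S(\Delta^{-1}v_{k-1}^0)$ for all $i\le m+2[\beta]$. Each level therefore loses $2[\beta]\ge 2$ derivatives. From your hypothesis $m\le 2[\beta]+2$ at level $k-1$ you recover the bound at level $k$ only for $m\le 2$, so the claim that ``the same computation propagates the derivative bounds'' is false.

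Carrying all $m\in\mathbb{N}$ instead does not obviously help. The constant $C_m$ at level $k$ would then depend on $C_{m+2[\beta]}$ at level $k-1$, and you give no argument that these constants stay bounded uniformly in $k$. This loss of derivatives, not the chain-rule constants, is the real obstacle.

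The missing idea is the role of the mollifier, which you dismissed as costing nothing. The paper carries only the $L^\infty$ smallness through the induction. It obtains derivative control at the previous level for free by placing all derivatives on $\phi_{k-1}$ (for $k-1\ge1$; for $k-1=0$ the explicit plane wave $v_0^0$ gives this directly):
\[
\|\nabla^j S(\Delta^{-1}v_{k-1}^0)\|_{L^\infty}\le\|S(\Delta^{-1}\Phi_{k-1}^0)\|_{L^\infty}\,\|\nabla^j\phi_{k-1}\|_{L^1}\lesssim_j N_k^{2\alpha}(N_{k-1}N_k)^{\frac{j}{2}}.
\]
This yields $\|\nabla^j M_{k-1}\|_{L^\infty}\lesssim_j (N_{k-1}N_k)^{j/2}$ using only the inductive bound $\le\frac17$.

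Since $(N_{k-1}N_k)^{1/2}\ll N_k$, each derivative landing on $a_{\xi,k}$ in the Leibniz expansion is still no more expensive than one landing on the phase. You then get $\|S(\Delta^{-1}\Phi_k^0)\|_{L^\infty}\lesssim N_k^{\beta+\alpha-1}$ with constants depending only on $\beta$, $\phi$ and the $a_\xi$. The scale $(N_kN_{k+1})^{-1/2}$ is chosen exactly so that this regularization is affordable at the next level while the mollification error stays small. With this step replacing your derivative hypothesis, your argument goes through.
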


\begin{proof}
    With the standard mollifier estimate, it suffices to prove that
    \begin{align*}
        \|\frac{S(\Delta^{-1}\Phi_{k}^0)}{N_{k+1}^{2\alpha}}\|_{L^\infty({\mathbb{T}^2})}\leq \frac{1}{7},
    \end{align*}
    where $\Phi_{k}^0$ is the part of $v_{k}^0$ before applying the mollifier $\phi_k$:
    \begin{align}
        \Phi_{k}^0\triangleq\begin{cases}
            v_0^0,\quad k=0,\\
            C_\beta N_k^{-2[\beta]-1+\beta+\alpha}\Delta^{[\beta]}\nabla^{\perp}\left(\sum_{\xi\in\Lambda}a_{\xi,k}e^{2\pi iN_k\xi\cdot x} \right),\quad k\geq1.
        \end{cases}\label{AAAA}
    \end{align}
    
    We prove \eqref{LinftyAphik2} by induction. For k=0, it follows from \eqref{Adefiv0} and the fact that S is a first-order differential operator that 
    \begin{gather*}
        \| S(\Delta^{-1}\Phi_{0}^0)\|_{L^\infty({\mathbb{T}^2})}\lesssim N_0^{\beta+\alpha-1}.
    \end{gather*}
    Thus, choosing $A$ and $b$ sufficiently large, we get
     \begin{gather}
        \|\frac{S(\Delta^{-1}\Phi_{0}^0)}{N_{1}^{2\alpha}}\|_{L^\infty({\mathbb{T}^2})}\lesssim  N_0^{\beta+\alpha-1}N_1^{-2\alpha}\lesssim A^{\beta+\alpha-1-2b\alpha}  \leq \frac{1}{7}. 
    \end{gather}
    Once again from \eqref{Adefiv0}, it yields that
    \begin{gather}
        \|\nabla^m \frac{S(\Delta^{-1}v_{0}^0)}{N_{1}^{2\alpha}}\|_{L^\infty({\mathbb{T}^2})}\lesssim N_0^{m},\quad\quad \forall m\in\mathbb{N}.\label{mAphi0}
    \end{gather}
    Now, we suppose $\|\frac{S(\Delta^{-1}\Phi_{k-1}^0)}{N_{k}^{2\alpha}}\|_{L^\infty({\mathbb{T}^2})}\leq \frac{1}{7}$. Using the standard mollifier estimate or \eqref{mAphi0}, we have
    \begin{gather}
        \|\nabla^m\frac{S(\Delta^{-1}v_{k-1}^0)}{N_k^{2\alpha}}\|_{L^\infty({\mathbb{T}^2})}\lesssim (N_{k-1}N_k)^{\frac{m}{2}},\quad\quad \forall m\in\mathbb{N}. \label{mAphik-1}
    \end{gather}
    By virtue of \eqref{AAAA}, \eqref{mAphik-1} and the Leibniz rule, we deduce
    \begin{gather*}
         \| S(\Delta^{-1}\Phi_{k}^0)\|_{L^\infty({\mathbb{T}^2})}\lesssim  N_k^{-2[\beta]-1+\beta+\alpha}(N_k^{2[\beta]}+(N_{k-1}N_k)^{[\beta]})\lesssim N_k^{-1+\beta+\alpha},
    \end{gather*}
    which immediately implies 
    \begin{gather*}
        \|\frac{S(\Delta^{-1}\Phi_{k}^0)}{N_{k+1}^{2\alpha}}\|_{L^\infty({\mathbb{T}^2})}\lesssim N_k^{-1+\beta+\alpha} N_{k+1}^{-2\alpha}\lesssim A^{b^k(\beta+\alpha-1-2b\alpha)}\leq\frac{1}{7},
    \end{gather*}
    if $A$ and $b$ are sufficiently large.
\end{proof}

Next, we establish some estimates on $v^0_k$ and $a_{\xi,k}$.
\begin{prop}\label{propofinitial}
    For any $k\in\mathbb{N}$, $\{v^0_k\}_{k\in\mathbb{N}}$ is divergence-free and satisfies
    \begin{align}
        \|\nabla^m v^0_k\|_{L^{\infty}(\mathbb{T}^2)}&\lesssim N_k^{\beta+\alpha+m},\quad\forall m\in\mathbb{N}.\label{mvk}
    \end{align}
    Moreover, we have
    \begin{gather}
        \|\nabla^m a_{\xi,k}\|_{L^\infty(\mathbb{T}^2)}\lesssim N_{k-1}^m,\quad\forall m\in\mathbb{N}.\label{maxik}
    \end{gather}
\end{prop}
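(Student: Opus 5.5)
Divergence-freeness is immediate from the structure of the definition. For $k=0$, $\operatorname{div} v_0^0 = N_0^{\beta+\alpha}\,2\pi N_0(\xi\cdot\bar\xi)\cos(2\pi N_0\xi\cdot x)=0$, because $\bar\xi\perp\xi$. For $k\ge1$, $v_k^0$ is obtained from the scalar $\sum_\xi a_{\xi,k}e^{2\pi iN_k\xi\cdot x}$ by applying $\nabla^\perp$, then the scalar operator $\Delta^{[\beta]}$, then convolution with $\phi_k$. Both $\Delta^{[\beta]}$ and convolution commute with $\operatorname{div}$, and $\operatorname{div}\nabla^\perp\equiv0$, so $\operatorname{div} v_k^0=0$. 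Realness of $v_k^0$ should follow from the symmetry of $\Lambda$ and the conjugation property of the $a_\xi$ in Lemma \ref{geometric}, but only divergence-freeness is asked here.

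Next we prove \eqref{maxik}, which is the step that needs care. Since $a_\xi$ is smooth on the closed ball of radius $1/7$ around $\mathrm{Id}$ given by Lemma \ref{geometric}, and the argument $\frac{S(\Delta^{-1}v_{k-1}^0)}{N_k^{2\alpha}}+\mathrm{Id}$ stays in that ball by \eqref{LinftyAphik2}, we may apply the Faà di Bruno (chain rule) estimate. This bounds $\|\nabla^m a_{\xi,k}\|_{L^\infty}$ by sums of products $\prod_j\|\nabla^{m_j}G\|_{L^\infty}$ with $\sum m_j=m$, where $G=S(\Delta^{-1}v_{k-1}^0)/N_k^{2\alpha}$.

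The lemma's proof only gives the crude bound $(N_{k-1}N_k)^{m/2}$ in \eqref{mAphik-1}, which is too weak. Instead we argue by induction on $k$, assuming \eqref{mvk} for $k-1$. Since $S\Delta^{-1}$ is a zeroth-order Fourier multiplier acting on the frequency-localized pieces, it is bounded on these functions with the same derivative scaling. This gives
\[
\|\nabla^{m_j}G\|_{L^\infty}\lesssim N_{k-1}^{\beta+\alpha-1+m_j}N_k^{-2\alpha}\le N_{k-1}^{m_j}
\]
for $m_j\ge1$, since $N_{k-1}^{\beta+\alpha-1}N_k^{-2\alpha}\le1$ for $b$ large. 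Plugging this into the chain rule gives $\|\nabla^m a_{\xi,k}\|_{L^\infty}\lesssim N_{k-1}^m$.

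The main obstacle is the use of $\Delta^{-1}$ on $L^\infty$. To handle it, we work with $\Phi_{k-1}^0$ written as $\Delta^{[\beta]}\nabla^\perp$ of a modulated plane wave and compute $\Delta^{-1}$ on it explicitly (stationary-phase/Fourier localization at $|\cdot|\sim N_{k-1}$, as implicitly done in the preceding lemma). Alternatively, we simply invoke the same bound the preceding lemma used for $S(\Delta^{-1}\Phi^0_k)$ together with its derivatives; the mollifier does not increase $L^\infty$ norms.

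Finally, for \eqref{mvk}, expand $\Delta^{[\beta]}\nabla^\perp\nabla^m\bigl(a_{\xi,k}e^{2\pi iN_k\xi\cdot x}\bigr)$ by the Leibniz rule. Each derivative falls either on the exponential, costing $N_k$, or on $a_{\xi,k}$, costing at most $N_{k-1}\le N_k$ by \eqref{maxik}. Convolution with $\phi_k$ preserves $L^\infty$ bounds, and we place all derivatives on the unmollified function. This gives
\[
\|\nabla^m v_k^0\|_{L^\infty}\lesssim N_k^{-2[\beta]-1+\beta+\alpha}N_k^{2[\beta]+1+m}=N_k^{\beta+\alpha+m}.
\]
The case $k=0$ is a direct computation, which closes the induction.
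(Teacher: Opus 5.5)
Your proposal is correct and is essentially the paper's argument: the Leibniz rule gives \eqref{mvk} and the chain rule gives \eqref{maxik}, and you make explicit the induction on $k$ that the paper's one-line chain-rule step leaves implicit, rightly noting that the mollifier bound \eqref{mAphik-1} alone cannot yield $N_{k-1}^m$. The $\Delta^{-1}$ obstacle you flag is harmless: since $[\beta]\ge1$, for $k\ge2$ one has $\Delta^{-1}v^0_{k-1}=C_\beta N_{k-1}^{-2[\beta]-1+\beta+\alpha}\Delta^{[\beta]-1}\nabla^{\perp}\bigl(\sum_{\xi\in\Lambda}a_{\xi,k-1}e^{2\pi iN_{k-1}\xi\cdot x}\bigr)*\phi_{k-1}$ (and $v^0_0$ is a Laplacian eigenfunction), so the Leibniz rule with \eqref{maxik} at level $k-1$ directly gives the gain of one power of $N_{k-1}$ from $S\Delta^{-1}$, which is an operator of order $-1$, not zeroth order as you state, though the bound you actually write is the correct one.
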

\begin{proof}
    The divergence-free property of $\{v^0_k\}_{k\in\mathbb{N}}$ is easily derived from Definition \ref{Apropdefivk}. With Definition \ref{Apropdefivk}, Definition \ref{defiV0} and \eqref{mAphik-1}, one can use the Leibniz rule to obtain 
    \begin{gather*}
        \|\nabla^m v^0_k\|_{L^\infty(\mathbb{T}^2)}\lesssim N_k^{-2[\beta]-1+\beta+\alpha}\left(N_k^{2[\beta]+1+m}+(N_{k-1}N_k)^{\frac{2[\beta]+1+m}{2}}\right)\lesssim N_k^{\beta+\alpha+m},
    \end{gather*} 
     and then applying the chain rule yields
    \begin{gather*}
        \|\nabla^m a_{\xi,k}\|_{L^\infty(\mathbb{T}^2)}\lesssim N_{k-1}^m,
    \end{gather*}
    for any $m\in\mathbb{N}$.
\end{proof}

Now, we define the initial data in terms of the vector fields introduced above.
\begin{defi}\label{defiV0}
   We set the initial data by
    \begin{gather*}
        V^0\triangleq\sum_{k\in\mathbb{N}}v_k^0.
    \end{gather*}
\end{defi}

Before establishing the smallness of $V^0$ in $B^{-\beta-\alpha-\varepsilon}_{\infty,\infty}(\mathbb{T}^2)$, for any $k\geq1$, we decompose $v^0_k\triangleq v^{0,p}_k+v^{0,e}_k$ where
      \begin{align}
          v^{0,p}_k&\triangleq(-1)^{[\beta]}\sqrt{2}(2\pi)^{\beta}iN_k^{\beta+\alpha}\sum_{\xi\in\Lambda}a_{\xi,k}e^{2\pi iN_k\xi\cdot x}\bar{\xi},\label{defiv0p}\\
          v^{0,e}_k&\triangleq v_k^{0,e,1}+v_k^{0,e,2},\nonumber
      \end{align}
      having defined
      \begin{align}
          v_k^{0,e,1}&\triangleq C_\beta N_k^{-2[\beta]-1+\beta+\alpha}\Delta^{[\beta]}\nabla^{\perp}\left(\sum_{\xi\in\Lambda}a_{\xi,k}e^{2\pi iN_k\xi\cdot x} \right)\nonumber\\&\quad-(-1)^{[\beta]}\sqrt{2}(2\pi)^{\beta}iN_k^{\beta+\alpha}\sum_{\xi\in\Lambda}a_{\xi,k}e^{2\pi iN_k\xi\cdot x}\bar{\xi},\label{defiv0e1}\\
          v_k^{0,e,2}&\triangleq C_\beta N_k^{-2[\beta]-1+\beta+\alpha}\Delta^{[\beta]}\nabla^{\perp}\left(\sum_{\xi\in\Lambda}a_{\xi,k}e^{2\pi iN_k\xi\cdot x} \right)*(\phi_k-\delta).\label{defiv0e2}
      \end{align}

Actually, $v^{0,p}_k$ denotes the component obtained from $v_k^0$ by applying all derivatives to the oscillatory factor $e^{2\pi iN_k\xi\cdot x}$, while omitting the mollifier $\phi_k$. On the other hand, $v_k^{0,e,1}$ denotes the component of $v_k^0$ in which not all derivatives fall on the oscillatory factor $e^{2\pi iN_k\xi\cdot x}$, again with the mollifier $\phi_k$ omitted. $v_k^{0,e,2}$ is the mollification error of $v_k^0$.

In the following, we establish the smallness of the initial data $V^0$ in ${B^{-\beta-\alpha-\varepsilon}_{\infty,\infty}(\mathbb{T}^2)}$.
\begin{prop}\label{propofinitial}
    Let $\varepsilon,\varepsilon'>0$.  $V^0$ is divergence-free and satisfies
    \begin{gather*}
        \|V^0\|_{B^{-\beta-\alpha-\varepsilon}_{\infty,\infty}(\mathbb{T}^2)}\leq \varepsilon'.
    \end{gather*}
\end{prop}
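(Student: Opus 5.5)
Divergence-freeness of $V^0$ follows from the previous proposition: each $v_k^0$ is divergence-free, and the series converges in $\mathcal{D}'$ (the bound below gives absolute convergence in $B^{-\beta-\alpha-\varepsilon}_{\infty,\infty}$), so divergence passes to the limit. For the norm I will use the triangle inequality
\begin{align*}
\|V^0\|_{B^{-\beta-\alpha-\varepsilon}_{\infty,\infty}}\le\sum_{k\in\mathbb{N}}\|v_k^0\|_{B^{-\beta-\alpha-\varepsilon}_{\infty,\infty}}
\end{align*}
and aim for a per-block bound $\|v_k^0\|_{B^{-\beta-\alpha-\varepsilon}_{\infty,\infty}}\lesssim N_k^{-\varepsilon}$, possibly up to a harmless power such as $N_k^{-\varepsilon/2}$. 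Since $N_k=A^{b^k}$, the sum is then $\lesssim \sum_k A^{-\varepsilon b^k/2}$, which is below $\varepsilon'$ once $A$ is large.

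\textbf{Per-block estimate.} For each $j\ge -1$ I split according to the size of $2^j$ relative to $N_k$.

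\emph{High frequencies, $2^j\ge N_k^{1-\eta}$}, with a small $\eta>0$ to be fixed. Using \eqref{mvk} with $m=0$,
\begin{align*}
2^{-j(\beta+\alpha+\varepsilon)}\|\Delta_j v_k^0\|_{L^\infty}\lesssim 2^{-j(\beta+\alpha+\varepsilon)}N_k^{\beta+\alpha}\lesssim N_k^{(\beta+\alpha)\eta-(1-\eta)\varepsilon},
\end{align*}
which is $\lesssim N_k^{-\varepsilon/2}$ for $\eta$ small.

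\emph{Low frequencies, $2^j< N_k^{1-\eta}$.} Here I use that $v_k^0$ is essentially concentrated at frequency $N_k$. By \eqref{maxik}, the amplitude $a_{\xi,k}$ has frequencies at most about $N_{k-1}\ll N_k^{1-\eta}$ (taking $b$ large). Hence the modes $a_{\xi,k}e^{2\pi iN_k\xi\cdot x}$, as well as all their derivatives and their mollifications by $\phi_k$, have Fourier content essentially supported near $|\ell|\sim N_k$. I will make this precise by a nonstationary-phase / integration-by-parts argument. Write $\Delta_j(f e^{2\pi iN_k\xi\cdot x})$ as a convolution with the kernel $\check\varphi_j$. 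Integrating by parts $M$ times in the phase $e^{2\pi iN_k\xi\cdot y}$ moves derivatives onto $f\,\check\varphi_j(x-y)$. Each derivative costs at most $\max(N_{k-1},2^j)\le N_k^{1-\eta}$ and gains $N_k^{-1}$, so
\begin{align*}
\|\Delta_j v_k^0\|_{L^\infty}\lesssim N_k^{\beta+\alpha}N_k^{-\eta M}.
\end{align*}
Multiplying by $2^{-j(\beta+\alpha+\varepsilon)}\le 2^{\beta+\alpha+\varepsilon}$, and handling the $\Delta_{-1}$ block the same way, this is $\lesssim N_k^{-1}$ once $M$ is large. The mollifier commutes with $\Delta_j$ and is bounded on $L^\infty$, so it causes no trouble. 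The prefactor $N_k^{-2[\beta]-1+\beta+\alpha}$ together with the derivatives $\Delta^{[\beta]}\nabla^\perp$ also gives total size $N_k^{\beta+\alpha}$, by the same Leibniz count as in the proof of \eqref{mvk}. The case $k=0$ is immediate, since $v_0^0$ is a single Fourier mode of frequency $N_0$.

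The main obstacle is the low-frequency step: making rigorous that the smooth amplitude $a_{\xi,k}$ does not leak mass to low frequencies. The integration by parts requires the amplitude derivative bounds \eqref{maxik}, the separation $N_{k-1}\ll N_k^{1-\eta}$ ensured by $b$ large, and uniform $L^1$ bounds on $\nabla^m\check\varphi_j\lesssim 2^{jm}$. On the torus it is cleaner to work on the Fourier side directly. One has $\widehat{a_{\xi,k}}(\ell)\lesssim_M N_{k-1}^{M}|\ell|^{-M}$ from \eqref{maxik}. For $\Delta_j$ with $2^j<N_k^{1-\eta}$, only the coefficients $\widehat{a_{\xi,k}}(\ell-N_k\xi)$ with $|\ell|\lesssim N_k^{1-\eta}$ enter, where $|\ell-N_k\xi|\gtrsim N_k$. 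Summing over at most $N_k^{2}$ such $\ell$ gives the required rapid decay.
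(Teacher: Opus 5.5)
Your proof is correct, but it is organized differently from the paper's.

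\textbf{The paper's route.} It splits each $v_k^0$ ($k\ge1$) into three pieces: the principal part $v_k^{0,p}$ (all derivatives on the phase), the remainder $v_k^{0,e,1}$, and the mollification error $v_k^{0,e,2}$. For the principal part it fixes $j$ and splits the \emph{sum over $k$} into two groups. The resonant window $|2^j-N_k|\le\frac34N_k$ contains at most two $k$'s and is handled by the crude $L^\infty$ bound. The remaining $k$'s are handled by the Fourier decay of $\hat a_{\xi,k}$, via $|l-N_k\xi|\gtrsim\max\{|l|,N_k\}$. The mollification error is bounded in $L^\infty$ by $N_k^{\frac12+\beta+\alpha}N_{k+1}^{-\frac12}$, and $v_k^{0,e,1}$ is dismissed as ``similar''.

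\textbf{Your route.} You apply the triangle inequality in $k$ first and prove a per-block bound, splitting $j$ at $2^j\sim N_k^{1-\eta}$. Above the threshold, $\|v_k^0\|_{L^\infty}\lesssim N_k^{\beta+\alpha}$ suffices; you pay a loss on $N_k^{1-\eta}\le 2^j\lesssim N_k$ and absorb it by taking $\eta$ small relative to $\varepsilon$. Below the threshold, the same Fourier-decay estimate applies to $v_k^0$ as a whole. This works because the symbol of $\Delta^{[\beta]}\nabla^\perp$ is at most $N_k^{2[\beta]+1}$ there, $|\hat\phi_k|\le 1$, and there are at most $N_k^2$ relevant lattice points.

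\textbf{What each buys.} Your version is shorter and needs no decomposition. It avoids both the separate mollification estimate and the unexplained treatment of $v_k^{0,e,1}$. It also yields directly the per-block bound $\|v_k^0\|_{B^{-\beta-\alpha-\varepsilon}_{\infty,\infty}}\lesssim N_k^{-\varepsilon/2}$. The paper later invokes this bound, without proof, for continuity of $v^{(i)}$ at $t=0$ in the proof of Proposition~\ref{mainprop1}; your loss $\varepsilon\to\varepsilon/2$ there is harmless. The paper's fixed resonant window avoids the $\eta$-loss and gives the sharp $N_k^{-\varepsilon}$. Its decomposition also records that the error pieces are small in stronger Besov norms. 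Neither feature is needed for the statement at hand.
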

    
\begin{proof}
The divergence-free property of $V^0$ is inherited from $\{v^0_k\}_{k\in\mathbb{N}}$. We therefore omit the proof and focus on the estimates for $V^0$.
      
      First, we prove $\|\sum_{k\geq1}v^{0,p}_k\|_{B^{-\beta-\alpha-\varepsilon}_{\infty,\infty}(\mathbb{T}^2)}\lesssim\varepsilon'$. Without loss of generality, fix $\xi\in\Lambda$. It suffices to show $\|\sum_{k\geq1}N_k^{\beta+\alpha}a_{\xi,k}e^{2\pi iN_k\xi\cdot x}\|_{B^{-\beta-\alpha-\varepsilon}_{\infty,\infty}(\mathbb{T}^2)}\lesssim \varepsilon'$.
      Taking a fourier series of $\{a_{\xi,k}\}_{k\geq1}$, we have
      \begin{align}
          a_{\xi,k}(x)&=\sum_{l\in\mathbb{Z}^2}\hat{a}_{\xi,k}(l)e^{2\pi il\cdot x},\nonumber\\
          a_{\xi,k}(x)e^{2\pi iN_k\xi\cdot x}&=\sum_{l\in\mathbb{Z}^2}\hat{a}_{\xi,k}(l-N_k\xi)e^{2\pi il\cdot x}.\label{fourieraxi}
      \end{align}
     By \eqref{maxik} and Parseval's identity, we obtain
      \begin{gather*}
\|\nabla^ma_{\xi,k}\|_{L^2(\mathbb{T}^2)}=\left(\sum_{l\in\mathbb{Z}^d}|l|^{2m}|\hat{a}_{\xi,k}(l)|^2\right)^{\frac{1}{2}}\lesssim \|\nabla^ma_{\xi,k}\|_{L^\infty(\mathbb{T}^2)}\lesssim N_{k-1}^m,\quad\quad\forall m\in\mathbb{N},
      \end{gather*}
      which immediately implies
      \begin{gather}
          |l|^{m}|\hat{a}_{\xi,k}(l)|\lesssim N_{k-1}^m,\quad\quad\forall (l,m)\in \mathbb{Z}^2\times \mathbb{N}.\label{hataxik}
      \end{gather}

    Recalling the definition of $B^{-\beta-\alpha-\varepsilon}_{\infty,\infty}(\mathbb{T}^2)$ in Definition \ref{defibesov} and fixing $j\geq-1$, we decompose the following into two parts:
    \begin{align*}
        \Delta_j\sum_{k\in\mathbb{N}}N_k^{\beta+\alpha}a_{\xi,k}e^{2\pi iN_k\xi\cdot}&=\Delta_j(\sum_{k:|2^j-N_k|\geq\frac{3}{4}N_k}+\sum_{k:|2^j-N_k|\leq\frac{3}{4}N_k})N_k^{\beta+\alpha}a_{\xi,k}e^{2\pi iN_k\xi\cdot}\\
        &\triangleq I_1+I_2.
    \end{align*}

    Consider $I_1$. If $j\geq0$, by \eqref{fourieraxi} and \eqref{hataxik},
    \begin{align}
        2^{-j(\beta+\alpha+\varepsilon)}\|I_1\|_{L^{\infty}(\mathbb{T}^2)}&\lesssim \sum_{k:|2^j-N_k|\geq\frac{3}{4}N_k} 2^{-j(\beta+\alpha+\varepsilon)}N_k^{\beta+\alpha}\|\Delta_j(a_{\xi,k}e^{2\pi iN_k\xi\cdot})\|_{L^{\infty}(\mathbb{T}^2)}\nonumber\\
        &\lesssim \sum_{k:|2^j-N_k|\geq\frac{3}{4}N_k} 2^{-j(\beta+\alpha+\varepsilon)}N_k^{\beta+\alpha}\sum_{\frac{3}{4}\cdot2^j\leq|l|\leq\frac{8}{3}\cdot 2^j}|\hat{a}_{\xi,k}(l-N_k\xi)|\nonumber\\
        &\lesssim \sum_{k:|2^j-N_k|\geq\frac{3}{4}N_k} 2^{-j(\beta+\alpha+\varepsilon)}N_k^{\beta+\alpha}\sum_{\frac{3}{4}\cdot2^j\leq|l|\leq\frac{8}{3}\cdot 2^j} \frac{N_{k-1}^{\beta+\alpha+4}}{|l-N_k\xi|^{\beta+\alpha+4}}\nonumber
        \\&\lesssim\sum_{k\geq1}N_k^{\beta+\alpha}\sum_{l\in\mathbb{Z}^2\setminus\{0\}}\frac{N_{k-1}^{\beta+\alpha+4}}{|l|^3N_k^{\beta+\alpha+1}}\nonumber\\
        &\lesssim\sum_{k\geq1}\frac{N_{k-1}^{\beta+\alpha+4}}{N_k}\sum_{l\in\mathbb{Z}^2\setminus\{0\}}\frac{1}{|l|^3}\lesssim\varepsilon',\label{1estimateI1}
    \end{align}
    where we use the fact that $|l-N_k\xi|\gtrsim \max\{|l|,N_k\}$ holds provided $|2^j-N_k|\geq\frac{3}{4}N_k$ and $\frac{3}{4}\cdot2^j\leq|l|\leq\frac{8}{3}\cdot 2^j$. We also have $\sum_{k\geq1}\frac{N_{k-1}^{\beta+\alpha+4}}{N_k}\lesssim\varepsilon'$ if the parameters $A$ and $b$ are chosen sufficiently large.
Similarly, if $j=-1$, we have
\begin{align}
    2^{\beta+\alpha+\varepsilon}\|I_1\|_{L^{\infty}(\mathbb{T}^2)}&\lesssim\sum_{k:|\frac{1}{2}-N_k|\geq\frac{3}{4}N_k} 2^{\beta+\alpha+\varepsilon}N_k^{\beta+\alpha}\sum_{|l|\leq\frac{4}{3}}|\hat{a}_{\xi,k}(l-N_k\xi)|\nonumber\\
    &\lesssim\sum_{k\geq1} N_k^{\beta+\alpha}\sum_{|l|\leq\frac{4}{3}}\frac{N_{k-1}^{\beta+\alpha+1}}{|l-N_k\xi|^{\beta+\alpha+1}}\nonumber\\
    &\lesssim \sum_{k\geq1} \frac{N_{k-1}^{\beta+\alpha+1}}{N_k}\lesssim \varepsilon',\label{2estimateI1}
\end{align}
where we use the estimate $|l-N_k\xi|\gtrsim N_k$ that is satisfied with $|\frac{1}{2}-N_k|\geq\frac{3}{4}N_k$ and $|l|\leq\frac{4}{3}$.
    
    We now turn to the estimate of $I_2$. Since the bound $\|a_{\xi,k}e^{2\pi iN_k\xi\cdot}\|_{L^{\infty}(\mathbb{T}^2)}\lesssim1$, we deduce
    \begin{align*}
        2^{-j(\beta+\alpha+\varepsilon)}\|I_2\|_{L^{\infty}(\mathbb{T}^2)}&\lesssim \sum_{k:|2^j-N_k|\leq\frac{3}{4}N_k} 2^{-j(\beta+\alpha+\varepsilon)}N_k^{\beta+\alpha}\|\Delta_j(a_{\xi,k}e^{2\pi iN_k\xi\cdot})\|_{L^{\infty}(\mathbb{T}^2)}\\
        &\lesssim \sup_{m\geq1}\frac{1}{N_m^\varepsilon}\sum_{k:|2^j-N_k|\leq\frac{3}{4}N_k} \|a_{\xi,k}e^{2\pi iN_k\xi\cdot}\|_{L^{\infty}(\mathbb{T}^2)}\\
        &\lesssim \sup_{m\geq1}\frac{1}{N_m^\varepsilon}\#\{K\in\mathbb{N}:|2^j-N_k|\leq\frac{3}{4}N_k\}.
    \end{align*}
    With a large enough choice of $A$ and $b$, it holds $\sup_{m\geq1}\frac{1}{N_m^\varepsilon}\leq\varepsilon'$ and 
    \begin{align*}
        \#\{K\in\mathbb{N}:|2^j-N_k|\leq\frac{3}{4}N_k\}\leq \log_{A^b}\frac{4\cdot2^j}{\frac{4}{7}\cdot2^j}+1\leq 2,
    \end{align*}
    which give the bound
    \begin{align}
        2^{-j(\beta+\alpha+\varepsilon)}\|I_2\|_{L^{\infty}(\mathbb{T}^2)}\lesssim\varepsilon'.\label{estimateI2}
    \end{align}
    Combining \eqref{1estimateI1},\eqref{2estimateI1} and \eqref{estimateI2} together, we get
    \begin{gather}
        \|\sum_{k\geq1}v^{0,p}_k\|_{B^{-\beta-\alpha-\varepsilon}_{\infty,\infty}(\mathbb{T}^2)}\lesssim\varepsilon'.\label{vopk}
    \end{gather}

    By a similar argument, we obtain
    \begin{align}
        \|v^0_0\|_{B^{-\beta-\alpha-\varepsilon}_{\infty,\infty}(\mathbb{T}^2)}\lesssim\varepsilon',\label{v00}\\
        \|\sum_{k\geq1}v^{0,e,1}_k\|_{B^{-\beta-\alpha-\varepsilon}_{\infty,\infty}(\mathbb{T}^2)}\lesssim\varepsilon'.\label{0e1}
    \end{align}
     Since $v^{0,e,1}_k$ denotes the part where derivatives are not all applied to $e^{2\pi iN_k\xi\cdot x}$, the sum $\sum_{k\geq1}v^{0,e,1}_k$ indeed enjoys smallness in stronger spaces, such as $B^{-\beta-\alpha+1-\lambda}_{\infty,\infty}$ for any $0<\lambda<1$.

      We finish by estimating $\sum_{k\geq1}v^{0,e,2}_k$. Recall that the length scale of the mollifier $\phi_k$ is $(N_{k+1}N_k)^{\frac{1}{2}}$. Applying the standard mollifier estimate and \eqref{maxik}, we have
      \begin{align}
          \|\sum_{k\geq1}v^{0,e,2}_k\|_{B^{-\beta-\alpha-\varepsilon}_{\infty,\infty}(\mathbb{T}^2)}&\lesssim \sum_{k\geq1}\|v^{0,e,2}_k\|_{L^\infty(\mathbb{T}^2)}\nonumber\\&\lesssim \sum_{k\geq1} N_k^{-2[\beta]-1+\beta+\alpha}\|\Delta^{[\beta]}\nabla^{\perp}\left(\sum_{\xi\in\Lambda}a_{\xi,k}e^{2\pi iN_k\xi\cdot} \right)*(\phi_k-\delta)\|_{L^\infty(\mathbb{T}^2)}\nonumber\\
          &\lesssim\sum_{k\geq1} N_k^{-2[\beta]-1+\beta+\alpha}\|\nabla^{2[\beta]+2}\left(\sum_{\xi\in\Lambda}a_{\xi,k}e^{2\pi iN_k\xi\cdot} \right)\|_{L^\infty(\mathbb{T}^2)}(N_{k+1}N_k)^{-\frac{1}{2}}\nonumber\\
          &\lesssim \sum_{k\geq1} \frac{N_k^{\frac{1}{2}+\beta+\alpha}}{N_{k+1}^{\frac{1}{2}}}\lesssim \varepsilon',\label{0e2}
      \end{align}
      where $\sum_{k\geq1} \frac{N_k^{\frac{1}{2}+\beta+\alpha}}{N_{k+1}^{\frac{1}{2}}}\lesssim\varepsilon'$ provided the parameters $A$ and $b$ are chosen sufficiently large. 

      Combining \eqref{vopk}-\eqref{0e2}and, without loss of generality, taking $\varepsilon'$ smaller if necessary, we conclude that
      \begin{align*}
          \|V^0\|_{B^{-\beta-\alpha-\varepsilon}_{\infty,\infty}(\mathbb{T}^2)}&\leq\|\sum_{k\geq1}v^{0,p}_k\|_{B^{-\beta-\alpha-\varepsilon}_{\infty,\infty}(\mathbb{T}^2)}+ \|v^0_0\|_{B^{-\beta-\alpha-\varepsilon}_{\infty,\infty}(\mathbb{T}^2)}\\&\quad\,\,+
        \|\sum_{k\geq1}v^{0,e,1}_k\|_{B^{-\beta-\alpha-\varepsilon}_{\infty,\infty}(\mathbb{T}^2)}+\|\sum_{k\geq1}v^{0,e,2}_k\|_{B^{-\beta-\alpha-\varepsilon}_{\infty,\infty}(\mathbb{T}^2)}\\&\leq \varepsilon'.
      \end{align*}
\end{proof}

\subsection{Construction of the principal parts }
\begin{defi}\label{defiofvk}
    For any $k\in\mathbb{N}$, we define
    \begin{align*}
        v_k&\triangleq v_k^0 e^{-(2\pi N_k)^{2\beta}t},\\
        \bar{v}_k&\triangleq v_k^0 e^{-2\cdot(2\pi N_{k+1})^{2\beta}t}.
    \end{align*}
    Then we define the principal parts of two solutions of (\ref{e:NSf})
    \begin{gather}
    v^{(1)}\triangleq\sum_{k\geq0\,even}v_k+\sum_{k\geq0\,odd}\bar{v}_k,\\
    v^{(2)}\triangleq\sum_{k\geq0\,odd}v_k+\sum_{k\geq0\,even}\bar{v}_k.
    \end{gather}
\end{defi}

\begin{prop}\label{estimate of mvi}
    Let $0<p<\frac{2\beta}{\beta+\alpha}$. The two vector fields $v^{(1)}$ and $v^{(2)}$ are divergence-free, belong to $L^p([0,1];L^\infty(\mathbb{T}^2))$ and obey the following bounds
    \begin{gather}
        \|\nabla^m v^{(i)}(t)\|_{L^{\infty}(\mathbb{T}^2)}\lesssim_m t^{-\frac{\beta+\alpha+m}{2\beta}},\label{mvi}
    \end{gather} 
    for all $m\geq0$, $t>0$.
    Moreover, given any $\epsilon_1,\epsilon_2>0$, there exist sufficiently large constants A and b such that
    \begin{gather}
        \| v^{(i)}(t)\|_{L^{\infty}(\mathbb{T}^2)}\leq \epsilon_1 t^{-\frac{\beta+\alpha+\epsilon_2}{2\beta}}.\label{mvi2}
    \end{gather}
\end{prop}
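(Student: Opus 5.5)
The plan is to reduce everything to the per-mode bounds of Proposition \ref{propofinitial} together with an elementary summation over $k$ of quantities of the form $N_k^{a}e^{-cN_k^{2\beta}t}$.

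\textbf{Divergence-free property.} Each $v_k$ and $\bar v_k$ is $v_k^0$ times a scalar function of $t$ alone. Since each $v_k^0$ is divergence-free, so is each term. Convergence of the series in every $C^m$ for $t>0$ (established below) then passes the property to $v^{(1)}$ and $v^{(2)}$.

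\textbf{Derivative bound \eqref{mvi}.} Both exponents satisfy $(2\pi N_k)^{2\beta}\le 2(2\pi N_{k+1})^{2\beta}$, so each term is bounded pointwise by $|\nabla^m v_k^0|\,e^{-cN_k^{2\beta}t}$ with $c=(2\pi)^{2\beta}$. By \eqref{mvk},
\begin{align*}
\|\nabla^m v^{(i)}(t)\|_{L^\infty}\lesssim_m \sum_{k\ge0}N_k^{\beta+\alpha+m}e^{-cN_k^{2\beta}t}.
\end{align*}
Set $x_k=N_k^{2\beta}t$ and $\theta=\frac{\beta+\alpha+m}{2\beta}$. Then each summand equals $t^{-\theta}x_k^{\theta}e^{-cx_k}$. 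Since $N_{k+1}=N_k^b$ with $b\ge2$, the sequence $x_k$ grows super-geometrically. Let $k_0$ be the first index with $x_{k_0}\ge1$.
\begin{itemize}
\item For $k\ge k_0$, we have $x_{k+1}\ge 2x_k$. Hence $\sum_{k\ge k_0}x_k^\theta e^{-cx_k}\lesssim_\theta \sup_{x\ge1}\sum_{j\ge0}(2^jx)^\theta e^{-c2^jx}\lesssim_\theta 1$.
\item For $k<k_0$, we have $x_k<1$ and $x_{k-1}\le x_k^{1/b}\cdot$ (powers decrease doubly exponentially backwards). Hence $\sum_{k<k_0}x_k^\theta\lesssim_\theta 1$ by comparison with a geometric series.
\end{itemize}
This gives \eqref{mvi}. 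For $m=0$ it also gives $\|v^{(i)}(t)\|_{L^\infty}\lesssim t^{-\frac{\beta+\alpha}{2\beta}}$, and $t^{-\frac{p(\beta+\alpha)}{2\beta}}$ is integrable on $[0,1]$ exactly when $p<\frac{2\beta}{\beta+\alpha}$. This yields membership in $L^p([0,1];L^\infty)$.

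\textbf{Refined bound \eqref{mvi2}.} This is the main obstacle, because the smallness $\epsilon_1$ must be extracted from the loss $t^{-\epsilon_2/(2\beta)}$. Run the same sum with $\theta'=\frac{\beta+\alpha+\epsilon_2}{2\beta}$:
\begin{align*}
\sum_k N_k^{\beta+\alpha}e^{-cN_k^{2\beta}t}=t^{-\theta'}\sum_k N_k^{-\epsilon_2}\,x_k^{\theta'}e^{-cx_k}.
\end{align*}
The implicit constant in \eqref{mvk} for $m=0$ is independent of $A$ and $b$. Since $N_k\ge N_0=A$, the sum is at most $C\,A^{-\epsilon_2}\sup_{t}\sum_k x_k^{\theta'}e^{-cx_k}\lesssim A^{-\epsilon_2}$, by the same two-regime summation as above, now with a constant uniform in $A$ and $b\ge2$. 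Choosing $A$ large enough that $CA^{-\epsilon_2}\le\epsilon_1$ gives \eqref{mvi2}.

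The one point that needs checking is that the constant in \eqref{mvk} for $m=0$ is genuinely uniform in $A$ and $b$. This holds because $\|a_{\xi,k}\|_{L^\infty}\lesssim1$ by Lemma \ref{geometric}, and the lower-order terms in the Leibniz expansion carry negative powers of $N_k/N_{k-1}$.
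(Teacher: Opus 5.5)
Your proposal is correct and follows the paper's argument: you dominate $\nabla^m v^{(i)}$ by $\sum_k N_k^{\beta+\alpha+m}e^{-cN_k^{2\beta}t}$ via \eqref{mvk}, bound that lacunary series by $t^{-\frac{\beta+\alpha+m}{2\beta}}$, and obtain \eqref{mvi2} by pulling out $\sup_k N_k^{-\epsilon_2}\le A^{-\epsilon_2}$. Your two-regime summation is a rigorous version of the paper's ``dominated by the term with $N_k\approx t^{-1/(2\beta)}$'' remark, and it only needs $x_{k+1}\ge 2x_k$, which holds for every $k$. The one small deviation is that you get $L^p([0,1];L^\infty)$ membership by integrating the $m=0$ case of \eqref{mvi}, whereas the paper sums the per-mode bounds $N_k^{\beta+\alpha}\|e^{-N_k^{2\beta}t}\|_{L^p}\lesssim N_k^{\beta+\alpha-2\beta/p}$; your version is slightly cleaner, since it needs no (quasi-)triangle inequality when $p<1$.
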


\begin{proof}
   The divergence-free property of $v^{(1)}$ and $v^{(2)}$ is inherited from $v^0_k$. 
   
   Using \eqref{mvk}, we have
   \begin{align}
       \|\nabla^m v^{(i)}(t)\|_{L^{\infty}(\mathbb{T}^2)}\lesssim\sum_{k\in\mathbb{N}}N_k^{\beta+\alpha+m} e^{-N_k^{2\beta}t}.\label{nablamvi}
   \end{align}
   We analyze the series function on the right-hand side. As $N_k$ grows rapidly, the series is dominated by its largest term. For $t\leq N_0^{-2\beta}$, the dominant contribution occurs near $N_k=t^{-\frac{1}{2\beta}}$, so the series is bounded by $O(t^{-\frac{\beta+\alpha+m}{2\beta}})$. For $t>N_0^{-2\beta}$, the summands decrease monotonically, yielding the upper bound $N_0^{\beta+\alpha+m}e^{-N_0^{2\beta}t}=O_m(t^{-\frac{\beta+\alpha+m}{2\beta}}e^{-C_mN_0^{2\beta}t})$. Therefore, it yields \eqref{mvi}. Similarly, 
   \begin{gather*}
       \| v^{(i)}(t)\|_{L^{\infty}(\mathbb{T}^2)}\lesssim\sum_{k\in\mathbb{N}}N_k^{\beta+\alpha} e^{-N_k^{2\beta}t}\lesssim\sup_{k\in\mathbb{N}}\frac{1}{N_k^{\epsilon_2}}\sum_{k\in\mathbb{N}}N_k^{\beta+\alpha+\epsilon_2} e^{-N_k^{2\beta}t}\leq \epsilon_1 t^{-\frac{\beta+\alpha+\epsilon_2}{2\beta}}.
   \end{gather*}

Applying \eqref{mvk} again and using $0<p<\frac{2\beta}{\beta+\alpha}$, one infers
\begin{gather*}
    \|v^{(i)}\|_{L^p([0,1];L^\infty(\mathbb{T}^2))}\lesssim\sum_{k\in\mathbb{N}}N_k^{\beta+\alpha}\|e^{-N_k^{2\beta}t}\|_{L^p([0,1])}\lesssim\sum_{k\in\mathbb{N}}N_k^{\beta+\alpha-\frac{2\beta}{p}}\lesssim1.
\end{gather*}

\end{proof}

   \subsection{Estimates on the residual}
  For $i\in\{1,2\}$, we define the residual $F^{(i)}\in C^\infty((0,1]\times\mathbb{T}^2;S^{2\times2})$ in divergence form and $\widetilde{F}^{(i)}\in C^\infty((0,1]\times\mathbb{T}^2;\mathbb{R}^2)$ in Laplace form, satisfying
\begin{gather}
    -\mathbb{P} \operatorname{div}F^{(i)}-\Delta^{[\beta]}\widetilde{F}^{(i)}=\partial_tv^{(i)}+(-\Delta)^{\beta}v^{(i)}+\mathbb{P} \operatorname{div}(v^{(i)}\otimes v^{(i)}).\label{defiF}
\end{gather}

Before defining $F^{(i)}$ and $\widetilde{F}^{(i)}$, we first split $v_k$. Recalling \eqref{defiv0p}-\eqref{defiv0e2}, for any $k\geq1$, we decompose $v_k=v^p_k+v^{e}_k$ where
    \begin{align}
        v^p_k&\triangleq v^{0,p}_ke^{-(2\pi N_k)^{2\beta}t},\label{defivpk}\\
        v^{e}_k&\triangleq (v^{0,e,1}_k+v^{0,e,2}_k)e^{-(2\pi N_k)^{2\beta}t}.
    \end{align}
and for $k=0$, we let
\begin{align*}
    v_0^p=v_0 \quad \mathrm{and} \quad v^{e}_0=0.
\end{align*}
We then set the principal part of $v^{(1)}$ by 
\begin{align}
    v^p\triangleq\sum_{k\geq0,even}v^p_k,\label{defivp}
\end{align}
and the error part of $v^{(1)}$ by
\begin{align}
\widetilde{v}^e\triangleq\sum_{k\geq1\,odd}\bar{v}_k+\sum_{k\geq0\,even}v^{e}_k. \label{defitidleve}
\end{align}
Therefore, we can split $v^{(1)}$ as 
\begin{align}
    v^{(1)}=v^p+\widetilde{v}^e,\label{decomposev1}
\end{align}
and $v^{(2)}$ admits a similar decomposition.

We define $F^{(1)}$ and $\widetilde{F}^{(1)}$ explicitly. The definitions of $F^{(2)}$ and $\widetilde{F}^{(2)}$ are similar and will therefore be omitted.
   Recall the differential operator $S$ in Definition \ref{defiofS}. For any $k\in\mathbb{N}$, we define the tensor fields 
\begin{align*}
    \bar{R}_k\triangleq S(\Delta^{-1} v_k^0)e^{-2\cdot(2\pi N_{k+1})^{2\beta}t}.
\end{align*}
Thus, by \eqref{divS} and Definition \ref{defiofvk}, it holds
\begin{gather} \operatorname{div}\bar{R}_k=\bar{v}_k.\label{divbarR}
\end{gather}

Let us define the residual term $F^{(1)}$ satisfying
\begin{gather*}
    F^{(1)}\triangleq F^{(1)}_{1}+F^{(1)}_{2}+F^{(1)}_{3},
\end{gather*}
where
the linear dissipation error of $\bar{v}_k$:
\begin{align*}
    F^{(1)}_{1}\triangleq-\sum_{k\geq1,odd} (-\Delta)^{\beta}\bar{R}_k,
\end{align*}
the oscillation error:
\begin{align*}
    F^{(1)}_{2}\triangleq-\sum_{k\geq1,odd}\mathcal{R}\left(\partial_t\bar{v}_k+\mathbb{P}\operatorname{div}(v^p_{k+1}\otimes v^p_{k+1})\right),
\end{align*}
and the corrector error:
\begin{align*}
    F^{(1)}_{3}\triangleq-v^{(1)}\otimes v^{(1)}+\sum_{k\geq0,even}v^p_{k}\otimes v^p_{k}.
\end{align*}
Here, $\{\nabla p_k\}_{k\geq1}$ denote some pressure terms, which will be defined later. 

   Next, we define the linear error associated with $v_k$, denoted by $\widetilde{F}^{(1)}$
   \begin{gather}
       \widetilde{F}^{(1)}\triangleq\sum_{k\geq2,even}C_\beta N_k^{-2[\beta]-1+\beta+\alpha}\nabla^{\perp}\left(\sum_{\xi\in\Lambda}[a_{\xi,k},(-\Delta)^{\beta}]e^{2\pi iN_k\xi\cdot x} \right)*\phi_ke^{-(2\pi N_k)^{2\beta}t}.\label{defitidleF1}
   \end{gather}

Finally, we verify that $F^{(1)}$ and $\widetilde{F}^{(1)}$, defined above, satisfy \eqref{defiF}. A direct computation and \eqref{divbarR} show that
\begin{align}
     \mathbb{P}\operatorname{div}F^{(1)}_{1}=-(-\Delta)^{\beta}\sum_{k\geq1\,odd} \bar{v}_k.\label{J1}
\end{align}
Since $\operatorname{div}(v_0\otimes v_0)=0$, we have
\begin{align}
    \mathbb{P}\operatorname{div}F^{(1)}_{2}&=-\sum_{k\geq1,odd}\left(\partial_t \bar{v}_k+\mathbb{P}\operatorname{div}(v^p_{k+1}\otimes v^p_{k+1})\right)\nonumber\\&=-\partial_t\sum_{k\geq1,odd} \bar{v}_k-\mathbb{P}\operatorname{div}\left(\sum_{k\geq0,even}v^p_{k}\otimes v^p_{k}\right),
\end{align}
and
\begin{align}
    \mathbb{P}\operatorname{div}F^{(1)}_{3}=-\mathbb{P}\operatorname{div}(v^{(1)}\otimes v^{(1)})+\mathbb{P}\operatorname{div}\left(\sum_{k\geq0,even}v^p_{k}\otimes v^p_{k}\right).
\end{align}
Moreover, by Definition \ref{Apropdefivk} and Definition \ref{defiofvk}, together with the fact that $\{e^{2\pi iN_kx}e^{-(2\pi N_k)^{2\beta}t}\}_{k\geq1}$ and $v_0$ are solutions to the heat equation 
\begin{gather*}
    \left(\partial_t+(-\Delta)^{\beta}\right)u=0,
\end{gather*}
It follows that
\begin{align}
    \Delta^{[\beta]}\widetilde{F}^{(1)}=-\left(\partial_t+(-\Delta)^{\beta}\right)\sum_{k\geq0,even}v_k.\label{J3}
\end{align}
Combining \eqref{J1}-\eqref{J3} and recalling $v^{(1)}=\sum_{k\geq0,even}v_k+\sum_{k\geq1,odd}\bar{v}_k$, we conclude that \eqref{defiF} holds.

Recall the parameter $\gamma\in(0,\min\{\frac{\beta-1-\alpha}{2},\frac{1-2\alpha}{4}\}]$, which are introduced to measure the subcriticality of the remaining terms. In the following proposition, we prove that $\{F^{i}\}_{i\in\{0,1\}}$ and $\{\widetilde{F}_i\}_{i\in\{0,1\}}$ obey certain subcritical estimates.

\begin{prop}\label{Prop 4.1}
     Let $\eta>0$ and $i\in\{1,2\}$. For all $t\in(0,1]$, $F^{(i)}$ and $\widetilde{F}^{(i)}$ satisfy
    \begin{gather}
        \|F^{(i)}(t)\|_{L^\infty(\mathbb{T}^2)}\leq \eta t^{-\frac{2\beta+2\alpha-1+\gamma}{2\beta}},
    \end{gather}
    and
    \begin{align}
        \|\nabla^m\widetilde{F}^{(i)}(t)\|_{L^\infty(\mathbb{T}^2)}\leq \eta t^{-\frac{3\beta+\alpha-1+\gamma-2[\beta]+m}{2\beta}},\quad\forall m\in\{0,1\},
    \end{align}
    provided that $A$ and $b$ are sufficiently large. 
\end{prop}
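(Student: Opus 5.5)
Each piece of $F^{(1)}=F^{(1)}_1+F^{(1)}_2+F^{(1)}_3$ and of $\widetilde F^{(1)}$ is estimated separately, always with the following conversion. Every summand has the form $N_k^{a}$ or $N_k^{a}N_{k+1}^{c}$, times either $e^{-cN_k^{2\beta}t}$ or $e^{-cN_{k+1}^{2\beta}t}$. We bound it by $\eta\, t^{-\theta}$ using $\sup_t t^{\theta}e^{-cN^{2\beta}t}\lesssim N^{-2\beta\theta}$. Then we sum over $k$, exploiting the super-geometric growth $N_{k+1}=N_k^b$: each summand gains a factor $N_k^{-\gamma'}$ with $\gamma'>0$, and with $A,b$ large this makes the sum smaller than $\eta$. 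The target exponents are $\theta=\frac{2\beta+2\alpha-1+\gamma}{2\beta}$ for $F$ and $\theta=\frac{3\beta+\alpha-1+\gamma-2[\beta]+m}{2\beta}$ for $\nabla^m\widetilde F$. Throughout we use \eqref{mvk}, \eqref{maxik} and \eqref{LinftyAphik2}; the case $i=2$ is identical.

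\emph{Linear terms.} For $F^{(1)}_1$ we have $\|(-\Delta)^\beta\bar R_k\|_{L^\infty}\lesssim N_k^{2\beta+\beta+\alpha-1}e^{-2(2\pi N_{k+1})^{2\beta}t}$, since $S\Delta^{-1}$ has order $-1$ and $v_k^0$ has frequency $\sim N_k$. Because the decay is at rate $N_{k+1}^{2\beta}$, this is bounded by $N_k^{3\beta+\alpha-1}N_{k+1}^{-(2\beta+2\alpha-1+\gamma)}t^{-\theta}$, which is tiny once $b$ is large. For $\widetilde F^{(1)}$ we use a fractional Leibniz/commutator estimate (from the appendix). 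The commutator $[a_{\xi,k},(-\Delta)^\beta]e^{2\pi iN_k\xi\cdot x}$ is bounded by $N_{k-1}N_k^{2\beta-1}$, up to negligible tails that we control with higher derivatives of $a_{\xi,k}$, since the Fourier support of $a_{\xi,k}$ is effectively at scale $N_{k-1}\ll N_k$. Hence $\|\nabla^m\widetilde F^{(1)}\|_{L^\infty}\lesssim \sum_k N_{k-1}N_k^{-2[\beta]+\beta+\alpha+2\beta-1+m}e^{-cN_k^{2\beta}t}$. Comparing with $t^{-\theta}$ leaves the factor $N_{k-1}N_k^{-\gamma}$. This is small because $N_{k-1}=N_k^{1/b}$ and $\gamma>1/b$ for $b$ large.

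\emph{Corrector error.} Expanding $F^{(1)}_3$ gives three kinds of products. The first is cross terms $v_k\otimes v_{k'}$ with $k\neq k'$ both even. The second is products involving $v^e_k$, which is small by the factor $N_{k-1}/N_k$ or by the mollification error $(N_k/N_{k+1})^{1/2}N_k$. The third is products involving $\bar v_k$. For a cross term with $k<k'$ we bound by $N_k^{\beta+\alpha}N_{k'}^{\beta+\alpha}e^{-cN_{k'}^{2\beta}t}$. This has the gain $N_k^{\beta+\alpha}N_{k'}^{-(\beta+\alpha-1+\gamma)}$, which is small since $\beta+\alpha-1+\gamma>0$ and $k'>k$. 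Products containing $\bar v_k$ decay like $e^{-cN_{k+1}^{2\beta}t}$ while having size at most $N_{k+1}^{\beta+\alpha}N_k^{\beta+\alpha}$, so the same gain appears. Summing the double series is routine with the superexponential growth of $N_k$.

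\emph{Oscillation error $F^{(1)}_2$ (main obstacle).} Here the Geometric Lemma must be used. By \eqref{defiv0p}, $v^p_{k+1}\otimes v^p_{k+1}$ has low-frequency part $2(2\pi)^{2\beta}N_{k+1}^{2\beta+2\alpha}e^{-2(2\pi N_{k+1})^{2\beta}t}\sum_\xi a_{\xi,k+1}^2\bar\xi\otimes\bar\xi$, taking real parts with $\xi$ paired with $-\xi$. By Lemma \ref{geometric} this equals a constant times $\left(\mathrm{Id}+N_{k+1}^{-2\alpha}S(\Delta^{-1}v^0_k)\right)$. The identity part is killed by $\mathbb P\operatorname{div}$. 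The $S$ part gives $(2\pi)^{2\beta}\cdot 2N_{k+1}^{2\beta}\bar v_k$, which cancels exactly with $\partial_t\bar v_k$; this is why the constant is $2(2\pi N_{k+1})^{2\beta}$. What remains are the high-frequency interactions $e^{2\pi iN_{k+1}(\xi+\xi')\cdot x}$ with $\xi+\xi'\neq0$. In two dimensions the modes with different orientations interact, and this is the delicate point. Following \cite{miao2026nonuniquenesssmoothsolutionsnavierstokes}, we rewrite $(\bar\xi\otimes\bar\xi')$-type terms, modulo a gradient absorbed into the pressure $p_k$, so that their divergence only involves derivatives falling on $a_{\xi,k+1}$, of size $N_k$. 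Applying $\mathcal R$ (order $-1$ at frequency $\sim N_{k+1}$) then gives a bound $N_{k+1}^{2\beta+2\alpha}(N_k/N_{k+1})e^{-cN_{k+1}^{2\beta}t}$, which yields the gain $N_kN_{k+1}^{-(1-\gamma)}$ relative to $t^{-\theta}$. The mollifier and $v^{0,e,1}$ discrepancies between $v^p$ and the true amplitude are handled as in the corrector step. I expect verifying this 2D pressure-absorption structure, and the stationary-phase bound for $\mathcal R$ applied to $a\,e^{iN\zeta\cdot x}$, to be the hardest step.
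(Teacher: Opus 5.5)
Your proposal is correct and follows essentially the paper's proof. It uses the same split into $F^{(1)}_1,F^{(1)}_2,F^{(1)}_3,\widetilde F^{(1)}$, the Geometric Lemma cancellation of the low-frequency part of $v^p_{k+1}\otimes v^p_{k+1}$ against $\partial_t\bar v_k$, the 2D stationary-flow lemma sending the rest into pressure plus amplitude-derivative terms, the stationary-phase bound for $\mathcal R$, the appendix commutator estimate for $\widetilde F^{(1)}$, and the conversion $\sup_t t^{\theta}e^{-cN^{2\beta}t}\lesssim N^{-2\beta\theta}$ together with $N_{k+1}=N_k^b$. A few bookkeeping slips should be fixed, though all estimates still close once $b\gamma$ is large:
\begin{itemize}
\item In $F^{(1)}_2$ the gain relative to $t^{-\theta}$ is $N_kN_{k+1}^{-\gamma}$, not $N_kN_{k+1}^{-(1-\gamma)}$. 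It becomes $N_kN_{k+1}^{-\gamma/2}$ once you pass through $C^{\gamma/2}$, which the paper does because $\mathbb P$ is not bounded on $L^\infty$.
\item The mollification error has relative size $(N_k/N_{k+1})^{1/2}$, without the extra factor $N_k$.
\item The appendix commutator estimate gives $N_k^{2\beta-1}N_{k-1}^4$, not your sharper $N_k^{2\beta-1}N_{k-1}$; the sharper bound would need a first-order Taylor expansion of the symbol.
\end{itemize}
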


\begin{proof}
    Recalling the definition of $F^{(1)}$ and $\widetilde{F}^{(1)}$ above, we proceed to verify that $F^{(1)}$ and $\widetilde{F}^{(1)}$ satisfy the desired estimates. The arguments for $F^{(2)}$ and $\widetilde{F}^{(2)}$ are analogous and will be omitted.

\textbf{Estimate of $\boldsymbol{F^{(1)}_{1}}$}. Applying Proposition \ref{propofinitial}, we calculate directly
\begin{align*}
     \|F^{(1)}_{1}(t)\|_{L^\infty(\mathbb{T}^2)}&\lesssim\sum_{k\geq1\,odd}N_k^{3\beta+\alpha-1}e^{-N_{k+1}^{2\beta}t}\\
     &\lesssim\sum_{k\in\mathbb{N}}N_{k}^{\frac{3\beta+\alpha-1}{b}}e^{-N_{k}^{2\beta}t}.
\end{align*}
Hence, we easily deduce
\begin{gather}
    \|F^{(1)}_{1}(t)\|_{L^\infty(\mathbb{T}^2)}\lesssim t^{-\frac{2\beta+2\alpha-1+\gamma}{2\beta}}\sup_{k\in\mathbb{N}}N_k^{\frac{3\beta+\alpha-1}{b}-(2\beta+2\alpha-1+\gamma)} \lesssim\eta t^{-\frac{2\beta+2\alpha-1+\gamma}{2\beta}},\label{F12}
\end{gather}
where $A$ and $b$ are chosen to be large enough.

\textbf{Estimate of $\boldsymbol{F^{(1)}_{2}}$}. Recalling \eqref{defiv0p} and \eqref{defivpk}, we have
\begin{align}
    \operatorname{div}(v^p_k\otimes v^p_k)=-\operatorname{div}\left(\sum_{\xi_1,\xi_2\in\Lambda}a_{\xi_1,k}a_{\xi_2,k}e^{2\pi iN_k(\xi_1+\xi_2)\cdot x}\bar{\xi}_1\otimes\bar{\xi}_2\right)2\cdot(2\pi N
    _k)^{2\beta}N_k^{2\alpha}e^{-2\cdot(2\pi N_k)^{2\beta}t}.\label{PP1}
\end{align}
Applying Lemma \ref{2dlemma} and the product rule, we have
\begin{align}
    &\quad\,\operatorname{div}\left(\sum_{\xi_1,\xi_2\in\Lambda}a_{\xi_1,k}a_{\xi_2,k}e^{2\pi iN_k(\xi_1+\xi_2)\cdot x}\bar{\xi}_1\otimes\bar{\xi}_2\right)\nonumber\\&=-\sum_{\xi\in\Lambda}\operatorname{div}(a_{\xi,k}^2\bar{\xi}\otimes\bar{\xi})+\sum_{\xi_1\neq-\xi_2\in\Lambda}\operatorname{div}(a_{\xi_1,k}a_{\xi_2,k}\bar{\xi}_1\otimes\bar{\xi}_2)e^{2\pi iN_k(\xi_1+\xi_2)x}\nonumber
    \\&\quad\,-\frac{1}{2}\nabla(|\sum_{\xi\in\Lambda}a_{\xi,k}e^{2\pi iN_k\xi\cdot x}\bar{\xi}|^2-|\sum_{\xi\in\Lambda}a_{\xi,k}e^{2\pi iN_k\xi\cdot x}|^2)\nonumber\\
    &\quad\,+\frac{1}{2}\sum_{\xi_1\neq\xi_2\in\Lambda}\nabla(a_{\xi_1,k}a_{\xi_2,k})(\bar{\xi}_1\cdot\bar{\xi}_2-1)e^{2\pi iN_k(\xi_1-\xi_2)x}.\label{PP2}
\end{align}
Moreover, using \eqref{defiaxik} and Lemma \ref{geometric}, we have
\begin{align}
    \sum_{\xi\in\Lambda}\operatorname{div}(a_{\xi,k}^2\bar{\xi}\otimes\bar{\xi})=\operatorname{div}(\frac{S(\Delta^{-1}v_{k-1}^0)}{N_k^{2\alpha}}+\mathrm{Id})=\frac{v^0_{k-1}}{N_k^{2\alpha}}.\label{Adiv1}
\end{align}
Therefore, with Definition \ref{defiofvk} and \eqref{Adiv1}, we have
\begin{gather}
\partial_t\bar{v}_k-\mathbb{P}\operatorname{div}\left(\sum_{\xi\in\Lambda}a_{\xi,k+1}^2\bar{\xi}\otimes\bar{\xi}\right)\cdot2(2\pi N_{k+1})^{2\beta}N_{k+1}^{2\alpha}e^{-2\cdot(2\pi N_{k+1})^{2\beta}t}=0.\label{PP3}
\end{gather}
Combining \eqref{PP1}-\eqref{PP3} together, we simplify $F^{(1)}_{2}$ as follows
\begin{align*}
    F^{(1)}_{2}&=-\sum_{k\geq1\,odd}\mathcal{R}\left(\partial_t\bar{v}_k+\mathbb{P}\operatorname{div}(v^p_{k+1}\otimes v^p_{k+1})\right)\\
    &=\mathcal{R}\mathbb{P}\Big(\sum_{k\geq1\,odd}\sum_{\xi_1\neq-\xi_2\in\Lambda}\operatorname{div}(a_{\xi_1,{k+1}}a_{\xi_2,{k+1}}\bar{\xi}_1\otimes\bar{\xi}_2)e^{2\pi iN_{k+1}(\xi_1+\xi_2)x}\\
    &\quad+\sum_{k\geq1\,odd}\sum_{\xi_1\neq\xi_2\in\Lambda}\frac{1}{2}\nabla(a_{\xi_1,{k+1}}a_{\xi_2,{k+1}})(\bar{\xi}_1\cdot\bar{\xi}_2-1)e^{2\pi iN_{k+1}(\xi_1-\xi_2)x}\Big)\\&\quad\quad\cdot 2(2\pi N_{k+1})^{2\beta}N_{k+1}^{2\alpha}e^{-2\cdot(2\pi N_{k+1})^{2\beta}t},
\end{align*}
where the pressure term is eliminated by the identity $\mathbb{P}\nabla p=0$.

 Suppose $\{b_k\}_{k\geq1}$ are smooth functions satisfying $\|\nabla^mb_k\|_{L^\infty}\lesssim N_{k-1}^m$ for any $m\in\mathbb{N}$ and Let $\xi\in\mathbb{Z}^2\setminus\{0\}$. Without loss of generality, it suffices to show that
 \begin{gather*}
     \|\sum_{k\geq1}\mathcal{R}\mathbb{P}(\nabla b_ke^{2\pi iN_k\xi\cdot x})N_k^{2\beta+2\alpha}e^{-(2\pi N_k)^{2\beta}t}\|_{L^{\infty}(\mathbb{T}^2)}\lesssim\eta t^{-\frac{2\beta+2\alpha-1+\gamma}{2\beta}}.
 \end{gather*}
By the boundedness of $\mathbb{P}$ on $C^\mu(\mathbb{T}^2)$ for $\mu\in(0,1)$ and \eqref{eR}, with $m$ chosen sufficiently large, we obtain, for any $k\geq1$,
\begin{gather*}
    \|\mathcal{R}\mathbb{P}(\nabla b_ke^{2\pi iN_k\xi\cdot x})\|_{C^\mu(\mathbb{T}^2)}\lesssim N_k^{-1+\mu}N_{k-1}+N_k^{-m+\mu}N_{k-1}^{m+1}+N_k^{-m}N_{k-1}^{m+\mu+1}\lesssim N_k^{-1+\mu}N_{k-1}.
\end{gather*}
Thus, setting $\mu=\frac{\gamma}{2}$ and choosing $A,b$ sufficiently large, we arrive at
\begin{align}
    \|\sum_{k\geq1}\mathcal{R}\mathbb{P}(\nabla b_ke^{2\pi iN_k\xi\cdot x})N_k^{2\beta+2\alpha}e^{-(2\pi N_k)^{2\beta}t}\|_{C^\mu(\mathbb{T}^2)}&\lesssim\sum_{k\geq1}N_k^{2\beta+2\alpha-1+\mu}N_{k-1}e^{-(N_k)^{2\beta}t}\nonumber\\
    &\lesssim \sup_{k}N_k^{\frac{1}{b}-\frac{\gamma}{2}}t^{-\frac{2\beta+2\alpha-1+\gamma}{2\beta}}\nonumber\\
    &\lesssim \eta t^{-\frac{2\beta+2\alpha-1+\gamma}{2\beta}},\label{F131}
\end{align}
where we first choose $b$ sufficiently large again so that $\frac{1}{b}-\frac{\gamma}{2}<0$ and then take $A$ sufficiently large. By \eqref{F131}, we get the desired result 
\begin{gather}
    \|F^{(1)}_{2}(t)\|_{L^{\infty}(\mathbb{T}^2)}\lesssim\eta t^{-\frac{2\beta+2\alpha-1+\gamma}{2\beta}}.\label{AF13}
\end{gather}

\textbf{Estimate of $\boldsymbol{F^{(1)}_{3}}$}. Recall the definition of $v^p_k$, $v^{e}_k$, $v^p$ and $\widetilde{v}^e$ in \eqref{defivpk}-\eqref{defitidleve}. 
Using \eqref{maxik}, the standard mollifier estimate and the Leibniz rule, we have
\begin{align}
    \| v^p_k\|_{_{L^\infty(\mathbb{T}^2)}}&\lesssim N_k^{\beta+\alpha}e^{-N_k^{2\beta}t},\label{mvpk}\\
     \| v^{e}_k\|_{_{L^\infty(\mathbb{T}^2)}}&\lesssim (\frac{N_{k-1}}{N_k}+\frac{N_{k}^{\frac{1}{2}}}{N_{k+1}^{\frac{1}{2}}})N_k^{\beta+\alpha}e^{-N_k^{2\beta}t},\label{mvek}\\
     \| \bar{v}_k\|_{_{L^\infty(\mathbb{T}^2)}}&\lesssim N_k^{\beta+\alpha}e^{-N_{k+1}^{2\beta}t},\label{mbarvk}
\end{align}
for all $k\in\mathbb{N}$. 

 Due to \eqref{mvpk}, we know that $v^p$ admits the same estimate as $v^{(1)}$,
 \begin{gather}
     \| v^p(t)\|_{L^\infty(\mathbb{T}^2)}\lesssim t^{-\frac{\beta+\alpha}{2\beta}}.\label{mvp}
 \end{gather}
 \eqref{mvek} and \eqref{mbarvk} imply
\begin{align*}
    \|\widetilde{v}^e(t)\|_{L^\infty(\mathbb{T}^2)}&\lesssim \sum_{k\geq1}(\frac{N_{k-1}}{N_k}+\frac{N_{k}^{\frac{1}{2}}}{N_{k+1}^{\frac{1}{2}}})N_k^{\beta+\alpha}e^{-N_k^{2\beta}t}+N_k^{\beta+\alpha}e^{-N_{k+1}^{2\beta}t}\\
    &\lesssim\sum_{k\in\mathbb{N}}
    \left(N_k^{1/b-1}+N_k^{1/2-b/2}+N_k^{(\beta+\alpha)(\frac{1}{b}-1)}\right)N_k^{\beta+\alpha}e^{-N_k^{2\beta}t}\\
    &\lesssim \sum_{k\in\mathbb{N}} N_k^{1/b-1}N_k^{\beta+\alpha}e^{-N_k^{2\beta}t}.
\end{align*}
 Therefore, we deduce
 \begin{align}
     \|\widetilde{v}^e(t)\|_{L^\infty(\mathbb{T}^2)}\lesssim t^{-\frac{\beta+\alpha-1+\gamma}{2\beta}}\sup_{k\in\mathbb{N}}{N_k^{1/b-\gamma}}\lesssim\eta t^{-\frac{\beta+\alpha-1+\gamma}{2\beta}},\label{widevx}
  \end{align}
where we first take $b$ sufficiently large such that $1/b-\gamma<0$ and then choose $A$ large enough.

In view of the decomposition of $v^{(1)}$ in \eqref{decomposev1}, we spilt $F^{(1)}_{3}$ as
\begin{align*}
    F^{(1)}_{3}&=-v^{(1)}\otimes v^{(1)}+\sum_{k\geq0\,even}v^p_{k}\otimes v^p_{k}\\&=-\sum_{k\neq j\geq0\,even}v^p_{k}\otimes v^p_{j}-v^{(1)}\otimes\widetilde{v}^e-\widetilde{v}^e\otimes v^p.
\end{align*}
Applying \eqref{mvpk}, we obtain
\begin{align*}
    \|\sum_{k\neq j}v^p_{k}\otimes v^p_{j}\|_{L^\infty(\mathbb{T}^2)}&\lesssim\sum_{k\geq 1,j<k}\| (v^p_{k}\otimes v^p_{j})\|_{L^\infty(\mathbb{T}^2)}\\
     &\lesssim\sum_{k\geq1}N_k^{\beta+\alpha}e^{-N_k^{2\beta}t}\sum_{j<k}N_j^{\beta+\alpha}\\
     &\lesssim\sum_{k\geq1}N_k^{\beta+\alpha}N_{k-1}e^{-N_k^{2\beta}t}
     \\ &\lesssim\sum_{k\geq1}N_k^{(\beta+\alpha)(\frac{1}{b}+1)}e^{-N_k^{2\beta}t}.
\end{align*}
Thus, with $A,b$ chosen large enough, it follows 
\begin{align}
    \|\sum_{k\neq j}v^p_{k}\otimes v^p_{j}\|_{L^\infty(\mathbb{T}^2)}\lesssim t^{-\frac{2\beta+\alpha-1+\gamma}{2\beta}}\sup_{k\in\mathbb{N}}N_k^{(\beta+\alpha)(\frac{1}{b}+1)-(2\beta+2\alpha-1+\gamma)}
    \lesssim\eta t^{-\frac{2\beta+2\alpha-1+\gamma}{2\beta}},\label{F41}
\end{align}
where we have used the fact that $\beta>1$.
By virtue of \eqref{mvi}, \eqref{mvp} and \eqref{widevx}, we deduce
\begin{align}
    \|v^{(1)}\otimes\widetilde{v}^e+\widetilde{v}^e\otimes v^p\|_{L^{\infty}(\mathbb{T}^2)}&\lesssim \eta t^{-\frac{\beta+\alpha-1+\gamma}{2\beta}}\cdot t^{-\frac{\beta+\alpha}{2\beta}}\lesssim \eta t^{-\frac{2\beta+2\alpha-1+\gamma}{2\beta}},\label{F42}
\end{align}
Finally, \eqref{F41} and \eqref{F42} yield
\begin{gather}
    \|F^{(1)}_{3}(t)\|_{L^{\infty}(\mathbb{T}^2)}\lesssim\eta t^{-\frac{2\beta+2\alpha-1+\gamma}{2\beta}}.\label{F14}
\end{gather}

\textbf{Estimate of $\boldsymbol{F^{(1)}}$}.
Combining \eqref{F12}, \eqref{AF13} and \eqref{F14}, and taking a new $\eta$ small enough to absorb all constants, we conclude that
\begin{gather*}
    \|F^{(1)}(t)\|_{L^{\infty}(\mathbb{T}^2)}\leq \eta t^{-\frac{2\beta+2\alpha-1+\gamma}{2\beta}}.
\end{gather*}

\textbf{Estimate of $\boldsymbol{\widetilde{F}^{(1)}}$}. Recall the definition of $\widetilde{F}^{(1)}$ in \eqref{defitidleF1},
\begin{align*}
   \widetilde{F}^{(1)}=\sum_{k\geq2,even}C_\beta N_k^{-2[\beta]-1+\beta+\alpha}\nabla^{\perp}\left(\sum_{\xi\in\Lambda}[a_{\xi,k},(-\Delta)^{\beta}]e^{2\pi iN_k\xi\cdot x} \right)*\phi_ke^{-(2\pi N_k)^{2\beta}t}
\end{align*}
 Thanks to \eqref{maxik}, \eqref{commuator} and the standard mollifier estimates, for $m\in\{1,2\}$, one infers 
\begin{align*}
    \|\nabla^m\widetilde{F}^{(1)}(t)\|_{L^\infty(\mathbb{T}^2)}&\lesssim\sum_{k\geq1}\sum_{\xi\in\Lambda}
    N_k^{-2[\beta]-1+\beta+\alpha}\|\nabla^{1+m}[(-\Delta)^{\beta}, a_{\xi,k}]e^{2\pi iN_k\xi\cdot}\|_{L^\infty(\mathbb{T}^2)}e^{-(2\pi N_k)^{2\beta}t}\\
    &\lesssim\sum_{k\geq1}N_k^{-2[\beta]-1+\beta+\alpha}(N_k+N_{k-1})^{1+m}\left(N_k^{2\beta-1}N_{k-1}^4+N_{k-1}^{2\beta+3}\right)e^{- N_k^{2\beta}t}\\
     &\lesssim\sum_{k\in\mathbb{N}}N_k^{3\beta+\alpha-1-2[\beta]+\frac{4}{b}+m}e^{- N_k^{2\beta}t}.
\end{align*}
Therefore, for $m\in\{1,2\}$, we obtain
\begin{gather*}
     \|\nabla^m\widetilde{F}^{(1)}(t)\|_{L^\infty(\mathbb{T}^2)}\lesssim\sup_{k}N_k^{\frac{4}{b}-\gamma}t^{-\frac{3\beta+\alpha-1+\gamma-2[\beta]+m}{2\beta}}\leq\eta t^{-\frac{3\beta+\alpha-1+\gamma-2[\beta]+m}{2\beta}},
\end{gather*}
provided $A,b$ sufficiently large.
\end{proof}

\section{Construction of the principal part of solutions exhibiting norm inflation}\label{sectioninlfation}
Recall that $\delta>0$ and
\begin{align*}
    \lambda=\delta^{-1},\qquad \lambda_q=2^q,
\end{align*}
where $q\in\mathbb{R}$. We define the principal part of the norm inflation solution as follows.
\begin{defi}\label{propdefiU1}
We define the velocity fields $U_1$ and $U_2$ by: 
\begin{align}
U_1(x,t) &\triangleq \Phi_1(1-e^{-2\cdot (2\pi \lambda_q)^{2\beta}t}),
\label{U_0}
\\
U_2(x,t) &\triangleq C_\beta{\lambda_q}^{\beta+\alpha-2[\beta]-1}\Delta^{[\beta]}\nabla^{\perp}\!\left(\sum_{\xi}\bar{a}_\xi e^{2\pi i \lambda_q\xi\cdot x}\right)e^{-(2\pi \lambda_q)^{2\beta}t},
\label{U_1}
\end{align}
with
\begin{align*}
    \Phi_1&\triangleq \lambda^{s+1}\sin(2\pi \lambda\xi\cdot x)\bar{\xi}e^{-(2\pi \lambda)^{2\beta}t},\\
    C_\beta&\triangleq \sqrt{2}(2\pi)^{\beta-2[\beta]-1},\\
    \bar{a}_\xi&\triangleq a_{\xi}\left(-\frac{S(\Delta^{-1}\Phi_1)}{ \lambda_q^{2\alpha}}+\mathrm{Id}\right).
\end{align*}
And we define the principal part of the solution by
\begin{align*}
    U(x,t)=U_1(x,t)+U_2(x,t).
\end{align*}
\end{defi}

\begin{prop}\label{low bound of U} 
The vector field $U$ is divergence-free. Moreover, provided that $q$ is sufficiently large, for any $n,m\in\mathbb{N}$, the function $\bar{a}_\xi$ satisfies
\begin{align*}
    \|\partial_t^n\nabla^m\bar{a}_\xi\|_{L^\infty_{t,x}(\mathbb{T}^2)}&\lesssim \lambda^{2\beta n+m}
\end{align*}    
and the two frequency-localized components $U_1$ and $U_2$ satisfy
\begin{align}
        \|\nabla^mU_1(t)\|_{L^\infty(\mathbb{T}^2)}&\lesssim \lambda^{s+1+m}e^{- \lambda^{2\beta}t},\label{estimateU1}\\
        \|\nabla^mU_2(t)\|_{L^\infty(\mathbb{T}^2)}&\lesssim \lambda_q^{\beta+\alpha+m}e^{- \lambda_q^{2\beta}t}.\nonumber
    \end{align}
\end{prop}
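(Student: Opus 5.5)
Three things need checking: that $\bar a_\xi$ is well defined, the derivative bounds on $\bar a_\xi$, and then the bounds on $U_1$, $U_2$ and divergence-freeness.

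\textbf{Well-definedness of $\bar a_\xi$.} As in Definition \ref{Apropdefivk}, this requires
\[
\Big\|\frac{S(\Delta^{-1}\Phi_1)}{\lambda_q^{2\alpha}}\Big\|_{L^\infty_{t,x}}\le\frac17 ,
\]
so that Lemma \ref{geometric} applies. The field $\Phi_1$ is a single Fourier mode of frequency $\lambda$ and amplitude at most $\lambda^{s+1}$, and $S\Delta^{-1}$ has order $-1$. Hence $\|S(\Delta^{-1}\Phi_1)\|_{L^\infty}\lesssim\lambda^{s}$ uniformly in $t\ge0$, because $e^{-(2\pi\lambda)^{2\beta}t}\le1$. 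Since $\lambda=\delta^{-1}$ is fixed, taking $q$ large makes $\lambda^s\lambda_q^{-2\alpha}\le\frac17$.

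\textbf{Bounds on $\bar a_\xi$.} Write $M\triangleq -S(\Delta^{-1}\Phi_1)/\lambda_q^{2\alpha}$. Then $M$ equals $\lambda^{s}\lambda_q^{-2\alpha}\,g(\lambda\xi\cdot x)\,e^{-(2\pi\lambda)^{2\beta}t}$ times a constant matrix, with $g$ trigonometric. Each $\nabla$ costs a factor $\lambda$ and each $\partial_t$ a factor $\lambda^{2\beta}$, so
\[
\|\partial_t^n\nabla^m M\|_{L^\infty}\lesssim \lambda^s\lambda_q^{-2\alpha}\lambda^{2\beta n+m}\le\lambda^{2\beta n+m}.
\]
The functions $a_\xi$ are smooth on the ball $B_{1/7}(\mathrm{Id})$ with uniformly bounded derivatives there. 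Applying the Faà di Bruno chain rule, each term is a product of derivatives of $M$ with total weight $\lambda^{2\beta n+m}$ and extra factors $\lambda^s\lambda_q^{-2\alpha}\le1$. This gives $\|\partial_t^n\nabla^m\bar a_\xi\|_{L^\infty_{t,x}}\lesssim\lambda^{2\beta n+m}$.

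\textbf{Bound on $U_1$.} The factor $1-e^{-2(2\pi\lambda_q)^{2\beta}t}$ lies in $[0,1]$ and is constant in $x$. So $\nabla^mU_1$ is $\nabla^m\Phi_1$ times this factor, and
\[
\|\nabla^mU_1(t)\|_{L^\infty}\lesssim\lambda^{s+1+m}e^{-(2\pi\lambda)^{2\beta}t}\le\lambda^{s+1+m}e^{-\lambda^{2\beta}t}.
\]

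\textbf{Bound on $U_2$.} Distribute the $2[\beta]+1+m$ derivatives (from $\Delta^{[\beta]}\nabla^\perp$ and $\nabla^m$) between $\bar a_\xi$ and $e^{2\pi i\lambda_q\xi\cdot x}$ by the Leibniz rule. A term with $j$ derivatives on $\bar a_\xi$ is bounded by $\lambda^j\lambda_q^{2[\beta]+1+m-j}\le\lambda_q^{2[\beta]+1+m}$ once $\lambda_q\ge\lambda$. Multiplying by the prefactor $\lambda_q^{\beta+\alpha-2[\beta]-1}$ gives $\lambda_q^{\beta+\alpha+m}$. The time factor satisfies $e^{-(2\pi\lambda_q)^{2\beta}t}\le e^{-\lambda_q^{2\beta}t}$.

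\textbf{Divergence-freeness.} $U_2$ is $\Delta^{[\beta]}\nabla^\perp$ applied to a scalar, and constant-coefficient operators commute with $\operatorname{div}$, so $\operatorname{div}U_2=\operatorname{div}\nabla^\perp(\cdot)=0$. $U_1$ is a scalar multiple of $\sin(2\pi\lambda\xi\cdot x)\bar\xi$ with $\bar\xi\perp\xi$, so it is divergence-free. A real-valuedness issue for $U_2$ is handled, as in the earlier construction, by the symmetry $\Lambda=-\Lambda$ with $a_{-\xi}=a_\xi$ from Lemma \ref{geometric}.

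No step is a real obstacle. The only care needed is in the ordering of parameters: $q$ must be chosen large relative to the fixed $\lambda$ and $s$, which ensures $\lambda^s\lambda_q^{-2\alpha}\le\frac17$ and $\lambda\le\lambda_q$.
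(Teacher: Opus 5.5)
Your proof is correct and follows the route the paper intends. The paper omits the argument as analogous to the earlier estimates for $v_k^0$ and $a_{\xi,k}$: make the matrix argument small to invoke Lemma \ref{geometric}, apply the chain rule to $\bar a_\xi$ and the Leibniz rule to $U_2$, with $q$ taken large relative to the fixed $\lambda$ and $s$. One point to tighten: Lemma \ref{geometric} only gives smoothness of $a_\xi$ on the open ball, so to get uniform derivative bounds you should take $q$ larger so that $\mathrm{Id}+M$ stays in a compact sub-ball such as $\overline{B_{1/14}(\mathrm{Id})}$.
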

\begin{proof}
    The proof is analogous to that of Proposition \ref{estimate of mvi}, and is therefore omitted.
\end{proof}
We now define the residuals \(G\) and \(\widetilde{G}\) through the identity, 
\begin{align}\label{G about U} 
        -\mathbb{P}\operatorname{div}G-\Delta^{[\beta]}\widetilde{G}=\partial_t U+(-\Delta)^{\beta} U+ \mathbb{P}\operatorname{div} (U\otimes  U),  
\end{align}
where \(G\in C^{\infty}((0,1]\times\mathbb{T}^2;S^{2\times2})\) is a matrix-valued residual in divergence form, while
\(\widetilde{G}\in C^{\infty}((0,1]\times\mathbb{T}^2;\mathbb{R}^2)\) is a vector-valued residual in Laplacian form.
Before defining $G$ and $\widetilde{G}$, we first decompose $U_2=U_2^p+U_2^e,$
with
\begin{align}
    &U_2^p\triangleq \sqrt{2}(2\pi)^{\beta}{\lambda_q}^{\beta+\alpha}i\sum_{\xi}\bar{a}_\xi e^{2\pi i \lambda_q\xi\cdot x}\bar{\xi}e^{-(2\pi \lambda_q)^{2\beta}t},\label{U_2p}
    \\&U_2^e\triangleq C_\beta{\lambda_q}^{\beta+\alpha-2[\beta]-1}\Delta^{[\beta]}\nabla^{\perp}\!\left(\sum_{\xi}\bar{a}_\xi e^{2\pi i \lambda_q\xi\cdot x}\right)e^{-(2\pi \lambda_q)^{2\beta}t}\nonumber
    \\&\quad\quad-\sqrt{2}(2\pi)^{\beta}{\lambda_q}^{\beta+\alpha}i\sum_{\xi}\bar{a}_\xi e^{2\pi i \lambda_q\xi\cdot x}\bar{\xi}e^{-(2\pi \lambda_q)^{2\beta}t},\label{U_2e}
\end{align}
We are now in a position to define $G$ and $\widetilde{G}$. 
      Let us define the residual term $G$ by decomposing it into two parts:
\begin{align*}
    G\triangleq G_1+G_2,
\end{align*}
    where $G_1$ is the oscillation error:
    \begin{align*}
        G_1\triangleq -\mathcal{R}\left(\mathbb{P}\mathrm{div}(U_2^p\otimes U_2^p)+(\partial_t+(-\Delta)^{\beta})U_1\right),
    \end{align*}
    and $G_2$ is the corrector error:
    \begin{align*}
        G_2\triangleq-U^e_2\otimes U_2-U^p_2\otimes U_2^e.
    \end{align*}
    Here $p$ denotes the pressure terms, which will be define later.  

    Next, we define the linear error $\widetilde{G}$,
    \begin{align}\label{g}
        \widetilde{G}\triangleq&C_\beta{\lambda_q}^{\beta+\alpha-2[\beta]-1}\nabla^{\perp}\!\left(\sum_{\xi}[\bar{a}_\xi,(-\Delta)^{\beta}] e^{2\pi i \lambda_q\xi\cdot x}\right)e^{-(2\pi \lambda_q)^{2\beta}t} \nonumber
        \\&-
        C_\beta{\lambda_q}^{\beta+\alpha-2[\beta]-1}\nabla^{\perp}\!\left(\sum_{\xi}\partial_t\bar{a}_\xi\cdot e^{2\pi i \lambda_q\xi\cdot x}\right)e^{-(2\pi \lambda_q)^{2\beta}t}
        .
    \end{align}
    
Finally, we verify that $G$ and $\widetilde{G}$. As $\operatorname{div}\mathcal{R}=Id$, we then get
\begin{align}\label{G_1}
    \mathbb{P}\operatorname{div} G = -\mathbb{P}\operatorname{div}(U\otimes U)-(\partial_t+(-\Delta)^{\beta})U_1.
\end{align}
Moreover, using the fact that $e^{2\pi i\lambda_q\xi \cdot x}e^{-(2\pi \lambda_q)^{2\beta}t}$ are solutions to the fractional heat equation $(\partial_t+(-\Delta)^{\beta})u=0$, we derive
\begin{align}\label{G_3}
    \Delta^{[\beta]}\widetilde{G}=-(\partial_t+(-\Delta)^{\beta})U_2.
\end{align}
It follows from \eqref{G_1}, \eqref{G_3} and the identity $U=U_1+U_2$ that \eqref{G about U} holds.   
\begin{prop}\label{Norm flation G}
    Let $\eta>0$. For all $t\in(0,1]$ and $m\in\{0,1\}$, $G$ and $\widetilde{G}$ satisfy
    \begin{gather}
        \|G(t)\|_{L^\infty(\mathbb{T}^2)}\leq \eta t^{-\frac{2\beta+2\alpha-1+\gamma}{2\beta}},
    \end{gather}
    and
    \begin{align}
        \|\nabla^m\widetilde{G}(t)\|_{L^\infty(\mathbb{T}^2)}\leq \eta t^{-\frac{3\beta+\alpha-1+\gamma-2[\beta]+m}{2\beta}},
    \end{align}
    provided that $q$ is sufficiently large. 
\end{prop}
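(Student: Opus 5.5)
The proof follows that of Proposition \ref{Prop 4.1}, with $N_{k}$ replaced by $\lambda_q$ and $N_{k-1}$ replaced by $\lambda$. Throughout, $\lambda=\delta^{-1}$ is fixed and $q$ is sent to infinity. Every smallness gain comes either from a negative power of $\lambda_q$ multiplying a power of $\lambda$, or from converting $e^{-\lambda_q^{2\beta}t}$ into a power of $t$ with some spare room in the exponent. I will repeatedly use
\[
\lambda_q^{a}e^{-c\lambda_q^{2\beta}t}\lesssim t^{-\frac{a+\gamma}{2\beta}}\lambda_q^{-\gamma},
\qquad
\lambda^{a}e^{-c\lambda^{2\beta}t}\lesssim_\lambda 1 .
\]
I also use the bounds of Proposition \ref{low bound of U}, including $|\partial_t^n\nabla^m\bar a_\xi|\lesssim\lambda^{2\beta n+m}$. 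The smallness condition $\|S(\Delta^{-1}\Phi_1)\|_{L^\infty}\lambda_q^{-2\alpha}\le\frac17$ needed for Lemma \ref{geometric} holds once $q$ is large.

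\textbf{Oscillation error $G_1$.} First expand $\operatorname{div}(U_2^p\otimes U_2^p)$ using \eqref{PP1}--\eqref{PP2} and Lemma \ref{2dlemma}. The gradient terms are annihilated by $\mathbb P$. The low-frequency part is
\[
-\sum_\xi\operatorname{div}(\bar a_\xi^2\bar\xi\otimes\bar\xi)\,2(2\pi\lambda_q)^{2\beta}\lambda_q^{2\alpha}e^{-2(2\pi\lambda_q)^{2\beta}t}
=2(2\pi)^{2\beta}\lambda_q^{2\beta}\Phi_1e^{-2(2\pi\lambda_q)^{2\beta}t},
\]
using Lemma \ref{geometric} and $\operatorname{div}S(\Delta^{-1}\Phi_1)=\Phi_1$. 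The sign convention in $\bar a_\xi$ is exactly what makes this term cancel against $(\partial_t+(-\Delta)^\beta)U_1$. Indeed, $\Phi_1$ solves the fractional heat equation, so
\[
(\partial_t+(-\Delta)^\beta)U_1=\Phi_1\cdot 2(2\pi\lambda_q)^{2\beta}e^{-2(2\pi\lambda_q)^{2\beta}t}.
\]
The signs and constants must be checked carefully here; I expect them to match by design.

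What remains is a sum of terms $\mathcal R\mathbb P(\nabla b\,e^{2\pi i\lambda_q\zeta\cdot x})\lambda_q^{2\beta+2\alpha}e^{-2(2\pi\lambda_q)^{2\beta}t}$ with $\zeta\neq0$ and $\|\nabla^m b\|\lesssim\lambda^m$. There is a mild issue that $b$ depends on $t$ through $\Phi_1$, but the estimate is pointwise in $t$. By \eqref{eR} and the $C^\mu$ boundedness of $\mathbb P$, as in \eqref{F131}, this is bounded by $\lambda\lambda_q^{2\beta+2\alpha-1+\mu}e^{-\lambda_q^{2\beta}t}$. Taking $\mu=\gamma/2$ and using the first conversion above, this is at most $\lambda\lambda_q^{-\gamma/2}t^{-\frac{2\beta+2\alpha-1+\gamma}{2\beta}}\le\eta\, t^{-\frac{2\beta+2\alpha-1+\gamma}{2\beta}}$ for $q$ large.

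\textbf{Corrector error $G_2$ and the linear error $\widetilde G$.} The terms in $U_2^e$ carry at least one derivative on $\bar a_\xi$, which gives the bound $\|U_2^e\|\lesssim\lambda\lambda_q^{\beta+\alpha-1}e^{-\lambda_q^{2\beta}t}$. Hence
\[
\|G_2\|\lesssim\lambda\lambda_q^{2\beta+2\alpha-1}e^{-2\lambda_q^{2\beta}t},
\]
which is handled by the same conversion, gaining the full factor $\lambda_q^{-\gamma}$.

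For $\widetilde G$, apply the commutator estimate \eqref{commuator} with $a=\bar a_\xi$ and frequency $\lambda_q$. This gives
\[
\|\nabla^{1+m}[\bar a_\xi,(-\Delta)^\beta]e^{2\pi i\lambda_q\xi\cdot x}\|\lesssim\lambda_q^{1+m}\big(\lambda_q^{2\beta-1}\lambda^4+\lambda^{2\beta+3}\big).
\]
The $\partial_t\bar a_\xi$ term contributes $\lambda^{2\beta}\lambda_q^{1+m}$, which is much smaller. Multiplying by $\lambda_q^{\beta+\alpha-2[\beta]-1}e^{-\lambda_q^{2\beta}t}$ yields
\[
\|\nabla^m\widetilde G\|\lesssim\lambda^{C}\lambda_q^{3\beta+\alpha-1-2[\beta]+m}e^{-\lambda_q^{2\beta}t}\le\eta\, t^{-\frac{3\beta+\alpha-1+\gamma-2[\beta]+m}{2\beta}}
\]
for $q$ large.

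\textbf{Main obstacle.} The main obstacle is the exact cancellation in $G_1$. One must verify that the constants $C_\beta$ and $\sqrt2(2\pi)^\beta$, together with the choice of sign in $\bar a_\xi$, reproduce precisely $-(\partial_t+(-\Delta)^\beta)U_1$. A useful check is that $U_1(0)=0$ makes its derivative match the $2(2\pi\lambda_q)^{2\beta}$ factor. The remaining pieces are routine bookkeeping with $\lambda$ fixed.
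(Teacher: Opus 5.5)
Your argument is the paper's. It uses the same split into the oscillation error $G_1$, the corrector error $G_2$ and the linear error $\widetilde G$. It uses the same low-frequency cancellation via Lemma \ref{geometric}, with Lemma \ref{2dlemma} sending the high-frequency self-interaction into the pressure, and the same stationary-phase bound \eqref{eR} with $\mu=\gamma/2$. It uses the same bound $\|U_2^e\|_{L^\infty}\lesssim\lambda\lambda_q^{\beta+\alpha-1}e^{-\lambda_q^{2\beta}t}$, and \eqref{commuator} with $P=\lambda$, $Q=\lambda_q$, plus the cheaper $\partial_t\bar a_\xi$ term.

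There is, however, a sign error in exactly the step you single out as the main obstacle. In the terms $\xi_2=-\xi_1$ of $U_2^p\otimes U_2^p$ there are two minus signs: $i\cdot i=-1$ from the amplitudes, and $\bar\xi\otimes\overline{(-\xi)}=-\bar\xi\otimes\bar\xi$. You kept only one of them. The mean part therefore carries a plus sign:
\[
2(2\pi)^{2\beta}\lambda_q^{2\beta+2\alpha}e^{-2(2\pi\lambda_q)^{2\beta}t}\sum_{\xi\in\Lambda}\bar a_\xi^2\,\bar\xi\otimes\bar\xi .
\]
By Lemma \ref{geometric} and the minus sign built into $\bar a_\xi$, its divergence is $-2(2\pi\lambda_q)^{2\beta}\Phi_1e^{-2(2\pi\lambda_q)^{2\beta}t}$. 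This cancels $(\partial_t+(-\Delta)^\beta)U_1=2(2\pi\lambda_q)^{2\beta}\Phi_1e^{-2(2\pi\lambda_q)^{2\beta}t}$, which is \eqref{G_1 3}.

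With the sign as you displayed it, the two terms would add instead. Then $G_1$ would contain $\mathcal R$ of $4(2\pi\lambda_q)^{2\beta}\Phi_1e^{-2(2\pi\lambda_q)^{2\beta}t}$, which at $t=\lambda_q^{-2\beta}$ is of order $\lambda^{s}\lambda_q^{2\beta}$. The target bound there is only $\eta\lambda_q^{2\beta+2\alpha-1+\gamma}$, and $2\alpha-1+\gamma<0$. So the estimate genuinely hinges on this sign, not on routine bookkeeping. Once it is corrected, your proof goes through exactly as in the paper.

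A remark that applies to the paper as well: with $G=G_1+G_2$ as defined, the identity \eqref{G about U} is missing $\mathbb P\operatorname{div}(U_1\otimes U_2+U_2\otimes U_1)$. The term $U_1\otimes U_1$ is harmless, since $U_1$ is a shear flow and $\operatorname{div}(U_1\otimes U_1)=0$. The cross terms should be added to $G_2$; they are admissible because
\[
\|U_1\otimes U_2\|_{L^\infty}\lesssim\lambda^{s+1}\lambda_q^{\beta+\alpha}e^{-\lambda_q^{2\beta}t}\lesssim\lambda^{s+1}\lambda_q^{-(\beta+\alpha-1+\gamma)}\,t^{-\frac{2\beta+2\alpha-1+\gamma}{2\beta}},
\]
and $\beta+\alpha-1+\gamma>0$ since $\beta>1$. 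This bound is therefore small for $q$ large.
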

\begin{proof}  
  
\textbf{Estimate of $\boldsymbol{G_{1}}$}.
By the definition of $U^p_2$ in \eqref{U_2p}, we have
\begin{align}\label{G_1 1}
&\operatorname{div}\bigl(U_2^p\otimes U_2^p\bigr) = -
\operatorname{div}\left(
\sum_{\xi_1,\xi_2\in\Lambda}
\bar{a}_{\xi_1}\bar{a}_{\xi_2}
e^{2\pi i\lambda_q(\xi_1+\xi_2)\cdot x}\,\bar{\xi}_1\otimes\bar{\xi}_2
\right)2\cdot(2\pi)^{2\beta}{\lambda_q}^{2\beta+2\alpha}e^{-2(2\pi \lambda_q)^{2\beta}t}
\end{align}
Using Lemma \ref{2dlemma} and the product rule, one gets
\begin{align}\label{G_1 2}
&\operatorname{div}\left(
\sum_{\xi_1,\xi_2\in\Lambda}
\bar{a}_{\xi_1}\bar{a}_{\xi_2}
e^{2\pi i\lambda_q(\xi_1+\xi_2)\cdot x}\,\bar{\xi}_1\otimes\bar{\xi}_2
\right)\nonumber \\
&=-\sum_{\xi\in\Lambda}\operatorname{div}\left(\bar{a}_{\xi}^{2}\bar{\xi}\otimes\bar{\xi}\right)
+\sum_{\xi_1\neq\xi_2\in\Lambda}\operatorname{div}\Big(
\bar{a}_{\xi_1}\bar{a}_{\xi_2}\bar{\xi}_1\otimes\bar{\xi}_2
\Big)e^{2\pi i\lambda_q(\xi_1+\xi_2)\cdot x} \nonumber  
\\&\quad-\frac12\nabla\left(
\left|\sum_{\xi\in\Lambda}\bar{a}_{\xi}e^{2\pi i\lambda_q\xi\cdot x}\bar{\xi}\right|^2
-\left|\sum_{\xi\in\Lambda}\bar{a}_{\xi}e^{2\pi i\lambda_q\xi\cdot x}\right|^2\right)  +\frac12\sum_{\xi_1\neq\xi_2\in\Lambda}\nabla\left(
\bar{a}_{\xi_1}
\bar{a}_{\xi_2}
\right)\bigl(\bar{\xi}_1\cdot\bar{\xi}_2-1\bigr)e^{2\pi i\lambda_q(\xi_1-\xi_2)\cdot x}.
\end{align}
where the first term corresponds to the low-frequency contribution.
Furthermore, by Lemma \ref{geometric}, we obtain
\begin{align*}
\sum_{\xi\in\Lambda}\operatorname{div}\left(\bar{a}_{\xi}^{2}\bar{\xi}\otimes\bar{\xi}\right)
=\operatorname{div}\left(-\frac{S(\Delta^{-1}\Phi_1)}{ \lambda_q^{2\alpha}}+\mathrm{Id}\right)
=-\frac{\Phi_1}{\lambda_q^{2\alpha}}.
\end{align*}
Therefore, using the fact that $\Phi_1$ is a solution of heat equation, we have 
\begin{align}\label{G_1 3}
(\partial_t+(-\Delta)^{\beta})U_1
+\mathbb{P}\operatorname{div}\sum_{\xi\in\Lambda}\bar{a}_{\xi}^{2}\bar{\xi}\otimes\bar{\xi}
\cdot 2(2\pi)^{2\beta}{\lambda_q}^{2\beta+2\alpha}e^{-2(2\pi \lambda_q)^{2\beta}t}=0.
\end{align}
Combining \eqref{G_1 1}-\eqref{G_1 3} and the fact that $\mathbb{P}\nabla=0$, we may rewrite $G_1$ as
\begin{align*}
    G_1&=-\mathcal{R}\left(\mathbb{P}\mathrm{div}(U_2^p\otimes U_2^p)+(\partial_t+(-\Delta)^{\beta})U_1\right)\\
    &=\mathcal{R}\mathbb{P}\Big(\sum_{\xi_1\neq\xi_2\in\Lambda}\operatorname{div}\Big(
\bar{a}_{\xi_1}\bar{a}_{\xi_2}\bar{\xi}_1\otimes\bar{\xi}_2
\Big)e^{2\pi i\lambda_q(\xi_1+\xi_2)\cdot x} \nonumber  +\frac12\sum_{\xi_1\neq\xi_2\in\Lambda}\nabla\left(
\bar{a}_{\xi_1}
\bar{a}_{\xi_2}
\right)\bigl(\bar{\xi}_1\cdot\bar{\xi}_2-1\bigr)e^{2\pi i\lambda_q(\xi_1-\xi_2)\cdot x}\Big)\\&\quad\quad\cdot 2(2\pi)^{2\beta}{\lambda_q}^{2\beta+2\alpha}e^{-2(2\pi \lambda_q)^{2\beta}t},
\end{align*}

 Let $b$ is a smooth function with $\|\nabla^m b\|_{L^\infty}\lesssim \lambda^m$ for any $m\in\mathbb{N}$ and Let $\xi\in\mathbb{Z}^2\setminus\{0\}$. Thus, it suffices to show that
 \begin{gather*}
     \|\mathcal{R}\mathbb{P}(\nabla b e^{2\pi i\lambda_q\xi\cdot x}){\lambda_q}^{2\beta+2\alpha}e^{-(2\pi \lambda_q)^{2\beta}t}\|_{L^{\infty}(\mathbb{T}^2)}
     \lesssim \eta t^{-\frac{2\beta+2\alpha-1+\gamma}{2\beta}}.
 \end{gather*}
By the boundedness of $\mathbb{P}$ on $C^{\mu}(\mathbb{T}^2)$ for $\mu\in(0,1)$ and by applying \eqref{eR} with $m$ chosen large enough, we obtain
\begin{gather*}
    \|\mathcal{R}\mathbb{P}(\nabla be^{2\pi i\lambda_q\xi\cdot x})\|_{C^\mu(\mathbb{T}^2)}\lesssim \lambda_q^{-1+\mu}\lambda+\lambda_q^{-m+\mu}\lambda^{m+1}+\lambda_q^{-m}\lambda^{m+\mu+1}\lesssim \lambda_q^{-1+\mu}\lambda.
\end{gather*}
Then choosing $\mu=\frac{\gamma}{2}$ and taking $q$ sufficiently large, we arrive at
\begin{align}
    \|\mathcal{R}\mathbb{P}(\nabla b e^{2\pi i\lambda_q\xi\cdot x}){\lambda_q}^{2\beta+2\alpha}e^{-(2\pi \lambda_q)^{2\beta}t}\|_{C^\mu(\mathbb{T}^2)}
    &\lesssim
    \lambda_q^{2\beta+2\alpha-1+\mu}\lambda e^{-(2\pi \lambda_q)^{2\beta}t}\nonumber\\
    &\lesssim 
    \lambda_q^{-\frac{\gamma}{2}}\lambda_q^{2\beta+2\alpha-1+\gamma} e^{-(2\pi \lambda_q)^{2\beta}t}\nonumber\\
    &\lesssim \eta t^{-\frac{2\beta+2\alpha-1+\gamma}{2\beta}}.\label{G131}
\end{align}
Consequently,
\begin{gather}
    \|G_{1}\|_{L^{\infty}(\mathbb{T}^2)}\lesssim\eta t^{-\frac{2\beta+2\alpha-1+\gamma}{2\beta}}.\label{G1 end}
\end{gather}
\textbf{Estimate of $\boldsymbol{G_{2}}$}.
By the definition of $U^e_2$ and $U^p_2$ in \eqref{U_2p} and \eqref{U_2e}, respectively, together with the Leibniz rule, we obtain
\begin{align*}
    &\|U_2^p\|_{L^{\infty}(\mathbb{T}^2)} 
    \lesssim \lambda_q^{\beta+\alpha}e^{-(2\pi \lambda_q)^{2\beta}t},
    \\&\|U_2^e\|_{L^{\infty}(\mathbb{T}^2)} 
    \lesssim \lambda\lambda_q^{\beta+\alpha-1}e^{-(2\pi \lambda_q)^{2\beta}t}.
\end{align*}
provided $q$ is sufficiently large. Hence, for sufficiently large $q$, we get
\begin{align}\label{G_2 1}
    \|U_2^e\otimes U_2^p\|_{L^{\infty}(\mathbb{T}^2)} 
    &\lesssim \lambda_q^{2\beta+2\alpha-1}e^{-2(2\pi \lambda_q)^{2\beta}t}
    \nonumber\\&\lesssim \lambda_q^{-\gamma}t^{-\frac{2\beta+2\alpha-1+\gamma}{2\beta}}
    \nonumber\\&\lesssim \eta t^{-\frac{2\beta+2\alpha-1+\gamma}{2\beta}},
\end{align}
Similarly, it holds
\begin{align}\label{G_2 3}
    \|U_2^e\otimes U_2^e\|_{L^{\infty}(\mathbb{T}^2)} 
    &\lesssim \eta t^{-\frac{2\beta+2\alpha-1+\gamma}{2\beta}}.
\end{align}
It follows from \eqref{G_2 1}, \eqref{G_2 3} and the definition of $G_2$ that
\begin{gather}
    \|G_{2}\|_{L^{\infty}(\mathbb{T}^2)}\lesssim\eta t^{-\frac{2\beta+2\alpha-1+\gamma}{2\beta}}.\label{G2 end}
\end{gather}
\textbf{Estimate of $\boldsymbol{G}$}.
By combining \eqref{G1 end} and \eqref{G2 end} and choosing a new $\eta$ small enough to absorb all involved constants, one gets
\begin{align*}
    \|G\|_{L^{\infty}(\mathbb{T}^2)}\leq\eta t^{-\frac{2\beta+2\alpha-1+\gamma}{2\beta}}.
\end{align*}
\textbf{Estimate of $\boldsymbol{\widetilde{G}}$}.
We split $\widetilde{G}$ as $\widetilde{G}=\widetilde{G}_1+\widetilde{G}_2$ where
\begin{align*}
    &\widetilde{G}_1=C_\beta{\lambda_q}^{\beta+\alpha-2[\beta]-1}\nabla^{\perp}\!\left(\sum_{\xi}[(-\Delta)^{\beta},\bar{a}_\xi] e^{2\pi i \lambda_q\xi\cdot x}\right)e^{-(2\pi \lambda_q)^{2\beta}t},
    \\&\widetilde{G}_2=C_\beta{\lambda_q}^{\beta+\alpha-2[\beta]-1}\nabla^{\perp}\!\left(\sum_{\xi}(\partial_t\bar{a}_\xi) e^{2\pi i \lambda_q\xi\cdot x}\right)e^{-(2\pi \lambda_q)^{2\beta}t}.
\end{align*}
By \eqref{commuator}, we obtain
\begin{align}\label{g_1}
    \|\nabla^m \widetilde{G}_1\|_{L^{\infty}(\mathbb{T}^2)}\lesssim&
    \sum_{\xi}{\lambda_q}^{\beta+\alpha-2[\beta]-1}\| \nabla^{1+m}[(-\Delta)^{\beta},\bar{a}_\xi] e^{2\pi i \lambda_q\xi\cdot x}\|_{L^{\infty}(\mathbb{T}^2)} e^{-(2\pi \lambda_q)^{2\beta}t}
    \nonumber\\&\lesssim {\lambda_q}^{\beta+\alpha-2[\beta]+m}(\lambda_q^{2\beta-1}\lambda^4+\lambda^{2\beta+3})e^{-(2\pi \lambda_q)^{2\beta}t}
    \nonumber\\&\lesssim \lambda_q^{-\gamma}\cdot\lambda_q^{3\beta+\alpha-1+\gamma-2[\beta]+m}e^{-(2\pi \lambda_q)^{2\beta}t}
    \nonumber\\&\lesssim \eta t^{-\frac{3\beta+\alpha-1+\gamma-2[\beta]+m}{2\beta}},
\end{align}
for sufficiently large $q$. Using the Proposition \ref{low bound of U}, one derives that
\begin{align}\label{g_2}
    \|\nabla^m \widetilde{G}_2\|_{L^{\infty}(\mathbb{T}^2)}&\lesssim {\lambda_q}^{\beta+\alpha-2[\beta]+m}\lambda^{2\beta}e^{-(2\pi \lambda_q)^{2\beta}t}
    \nonumber\\&\lesssim \lambda_q^{1-2\beta-\gamma}\cdot t^{-\frac{3\beta+\alpha-1+\gamma-2[\beta]+m}{2\beta}}
    \nonumber\\&\lesssim \eta t^{-\frac{3\beta+\alpha-1+\gamma-2[\beta]+m}{2\beta}},
\end{align}
provided that $q$ is sufficiently large.
Combining \eqref{g_1} and \eqref{g_2}, for $m=0,1$, we conclude
\begin{align*}
    \|\nabla^m \widetilde{G}\|_{L^{\infty}(\mathbb{T}^2)}&\leq \eta t^{-\frac{3\beta+\alpha-1+\gamma-2[\beta]+m}{2\beta}}.
\end{align*}

\end{proof}

\section{Construction of the perturbation}\label{sec:4}
 
 The vector fields $\{v^{(i)}\}_{i\in \{1,2\}}$ constructed in Section \ref{sectprincipal} and $U$ from Section \ref{sectioninlfation} are almost solutions of (\ref{e:NSf}), whose residual terms are small in some certain subcritical norm. Consequently, we may introduce perturbations to these approximate solutions and thereby obtain exact solutions.

Firstly, we show the existence of $\{\omega^{(i)}\}_{i\in \{1,2\}}$ such that $\{v^{(i)}+\omega^{(i)}\}_{i\in \{1,2\}}$ solves (\ref{e:NSf}).
Recall that 
\begin{align*}
   \gamma\in(0,\min\{\frac{\beta-\alpha-1}{2},\frac{1-2\alpha}{2}\}],\quad \epsilon_2\in (0, \min\{\frac{\beta-\alpha-1}{2},\frac{1-2\alpha-\gamma}{2}\}],
\end{align*}
and
\begin{align*}
    \tau\in(\frac{\beta+\alpha-1+\gamma}{2\beta},\frac{\beta-\alpha-\epsilon_2}{2\beta}),\quad \kappa\in(0,\frac{2[\beta]-\beta-\alpha+1-\gamma}{2}].
\end{align*}
Moreover, $\epsilon_1$ is the parameter which depends on other parameters, which will be fixed later. Recall $I(a,b)$ is defined as 
\begin{gather*}
    I(a,b)=\int_0^1(1-s)^{-a}s^{-b}ds,\quad\forall a,b<1.
\end{gather*}
We define the space for the perturbation $\{\omega^{(i)}\}_{i\in \{1,2\}}$:
\begin{align*}
    X=\{\omega\in C((0,1]\times\mathbb{T}^2;\mathbb{R}^2);\|\omega\|_X<\infty\},
\end{align*}
with norm
\begin{align*}
    \| \omega\|_{X}=\sup_{t\in (0,1]}t^{\tau}\|\omega\|_{L^{\infty}}.
\end{align*}

\begin{prop}\label{Prop rho}
    Let $i\in\{0,1\}.$ There exist $\eta_0>0$ such that for all $\eta\in(0,\eta_0)$, there exist $\omega^{(i)}\in B_X(0,\eta)$ such that $u^{(i)}=v^{(i)}+\omega^{(i)}$ satisfy \eqref{e:NSf}. Moreover, $\omega^{(i)}\rightarrow0$ in $B^{-\beta}_{\infty,\infty}(\mathbb{T}^2)$ as $t\rightarrow 0$. 
\end{prop}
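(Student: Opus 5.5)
We will set up a contraction argument in $X$ for the Duhamel formulation of the perturbation equation. For $\omega\in X$ define
\begin{align*}
\mathcal{T}\omega(t)\triangleq\int_0^t e^{-(t-s)(-\Delta)^\beta}\Big(\mathbb{P}\operatorname{div}F^{(i)}+\Delta^{[\beta]}\widetilde{F}^{(i)}-\mathbb{P}\operatorname{div}\big(v^{(i)}\otimes\omega+\omega\otimes v^{(i)}+\omega\otimes\omega\big)\Big)(s)\,ds .
\end{align*}
A fixed point $\omega^{(i)}=\mathcal{T}\omega^{(i)}$ vanishes at $t=0$. It makes $u^{(i)}=v^{(i)}+\omega^{(i)}$ a mild, hence (by parabolic smoothing on $(0,1]$) classical, solution of \eqref{e:NSf}, thanks to \eqref{defiF}. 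The basic tool is the kernel bound
\begin{align*}
\|e^{-t(-\Delta)^\beta}\mathbb{P}\operatorname{div}f\|_{L^\infty}\lesssim t^{-\frac{1}{2\beta}}\|f\|_{L^\infty},\qquad \|e^{-t(-\Delta)^\beta}\Delta^{[\beta]}\nabla^{-\kappa}g\|_{L^\infty}\lesssim t^{-\frac{2[\beta]-\kappa}{2\beta}}\|g\|_{L^\infty}.
\end{align*}
We expect to take these from the appendix; they follow from the fact that the symbol of $e^{-(-\Delta)^\beta}\mathbb{P}\operatorname{div}$ is Schwartz, together with scaling. For the $\widetilde{F}$ term we interpolate the $m=0,1$ bounds in Proposition \ref{Prop 4.1} to get $\|\nabla^\kappa\widetilde F^{(i)}(s)\|_{L^\infty}\le\eta s^{-\frac{3\beta+\alpha-1+\gamma-2[\beta]+\kappa}{2\beta}}$.

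\textbf{Step 1: the forcing terms.} By Proposition \ref{Prop 4.1},
\begin{align*}
t^\tau\Big\|\int_0^t e^{-(t-s)(-\Delta)^\beta}\mathbb{P}\operatorname{div}F^{(i)}\,ds\Big\|_{L^\infty}\lesssim \eta\, t^{\tau}\int_0^t(t-s)^{-\frac1{2\beta}}s^{-\frac{2\beta+2\alpha-1+\gamma}{2\beta}}ds=\eta\, t^{\tau-\frac{\beta+\alpha-1+\gamma}{2\beta}}\,I\Big(\tfrac1{2\beta},\tfrac{2\beta+2\alpha-1+\gamma}{2\beta}\Big).
\end{align*}
The Beta integral is finite because $2\alpha+\gamma<1$, and the power of $t$ is nonnegative by the lower bound on $\tau$. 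The $\widetilde F$ term works the same way. It produces $I\big(\tfrac{2[\beta]-\kappa}{2\beta},\tfrac{3\beta+\alpha-1+\gamma-2[\beta]+\kappa}{2\beta}\big)$, which is finite since $2[\beta]-\kappa<2\beta$ and by the choice of $\kappa$. The resulting power is again $t^{\tau-\frac{\beta+\alpha-1+\gamma}{2\beta}}$. So the forcing contributes at most $C_0\eta$ in $X$.

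\textbf{Step 2: linear and quadratic terms.} Using \eqref{mvi2},
\begin{align*}
t^\tau\Big\|\int_0^t e^{-(t-s)(-\Delta)^\beta}\mathbb{P}\operatorname{div}(v^{(i)}\otimes\omega)\,ds\Big\|_{L^\infty}\lesssim \epsilon_1\|\omega\|_X\, t^{\tau}\int_0^t(t-s)^{-\frac1{2\beta}}s^{-\frac{\beta+\alpha+\epsilon_2}{2\beta}-\tau}ds .
\end{align*}
This integral is finite because $\tau<\frac{\beta-\alpha-\epsilon_2}{2\beta}<\frac{\beta-\alpha-\epsilon_2}{2\beta}+\frac{1}{2\beta}$. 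The resulting power $t^{1-\frac{1+\beta+\alpha+\epsilon_2}{2\beta}}$ is nonnegative since $\alpha+\epsilon_2<\beta-1$. The bound is therefore $C_1\epsilon_1\|\omega\|_X$. The quadratic term gives
\begin{align*}
C_2\|\omega\|_X^2\, t^{1-\frac1{2\beta}-\tau}I\Big(\tfrac1{2\beta},2\tau\Big).
\end{align*}
This requires $2\tau<1$, which holds since $\tau<\frac{\beta-\alpha}{2\beta}<\frac12$, and $1-\frac1{2\beta}-\tau\ge0$, which also holds. Difference estimates for $\mathcal{T}\omega_1-\mathcal{T}\omega_2$ are identical. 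Fix $\epsilon_1$ (hence $A,b$) so that $C_1\epsilon_1\le\frac14$, and take $\eta_0$ small so that $2C_2(C_0+1)\eta_0\le\frac14$ on a ball of radius comparable to $\eta$. Then $\mathcal{T}$ is a contraction on a closed ball of $X$, and rescaling $\eta$ gives $\omega^{(i)}\in B_X(0,\eta)$. The main obstacle is this exponent bookkeeping: the whole scheme hinges on $\tau$ lying in the nonempty window $(\frac{\beta+\alpha-1+\gamma}{2\beta},\frac{\beta-\alpha-\epsilon_2}{2\beta})$. That window is where $\beta>1$ and the constraints on $\gamma,\epsilon_2,\alpha$ are used. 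Continuity in time on $(0,1]$ follows from the usual continuity of Duhamel integrals.

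\textbf{Step 3: convergence at $t=0$.} We estimate $\|\omega(t)\|_{B^{-\beta}_{\infty,\infty}}$ by splitting each Duhamel integral into low and high frequencies. More simply, we use $\|e^{-(t-s)(-\Delta)^\beta}\mathbb{P}\operatorname{div}f\|_{B^{-\beta}_{\infty,\infty}}\lesssim\|f\|_{B^{1-\beta}_{\infty,\infty}}\lesssim\|f\|_{L^\infty}$, valid since $1-\beta<0$. The same steps then give a bound by $\int_0^t(\|F\|_{L^\infty}+\|v^{(i)}\|_{L^\infty}\|\omega\|_{L^\infty}+\|\omega\|_{L^\infty}^2)ds$. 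The $\Delta^{[\beta]}\widetilde F$ term is handled by $\|\Delta^{[\beta]}g\|_{B^{-\beta}_{\infty,\infty}}\lesssim\|g\|_{B^{2[\beta]-\beta}_{\infty,\infty}}$ together with the $\nabla^m\widetilde F$ bounds and interpolation. All resulting powers of $s$ are integrable, with exponents below $1$ by the choices above. Hence the bound is $O(t^{c})$ for some $c>0$, and $\omega^{(i)}(t)\to0$ in $B^{-\beta}_{\infty,\infty}$.
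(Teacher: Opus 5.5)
Your proposal is correct and follows the paper's proof essentially step for step: the same Duhamel fixed point in the weighted space $X$, the same Beta-integral bookkeeping (with $\epsilon_1$ absorbing the linear term and the window for $\tau$ doing the work), and the same $B^{1-\beta}_{\infty,\infty}\to B^{-\beta}_{\infty,\infty}$ estimate for the behaviour at $t=0$.

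Two harmless slips remain. First, the $F$-forcing term actually produces the power $t^{\tau-\frac{2\alpha+\gamma}{2\beta}}$; this is still nonnegative since $\alpha<\beta-1$. Second, in Step 3 the paper lets the semigroup absorb $\Delta^{[\beta]}$ at the cost of a factor $(t-s)^{-\frac{2[\beta]-\beta}{2\beta}}$, so only $\|\widetilde F\|_{L^\infty}$ is needed. Your $B^{2[\beta]-\beta}_{\infty,\infty}$ route instead needs derivative bounds on $\widetilde F$ of order above $1$ when $\beta\ge 2$, which the proof of Proposition \ref{Prop 4.1} supplies but its statement does not.
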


\begin{proof}
    Without loss of generality, we write $v$, $\omega$, $F$ and $\widetilde{F}$ to represent $v^{(i)}$,$\omega^{(i)}$,$F^{(i)}$ and $\widetilde{F}^{(i)}$, respectively, with $i\in \{1,2\}$.
    
    In order for $u=v+\omega$ to be a solution of \eqref{e:NSf} with initial data $v(0)$, it is necessary that the perturbation $\omega$ satisfy
    \begin{align*}
        \begin{cases}
            (\partial_{t}+(-\Delta)^{\beta})\omega+\mathbb{P}\operatorname{div}\, (v\otimes \omega+\omega\otimes v+\omega\otimes \omega)=\mathbb{P}\operatorname{div}\, F+\Delta^{[\beta]}\widetilde{F},\\
            \omega(0,\cdot)=0.
        \end{cases}
    \end{align*}
    Applying the Duhamel formula, we seek $\omega$ as a fixed point of the mapping
    \begin{align}\label{Tomega}
        T\omega(t)=\int^t_0 e^{(t-s)(-\Delta)^{\beta}}(\mathbb{P}\operatorname{div}(F-(\omega\otimes\omega+v\otimes\omega+\omega\otimes v))+\Delta^{[\beta]}\widetilde{F})ds,
    \end{align}
    It follows from the definition of $X$ and \eqref{2.7} that
    \begin{align*}
        \|T\omega\|_{X}&\lesssim
\sup_{0<t\le 1} t^{\tau}\Bigg(\int_0^t (t-s)^{-\frac{1}{2\beta}}
\Bigl(\|\omega(s)\|_{L^\infty}^2+\|\omega(s)\|_{L^\infty}\|v(s)\|_{L^\infty}+ \|F(s)\|_{L^\infty}\Bigr)ds \\&\quad + \int_0^t (t-s)^{-\frac{2[\beta]-\kappa}{2\beta}}\|\nabla^{\kappa}\widetilde{F}(s)\|_{L^\infty}ds\Bigg)
\\&\lesssim
\sup_{0<t\le 1} t^{\tau}\Bigg(\int_0^t (t-s)^{-\frac{1}{2\beta}}
\Bigl(s^{-2\tau}\|\omega\|_{X}^2+s^{-\tau}\|\omega\|_{X}\|v(s)\|_{L^\infty}+ \|F(s)\|_{L^\infty}\Bigr)ds \\&\quad + \int_0^t (t-s)^{-\frac{2[\beta]-\kappa}{2\beta}}\|\nabla^{\kappa}\widetilde{F}(s)\|_{L^\infty}ds\Bigg).
    \end{align*}
Applying \eqref{mvi2} and Proposition \ref{Prop 4.1}, we obtain
    \begin{align*}
         \|T\omega\|_{X}&\lesssim\sup_{0<t\le 1} t^{\tau}\Bigg(\int_0^t (t-s)^{-\frac{1}{2\beta}}
\Bigl(s^{-2\tau}\|\omega\|_{X}^2+\epsilon_1s^{-\tau}s^{-\frac{\beta+\alpha+\epsilon_2}{2\beta}}\|\omega\|_{X}+ \eta s^{-\frac{2\beta+2\alpha-1+\gamma}{2\beta}}\Bigr)ds \\&\quad + \int_0^t (t-s)^{-\frac{2[\beta]-\kappa}{2\beta}}\eta s^{-\frac{3\beta+\alpha-1+\gamma-2[\beta]+\kappa}{2\beta}}ds\Bigg)
    \end{align*}
Hence, by performing a change of variables, one gets that
\begin{align*}
   \|T\omega\|_{X}
&\lesssim I\!\left(\frac{1}{2\beta}, 2\tau\right)\|\omega\|_{X}^2\sup_{0<t\le 1}t^{1-\tau-\frac{1}{2\beta}}
+\epsilon_1 I\!\left(\frac{1}{2\beta}, \tau+\frac{\beta+\alpha+\epsilon_2}{2\beta}\right)\|\omega\|_{X}\sup_{0<t\le 1} t^{\frac{\beta-\alpha-\epsilon_2-1}{2\beta}}
\\&\quad + \eta I\!\left(\frac{1}{2\beta}, \frac{2\beta+2\alpha-1+\gamma}{2\beta}\right)\sup_{0<t\le 1} t^{\tau-\frac{2\alpha+\gamma}{2\beta}}
\\&\quad+ \eta I\!\left(\frac{2[\beta]-\kappa}{2\beta}, \frac{3\beta+\alpha-1+\gamma-2[\beta]+\kappa}{2\beta}\right)\sup_{0<t\le 1}t^{\tau-\frac{\beta+\alpha-1+\gamma}{2\beta}}.
\end{align*}
Observe that the Beta integrals appearing above are finite. Indeed, by the assumptions on $\alpha, 
\kappa, \gamma, \tau$ and $\epsilon_2$, we deduce that all the parameters of the Beta integral $I$ lie in $(-\infty,1)$:
\begin{align*}
   \frac{1}{2\beta},\,2\tau,\, \tau+\frac{\beta+\alpha+\epsilon_2}{2\beta},\, \frac{2\beta+2\alpha-1+\gamma}{2\beta}, \,\frac{2[\beta]-\kappa}{2\beta},\,\frac{3\beta+\alpha-1+\gamma-2[\beta]+\kappa}{2\beta} <1.
\end{align*}
Moreover, the exponents of $t$ in the supremum are nonnegative:
\begin{align*}
    {1-\tau-\frac{1}{2\beta}},\, {\frac{\beta-\alpha-\epsilon_2-1}{2\beta}}, \,{\tau-\frac{2\alpha+\gamma}{2\beta}}, \,{\tau-\frac{\beta+\alpha-1+\gamma}{2\beta}}\geq0,
\end{align*}
which implies that the supremum is finite.
Therefore, by choosing $\epsilon_1$ and $\eta$ small enough, there exists a constant $C>0$ such that 
\begin{align*}
    \|T\omega\|_{X}
&\le C\|\omega\|_{X}^2+\frac{1}{4}\|\omega\|_{X}+\frac{\eta}{2},
\end{align*}
By a similar argument, one infers
\begin{align*}
    \|T\omega_1-T\omega_2\|_{X}\le \|\omega_1-\omega_2\|_X(C\|\omega_1\|_{X}+C\|\omega_2\|_X+\frac{1}{4})
\end{align*}
Choosing $\eta_0=\frac{1}{8C}$, it follows that $T$ is a contraction on the ball $B_X(0,\eta)$ for all $\eta \in (0,\eta_0)$. We establish the existence of $\omega$ by the Banach fixed point theorem.

Finally, we verify that $\omega$ vanishes near the initial time in
$B^{-\beta}_{\infty,\infty}(\mathbb T^2)$. Applying \eqref{2.7}, \eqref{mvi}, Proposition \ref{Prop 4.1}, and the bound $\|\omega\|_{X} \le \eta $, we obtain
\begin{align*}
    \| \omega(t)\|_{B^{-\beta}_{\infty,\infty}(\mathbb{T}^2)}&\lesssim\int_0^t \|e^{(t-s)(-\Delta)^{\beta}}(F-(\omega\otimes\omega+v\otimes\omega+\omega\otimes v))\|_{B^{1-\beta}_{\infty,\infty}(\mathbb{T}^2)}ds
    \\
    &\quad+\int_0^t\|e^{(t-s)(-\Delta)^{\beta}}\Delta^{[\beta]}\nabla^{-\beta}\widetilde{F}\|_{L^{\infty}(\mathbb{T}^2)}ds
    \\&\lesssim \int_0^t\|(F-\omega\otimes \omega-v\otimes \omega-\omega\otimes v)(s)\|_{L^{\infty}(\mathbb{T}^2)}ds
    +\int_0^t (t-s)^{-\frac{2[\beta]-\beta}{2\beta}}\|\widetilde{F}(s)\|_{L^{\infty}(\mathbb{T}^2)}ds
    \\&\lesssim\int_0^t \left(s^{-\frac{2\beta+2\alpha-1+\gamma}{2\beta}}+ s^{-2\tau}+s^{-\tau-\frac{\beta+\alpha}{2\beta}}+(t-s)^{-\frac{2[\beta]-\beta}{2\beta}}s^{-\frac{3\beta+\alpha+\gamma-2[\beta]-1}{2\beta}}\right)ds
    \\&\lesssim t^{\frac{1-2\alpha-\gamma}{2\beta}}+ t^{1-2\tau}+t^{1-\tau-\frac{\beta+\alpha}{2\beta}}+I\left(\frac{2[\beta]-\beta}{2\beta},\frac{3\beta+\alpha+\gamma-2[\beta]-1}{2\beta}\right)t^{\frac{1-\alpha-\gamma}{2\beta}}
    \\&\underset{t\to0}{\longrightarrow}0,
\end{align*}
where we have used that all exponents of $t$ on the right-hand side are strictly positive.
\end{proof}

Similarly, we define the space for the perturbation $\rho$ of $U$ by
\begin{align*}
    X^{'}\triangleq\{\rho\in C^0((0,t_q]\times\mathbb{T}^2;\mathbb{R}^2);\|\rho\|_{X'}<\infty\},
\end{align*}
with norm
\begin{align*}
    \| \rho\|_{X^{'}}\triangleq\sup_{t\in (0,t_{q}]}t^{\tau}\|\rho\|_{L^{\infty}}<\infty,
\end{align*}
and the existence time
\begin{align*}
    t_q\triangleq\frac{1}{\lambda^{2\beta}_q}.
\end{align*}
 Unlike Proposition \ref{Prop rho}, we only need to establish the existence of the perturbation up to the norm inflation time $t_q$.

\begin{prop}
     There exist $\eta_0>0$ such that for all $\eta\in(0,\eta_0)$, there exist $\rho\in B_X(0,\eta)$ such that $u=U+\rho$ satisfy \eqref{e:NSf}. Moreover, for any $\epsilon>0$, one can choose $q$ sufficiently large such that
     \begin{align*}
         \|\rho\|_{B^{-\beta}_{\infty,\infty}(\mathbb{T}^2)} \le \epsilon.
     \end{align*}
     
\end{prop}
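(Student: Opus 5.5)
We follow the proof of Proposition \ref{Prop rho} almost verbatim, now on the shorter interval $(0,t_q]$ and in the space $X'$. The perturbation $\rho$ must solve
\begin{align*}
(\partial_t+(-\Delta)^{\beta})\rho+\mathbb{P}\operatorname{div}(U\otimes\rho+\rho\otimes U+\rho\otimes\rho)=\mathbb{P}\operatorname{div}G+\Delta^{[\beta]}\widetilde{G},\qquad \rho(0,\cdot)=0,
\end{align*}
so we look for a fixed point of the Duhamel map $T\rho$ given by \eqref{Tomega}, with $(v,F,\widetilde F)$ replaced by $(U,G,\widetilde G)$.

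The inputs are the following.
\begin{itemize}
\item The bounds on $G$ and $\nabla^\kappa\widetilde G$ come from Proposition \ref{Norm flation G}, interpolating between $m=0$ and $m=1$ to obtain $\nabla^\kappa$.
\item The coefficient bound for $U$ plays the role of \eqref{mvi2}: we need $\|U(t)\|_{L^\infty}\le\epsilon_1 t^{-\frac{\beta+\alpha+\epsilon_2}{2\beta}}$ for $t\in(0,t_q]$.
\end{itemize}

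For $U_2$ this follows from Proposition \ref{low bound of U}. We have $\lambda_q^{\beta+\alpha}e^{-\lambda_q^{2\beta}t}\lesssim \lambda_q^{-\epsilon_2}t^{-\frac{\beta+\alpha+\epsilon_2}{2\beta}}$, which is small once $q$ is large.

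For $U_1$ we use $1-e^{-2(2\pi\lambda_q)^{2\beta}t}\lesssim \lambda_q^{2\beta}t$, which gives $\|U_1(t)\|_{L^\infty}\lesssim\lambda^{s+1}\lambda_q^{2\beta}t$. Since $t\le t_q=\lambda_q^{-2\beta}$, we get $\lambda_q^{2\beta}t=(\lambda_q^{2\beta}t)^{\theta}(\lambda_q^{2\beta}t)^{1-\theta}\le(\lambda_q^{2\beta}t)^{\theta}$ for any $\theta\in(0,1]$. Therefore
\begin{align*}
t^{\frac{\beta+\alpha+\epsilon_2}{2\beta}}\|U_1(t)\|_{L^\infty}\lesssim \lambda^{s+1}\,t^{\frac{\beta+\alpha+\epsilon_2}{2\beta}}\le \lambda^{s+1}\lambda_q^{-(\beta+\alpha+\epsilon_2)},
\end{align*}
which tends to $0$ as $q\to\infty$ with $\lambda$ fixed.

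With these bounds, the Beta-integral computation of Proposition \ref{Prop rho} carries over unchanged. The suprema are now over $(0,t_q]\subset(0,1]$, and all powers of $t$ there have nonnegative exponents, so they are bounded by $1$. We obtain
\begin{align*}
\|T\rho\|_{X'}\le C\|\rho\|_{X'}^2+\tfrac14\|\rho\|_{X'}+\tfrac{\eta}{2},
\end{align*}
together with the analogous Lipschitz estimate. With $\eta_0=1/(8C)$, Banach's fixed point theorem gives $\rho\in B_{X'}(0,\eta)$.

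For the smallness of $\rho$ in $B^{-\beta}_{\infty,\infty}$ we repeat the final computation of Proposition \ref{Prop rho}, using \eqref{2.7}. It gives, for $t\le t_q$,
\begin{align*}
\|\rho(t)\|_{B^{-\beta}_{\infty,\infty}}\lesssim t^{\frac{1-2\alpha-\gamma}{2\beta}}+t^{1-2\tau}+t^{1-\tau-\frac{\beta+\alpha}{2\beta}}+t^{\frac{1-\alpha-\gamma}{2\beta}}.
\end{align*}
All exponents are strictly positive, and $t\le t_q=\lambda_q^{-2\beta}\to0$ as $q\to\infty$, so the right-hand side is below any $\epsilon$ once $q$ is large.

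One point needs checking. The term $\rho\otimes U$ is estimated through $\|U(s)\|_{L^\infty}$, and for $U_1$ this carries the large factor $\lambda^{s+1}$. I expect this to be the main obstacle: one must verify that this factor is beaten uniformly on $(0,t_q]$. The bound $\lambda^{s+1}\lambda_q^{-(\beta+\alpha+\epsilon_2)}$ above shows it is, provided $q$ is chosen after $\lambda=\delta^{-1}$ and $s$ are fixed. If one wants the alternative estimate $\|U_1\|_{L^\infty}\lesssim\lambda^{s+1}$ for $U_1$, the same choice of large $q$ handles it, since $t^{1-\tau-\frac{1}{2\beta}}\le t_q^{1-\tau-\frac{1}{2\beta}}$ carries a negative power of $\lambda_q$.
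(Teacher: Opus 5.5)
Your proposal is correct and follows the paper's argument: the same Duhamel fixed point in $X'$ on $(0,t_q]$, the Beta-integral bookkeeping of Proposition~\ref{Prop rho} reused for $U_2$, $G$ and $\widetilde G$, and the same $B^{-\beta}_{\infty,\infty}$ estimate made small through $t\le t_q\to 0$. The only difference is cosmetic and concerns the $U_1\otimes\rho$ term. You fold $U_1$ into an $\epsilon_1 t^{-\frac{\beta+\alpha+\epsilon_2}{2\beta}}$ bound using $t\le t_q$, whereas the paper keeps $\|U_1\|_{L^\infty}\lesssim\lambda^{s+1}$ and gains smallness from the factors $t_q^{1-\frac{1}{2\beta}}$ and $t_q^{1-\tau}$. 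Both exploit the short time window in the same way, as your last paragraph already notes.
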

\begin{proof}
    In order for $u= U+ \rho$ to solve \eqref{e:NSf} with initial data $U(0)$, the perturbation $\rho$ must satisfy
    \begin{align*}
        \begin{cases}
            (\partial_{t}+(-\Delta)^{\beta})\rho+\mathbb{P}\operatorname{div}\, (U\otimes \rho+\rho\otimes U+\rho\otimes \rho)=\mathbb{P}\operatorname{div}\, G+\Delta^{[\beta]}\widetilde{G},\\
            \rho(0,\cdot)=0.
        \end{cases}
    \end{align*}
    Now by the Duhamel formula, we solve $\omega$ as a fixed point of the map
    \begin{align}\label{Tomega}
        T'\rho(t)=\int^t_0 e^{(t-s)(-\Delta)^{\beta}}(\mathbb{P}\operatorname{div}(G-(\rho\otimes\rho+U\otimes\rho+\rho\otimes U))+\Delta^{[\beta]}\widetilde{G})ds.   
    \end{align}
 Similarly to the proof of Proposition \ref{Prop rho}, one gets
\begin{align*}        
\|T'\rho\|_{X'}&\lesssim
\sup_{0<t\le t_q} t^{\tau}\int_0^t (t-s)^{-\frac{1}{2\beta}}
\Bigl(s^{-2\tau}\|\rho\|_{X}^2+ s^{-\tau}\|\rho\|_{X}\|U_2(s)\|_{L^\infty}+ \|G(s)\|_{L^\infty}\Bigr)ds \\&\quad +\sup_{0<t\le t_q} t^{\tau}\int_0^t (t-s)^{-\frac{1}{2\beta}}s^{-\tau}\|\rho\|_{X}\|U_1(s)\|_{L^\infty}ds+\sup_{0<t\le t_q} \int_0^t (t-s)^{-\frac{2[\beta]-\kappa}{2\beta}}\|\nabla^{\kappa}\widetilde{G}(s)\|_{L^\infty}ds.   
\end{align*}
Since $U_2$, $G$ and $\widetilde{G}$ satisfy the same estimates as
$v$, $F$ and $\widetilde{F}$, respectively, the proof is identical to
that of Proposition~\ref{Prop rho}, except for the treatment of the second term. In fact, using \eqref{estimateU1}, we have
\begin{align*}
    \sup_{0<t\le t_q} t^{\tau}\int_0^t (t-s)^{-\frac{1}{2\beta}}s^{-\tau}\|\rho\|_{X}\|U_1(s)\|_{L^\infty}ds\lesssim \lambda^{s+1}I(\frac{1}{2\beta},\tau)(t_q)^{1-\frac{1}{2\beta}}\|\rho\|_{X}\leq \frac{1}{8}\|\rho\|_{X}.
\end{align*}
provided  that $q$ is chosen to be sufficiently large.
Applying Propositions \ref{low bound of U} and \ref{Norm flation G}, and arguing as in the proof of Proposition \ref{Prop rho}, we arrive at
\begin{align*}
    \|T'\rho\|_{X'}
&\le C\|\rho\|_{X'}^2+\frac{1}{4}\|\rho\|_{X'}+\frac{\eta}{2},
\end{align*}
and
\begin{align*}
    \|T'\rho_1-T'\rho_2\|_{X'}\le \|\rho_1-\rho_2\|_{X'}(C\|\rho_1\|_{X'}+C\|\rho_2\|_{X'}+\frac{1}{4})
\end{align*}
where $\eta$ is chosen sufficiently small and $q$ chosen sufficiently large.
Choosing $\eta_0=\frac{1}{8C}$, it follows that $T'$ is a contraction map on the $B_X(0,\eta)$ for all $\eta \in (0,\eta_0)$, and we conclude by the Banach fix point theorem.

Finally, we derive the desired estimate for $\rho$ near the initial time. By \eqref{2.7}, \eqref{Norm flation G}, Proposition \ref{low bound of U}, the interpolation inequality, and the fact that $|\omega|_{X}\le \eta_1$, we obtain for $0<t<t_q$
\begin{align*}
    \| \rho(t)\|_{B^{-\beta}_{\infty,\infty}(\mathbb{T}^2)}
    &\lesssim\int_0^t \|e^{(t-s)(-\Delta)^{\beta}}(G-(\rho\otimes\rho+U_1\otimes\rho+U_2\otimes\rho+\rho\otimes U_1+\rho\otimes U_2)\|_{B^{1-\beta}_{\infty,\infty}(\mathbb{T}^2)}ds\\
    &\quad+\int_0^t\|e^{(t-s)(-\Delta)^{\beta}}\Delta^{[\beta]}\nabla^{-\beta}\widetilde{G})\|_{L^{\infty}(\mathbb{T}^2)}ds
    \\&\lesssim \int_0^t\|G(s)\|_{L^{\infty}(\mathbb{T}^2)}+\|\rho(s)\|_{L^{\infty}(\mathbb{T}^2)}^2+\|\rho(s)\|_{L^{\infty}(\mathbb{T}^2)}\|U_2(s)\|_{L^{\infty}(\mathbb{T}^2)}ds
    \\&\quad+\int_0^t\|\rho\|_{L^{\infty}(\mathbb{T}^2)}\| U_1\|_{L^{\infty}(\mathbb{T}^2)}ds+\int^t_0(t-s)^{-\frac{2[\beta]-\beta}{2\beta}}\|\widetilde{G}(s)\|_{L^{\infty}(\mathbb{T}^2)}ds.
\end{align*}
For the second term, choosing $q$ large enough, we deduce
\begin{align*}
    \int_0^t\|\rho(s)\|_{L^{\infty}(\mathbb{T}^2)}\| U_1(s)\|_{L^{\infty}(\mathbb{T}^2)}ds\lesssim \lambda^{s+1}\|\rho\|_{X'}\int^t_0s^{-\tau}ds\lesssim (t_q)^{1-\tau}\leq \frac{1}{3}\epsilon,\quad\forall t\in[0,t_q].
\end{align*}
While the estimates for the other terms are identical to those in Proposition \ref{Prop rho}, we omit the details.
\end{proof}

\section{Proof of the main results}\label{Proof of the Non-Uniqueness}
\begin{proof}[\textbf{Proof of Proposition \ref{mainprop1}}]
Recall from definitions, we can write
    \begin{align*}
    u^{(1)}\triangleq v^{(1)}+\omega^{(1)}=\sum_{k\geq0\,even}v_k+\sum_{k\geq0\,odd}\bar{v}_k+\omega^{(1)},
    \end{align*}
    and 
    \begin{align*}
        u^{(2)}\triangleq v^{(2)}+\omega^{(2)}=\sum_{k\geq0\,odd}v_k+\sum_{k\geq0\,even}\bar{v}_k+\omega^{(2)}.
    \end{align*}

In Section \ref{sec:4}, we have shown that for $i \in \{1,2\}$, there exist $\omega^{(i)} \in C((0,1]\times\mathbb{T}^2;\mathbb{R}^2)$ obeying suitable equations such that $\{u^{(i)}\}_{i\in\{1,2\}}$ obey \eqref{e:NSf}. Moreover, by standard regularity theory, $\{u^{(i)}\}_{i\in\{1,2\}}$ are smooth on $(0,1]\times \mathbb{T}^2$.

Next we argue that both solutions are continuous on $t=0$ with respect to the space $B^{-\beta-\alpha-\varepsilon'}_{\infty,\infty}(\mathbb{T}^2)$. As shown in Proposition \ref{Prop rho}, $\{\omega^{(i)} \}_{i\in\{0,1\}}$ tends to $0$ in $B^{-\beta}_{\infty,\infty}(\mathbb{T}^2)$ when $t$ tends to $0$. Thus, $\{\omega^{(i)} \}_{i\in\{0,1\}}$ belongs to $C([0,1];B^{-\beta-\alpha-\varepsilon'}_{\infty,\infty}(\mathbb{T}^2))$. It remains to show $\{v^{(i)} \}_{i\in\{0,1\}}\in C([0,1];B^{-\beta-\alpha-\varepsilon'}_{\infty,\infty}(\mathbb{T}^2))$. Since $\{v^{(i)} \}_{i\in\{0,1\}}\in C^{\infty}((0,1]\times\mathbb{T}^2)$, it suffice to prove that $\{v^{(i)} \}_{i\in\{0,1\}}$ are continuous at $t=0$ in $B^{-\beta-\alpha-\varepsilon'}_{\infty,\infty}(\mathbb{T}^2)$. Recalling the definition of $v^{(i)}$ in Definition \ref{defiofvk}, we have
\begin{align}
    \|v^{(i)}(t)-V^0\|_{B^{-\beta-\alpha-\varepsilon'}_{\infty,\infty}}\lesssim\sum_{k\in\mathbb{N}}\|v^0_k\|_{B^{-\beta-\alpha-\varepsilon'}_{\infty,\infty}}\left((e^{-(2\pi N_k)^{2\beta}t}-1)+(e^{-2\cdot(2\pi N_{k+1})^{2\beta}t}-1)\right).\label{estimaeintial}
\end{align}
Following the proof of Proposition \ref{propofinitial}, one can infers that
\begin{gather}
    \|v^0_k\|_{B^{-\beta-\alpha-\varepsilon'}_{\infty,\infty}}\lesssim N_k^{-\varepsilon'},\quad\quad\forall k\in\mathbb{N}. \label{-vareepsilon}
\end{gather}
Substituting \eqref{-vareepsilon} into \eqref{estimaeintial}, we arrive at
\begin{align*}
    \|v^{(i)}(t)-V^0\|_{B^{-\beta-\alpha-\varepsilon'}_{\infty,\infty}}\lesssim\sum_{k\leq M}
    N_k^{-\varepsilon'}\left((e^{-(2\pi N_k)^{2\beta}t}-1)+(e^{-2\cdot(2\pi N_{k+1})^{2\beta}t}-1)\right)+\sum_{k\geq M}
    N_k^{-\varepsilon'}\underset{t\to0}{\longrightarrow}0,
\end{align*}
where we first select large $M$ to make the second term small, then fix $M$ and take $t$ small to make the first term small as well.

Finally, we prove that these two solutions are distinct by considering a time scale when all but the lowest frequency mode should have dissipated away, for example $t_0=N_0^{-2\beta}$. By the triangle inequality, we get
\begin{align}
\|u^{(1)}(t_0)-u^{(2)}(t_0)\|_{L^\infty(\mathbb{T}^2)}
&\ge \|v_0\|_{L^\infty(\mathbb{T}^2)} - \sum_{k\ge 1}\|v_k(t_0)\|_{L^\infty(\mathbb{T}^2)}  - \sum_{k\ge 0}\|\bar{v}_k(t_0)\|_{L^\infty(\mathbb{T}^2)} \nonumber\\
 &\quad- \|\omega^{(1)}(t_0)\|_{L^\infty(\mathbb{T}^2)}  - \|\omega^{(2)}(t_0)\|_{L^\infty(\mathbb{T}^2)}.\label{all}
\end{align}
By \eqref{nablamvi}, one obtains 
\begin{align*}
    \sum_{k\ge 1}\|v_k(t_0)\|_{L^\infty(\mathbb{T}^2)}&\lesssim \sum_{k\ge 1}N_k^{\beta+\alpha}\exp{(-N_k^{2\beta}N_0^{-2\beta})}\lesssim N_1^{\beta+\alpha}\exp{(-N_1^{2\beta}N_0^{-2\beta})},
\end{align*}
and
\begin{align*}
    \sum_{k\ge 0}\|\bar{v}_k(t_0)\|_{L^{\infty}(\mathbb{T}^2)}\lesssim\sum_{k\ge 0}N_k^{\beta+\alpha}\exp{(-N_{k+1}^{2\beta}N_0^{-2\beta})}\lesssim N_0^{\beta+\alpha}\exp{(-N_1^{2\beta}N_0^{-2\beta})}.
\end{align*}
Therefore, we have
\begin{align}
    \sum_{k\ge 1}\|v_k(t_0)\|_{L^\infty(\mathbb{T}^2)}+ \sum_{k\ge 0}\|\bar{v}_k(t_0)\|_{L^{\infty}(\mathbb{T}^2)}\leq N_0^{\beta}.\label{y1}
\end{align}
 For the $\{\omega^{(i)}\}_{i\in\{1,2\}}$, we have by Proposition \ref{Prop rho} and definition of $\tau$, 
\begin{align}
    \|\omega^{(i)}(t_0)\|_{L^{\infty}(\mathbb{T}^2)}\lesssim \eta t_0^{-\tau}\lesssim N_0^{2\beta\tau}\lesssim N_0^{\beta-\alpha}\leq N_0^{\beta}.\label{y3}
\end{align}
Finally, we establish the exact value of $v_0(t_0,x)$ on $L^{\infty}(\mathbb{T}^2)$. By Definition \ref{Apropdefivk} and Definition \ref{defiofvk}, we readily obtain 
\begin{align}
    \|v_0(t_0,\cdot)\|_{L^{\infty}(\mathbb{T}^2)}= N_0^{\beta+\alpha}e^{-(2\pi)^{2\beta}}.\label{y5}
\end{align}
 Now together with \eqref{y1}-\eqref{y5}, we then have
\begin{align*}
    \|u^{(1)}(t_0)-u^{(2)}(t_0)\|_{L^\infty(\mathbb{T}^2)}\gtrsim N_0^{\beta+\alpha}e^{-(2\pi)^{2\beta}}-N_0^\beta.
\end{align*}
 By taking $A$ sufficiently large, we conclude that $u^{(1)}$ and $u^{(2)}$ are distinct.
\end{proof}

\begin{proof}[\textbf{Proof of Proposition \ref{mainprop2}}]
    By the definition of $U_1$ and $\rho$, we have
    \begin{align*}
        u(x,0)=U(x,0)+\rho(x,0)=U_1(x,0)+U_2(x,0)+\rho(x,0) =U_2(x,0).
    \end{align*}
     From the definition of $U_2$ in \eqref{U_1} and the proof of Proposition \ref{propofinitial}, it holds
    \begin{align*}
    \|u(\cdot,0)\|_{B^{-\beta-\alpha-\varepsilon}_{\infty,\infty}(\mathbb{T}^2)}&\le
        \|U_2(\cdot,0)\|_{B^{-\beta-\alpha-\varepsilon}_{\infty,\infty}(\mathbb{T}^2)}
         \\&\lesssim {\lambda_q}^{\beta+\alpha-2[\beta]-1} \left\|\Delta^{[\beta]}\nabla^{\perp}\!\left(\sum_{\xi}\bar{a}_{\xi}(\cdot ,0)e^{2\pi i \lambda_q\xi\cdot }\right)\right\|_{B^{-\beta-\alpha-\varepsilon}_{\infty,\infty}(\mathbb{T}^2)}
         \\&\lesssim {\lambda_q}^{-\varepsilon}.
    \end{align*}
    Hence, by taking $q$ sufficiently large, we conclude that
    \begin{align*}
        \|u(\cdot,0)\|_{B^{-\beta-\alpha-\varepsilon}_{\infty,\infty}(\mathbb{T}^2)}< \delta. 
    \end{align*}
    
We now prove that $U$ becomes large as claimed. Recall the norm inflation time $t_q=\frac{1}{\lambda_q^{2\beta}}$, $s>\beta+\alpha$ and $\lambda=\delta^{-1}$.
 We define $\widetilde{s}\triangleq \min\{s,2[\beta]+1\}$.   
     Using Definition \ref{propdefiU1} and Proposition \ref{Prop rho}, we arrive at
    \begin{align}\label{U_s}
        \|u(\cdot,t_q)\|_{B^{-s}_{\infty,\infty}(\mathbb{T}^2)}
&\ge \|U_1(\cdot,t_q)\|_{B^{-s}_{\infty,\infty}(\mathbb{T}^2)} - \|U_2(\cdot,t_q)\|_{B^{-s}_{\infty,\infty}(\mathbb{T}^2)}-\|\rho(\cdot,t_q)\|_{B^{-s}_{\infty,\infty}(\mathbb{T}^2)} \nonumber\\
&\gtrsim  \|U_1(\cdot,t_q)\|_{B^{-s}_{\infty,\infty}(\mathbb{T}^2)} - \|U_2(\cdot,t_q)\|_{B^{-\widetilde{s}}_{\infty,\infty}(\mathbb{T}^2)}-\|\rho(\cdot,t_q)\|_{B^{-\beta}_{\infty,\infty}(\mathbb{T}^2)}\nonumber
\\&\gtrsim \lambda e^{-(2\pi \lambda)^{2\beta}t_q}\big(1-e^{-2 (2\pi \lambda_q)^{2\beta}t_q}\big)
- \lambda_q^{\beta+\alpha-\widetilde{s}}e^{-\lambda_q^{2\beta}t_q}
-\epsilon
\nonumber\\
&\ge \delta^{-1},\nonumber
    \end{align}
which $q$ is chosen to be large enough.  
    
\end{proof}

\begin{proof}[\textbf{Proof of Theorem \ref{maintheo3}}]
     Let $\alpha<\min\{0,\beta-1\}$, $\beta>\frac{1}{2}$ and the dimension $d\geq2$. Suppose $u_0\in  \dot{B}^{-\beta-\alpha}_{\infty,\infty}(\mathbb{T}^2)$.
     For any $T>0$, we define
    \begin{align*}
        D_T\triangleq\{ u\in C((0,T)\times\mathbb{T}^2;\mathbb{R}^2); \nabla \cdot u=0 \,\,\text{and} \,\, \|u\|_{D_T}<\infty \}, 
    \end{align*}
    with norm
    \begin{align*}
        \|u\|_{D_T}\triangleq\sup_{t\in (0,T]}t^{\frac{\beta+\alpha}{2\beta}}\|u(t)\|_{L^{\infty}(\mathbb{T}^d)}.
    \end{align*}
     
     The existence time $T(u_0)$ is defined by
\begin{align*}
    T(u_0)\triangleq\left(8C_1\|u_0\|_{\dot{B}_{\infty,\infty}^{-(\beta+\alpha)}(\mathbb{T}^d)}I\left(\frac{1}{2\beta},\frac{\beta+\alpha}{\beta}\right)\right)^{\frac{2\beta}{\alpha+1-\beta}},
\end{align*}
where $C_1$ is a universal constant appearing in the subsequent estimates.
 We will use the Banach fixed point theorem to prove the existence of solutions on $D_{T(u_0)}$.
The mild solution to \eqref{e:NSf} is characterized as the fixed point of the operator
    \begin{align*}
        (Au)(t,x) = e^{-t(-\Delta)^\beta} u_0(x) - \int_0^t e^{-(t-s)(-\Delta)^\beta} \mathbb{P}\operatorname{div}(u\otimes u)(s,x)\,ds
    \end{align*}

   We first estimate $\| Au(t)\|_{L^{\infty}(\mathbb{T}^d)}$ for $t \in (0,T(u_0)]$,
    \[
\|Au(t)\|_{L^\infty(\mathbb{T}^d)}
\le \|e^{-t(-\Delta)^\beta} u_0\|_{L^\infty(\mathbb{T}^d)}
+ \int_0^t (t-s)^{-\frac{1}{2\beta}} \|u(s)\|_{L^\infty(\mathbb{T}^d)}^2 \, ds.
\]
Then, using an equivalent characterization of Besov spaces and a change of variables, we obtain
\[
\begin{aligned}
\|Au\|_{D_{T(u_0)}}
&\le \sup_{t\in (0,T(u_0)]} t^{\frac{\beta+\alpha}{2\beta}} \|e^{-t(-\Delta)^\beta} u_0\|_{L^\infty(\mathbb{T}^d)}
+ \sup_{t\in(0,T(u_0)]} t^{\frac{\beta+\alpha}{2\beta}} \int_0^t (t-s)^{-\frac{1}{2\beta}} \|u(s)\|_{L^\infty(\mathbb{T}^d)}^2  \, ds \\
&\le C_1 \|u_0\|_{\dot{B}_{\infty,\infty}^{-(\beta+\alpha)}(\mathbb{T}^d)}
+ \|u\|_{D_{T(u_0)}}^2 \sup_{t\in(0,T(u_0)]} t^{\frac{\beta+\alpha}{2\beta}} \int_0^t (t-s)^{-\frac{1}{2\beta}} s^{-\frac{\beta+\alpha}{\beta}}  ds
\\&\le C_1 \|u_0\|_{\dot{B}_{\infty,\infty}^{-(\beta+\alpha)}(\mathbb{T}^d)}
+ I\left(\frac{1}{2\beta},\frac{\beta+\alpha}{\beta}\right)\|u\|_{D_{T(u_0)}}^2 \sup_{t\in(0,T(u_0)]} t^{\frac{\beta-\alpha-1}{2\beta}}.
\end{aligned}
\]

Using the facts that \(\alpha<\min\{0,\beta-1\}\) and \(\beta>\frac12\), we see that both coefficients in the Beta integral $I$ are less than 1, while the exponent of t appearing inside the supremum is strictly positive. Substitute the definition of \(T(u_0)\) into the supremum, we obtain
\begin{align*}
    \|Au\|_{D_{T(u_0)}}\le C_1 \|u_0\|_{\dot{B}_{\infty,\infty}^{-(\beta+\alpha)}(\mathbb{T}^d)}+(8C_1 \|u_0\|_{\dot{B}_{\infty,\infty}^{-(\beta+\alpha)}(\mathbb{T}^d)})^{-1}\|u\|_{D_{T(u_0)}}^2 
\end{align*}
Moreover, by a analogous way, it follows
\begin{align*}
\|Au_1 - Au_2\|_{D_{T(u_0)}} &\le (8C_1 \|u_0\|_{\dot{B}_{\infty,\infty}^{-(\beta+\alpha)}(\mathbb{T}^d)})^{-1}(\|u_1\|_{D_{T(u_0)}}+\|u_2\|_{D_{T(u_0)}})\|u_1-u_2\|_{D_{T(u_0)}}
.
\end{align*}
Therefore, $A$ is a contraction mapping in $B_{D_{T(u_0)}}(0,r_0)$ with $r_0=2 C_1 \|u_0\|_{\dot{B}_{\infty,\infty}^{-(\beta+\alpha)}(\mathbb{T}^d)}$. By the Banach fixed point theorem, we obtain the existence of a solution with initial data \(u_0\).
\end{proof}

\begin{proof}[\textbf{Proof of Theorem \ref{maintheo4}}]
      Let $\alpha<0$ with $\alpha\leq\beta-1$, $\beta>\frac{1}{2}$, the dimension $d\geq2$ and $T>0$. Suppose $u_0\in  \dot{B}^{-\beta-\alpha}_{\infty,\infty}(\mathbb{T}^2)$ satisfying
\begin{align*}
    \|u_0\|_{\dot{B}_{\infty,\infty}^{-\beta-\alpha}(\mathbb{T}^d)}\leq\frac{1}{8C_1C_2}.
\end{align*}
where $C_1$ is the universal constant appearing in the proof of Theorem \ref{maintheo3}, and $C_2$ is a constant to be specified later.
        As in Theorem \ref{maintheo3}, it yields the following estimate
\[
\begin{aligned}
\|Au\|_{D_T}
&\le C_1 \|u_0\|_{\dot{B}_{\infty,\infty}^{-(\beta+\alpha)}(\mathbb{T}^d)}
+ I\left(\frac{1}{2\beta},\frac{\beta+\alpha}{\beta}\right)\|u\|_{D_T}^2 \sup_{t\in(0,T]} t^{\frac{\beta-\alpha-1}{2\beta}} 
\\&\le C_1\|u_0\|_{\dot{B}_{\infty,\infty}^{-\beta-\alpha}(\mathbb{T}^d)} + C_2 \|u\|_{D_T}^2.
\end{aligned}
\]
where we have used the assumptions on $\alpha$ and $\beta$, and $C_2$ is a constant depending on $\alpha$, $\beta$ and $T$. Similarly, it follows that, for any $u_1, u_2 \in {D_T}$.
\[
\|Au_1 - Au_2\|_{D_T} \le C_2 (\|u_1\|_{D_T}+\|u_2\|_{D_T} )\|u_1 - u_2\|_{D_T}.
\]
Since $\|u_0\|_{\dot{B}_{\infty,\infty}^{-\beta-\alpha}(\mathbb{T}^d)}\leq\frac{1}{8C_1C_2}$, $A$ is a contraction mapping in $B_{D_T}(0,r_0)$ with $r_0=2C_1\|u_0\|_{\dot{B}_{\infty,\infty}^{-\beta-\alpha}(\mathbb{T}^d)}$. By the Banach fixed point theorem, we obtain the existence of a solution with initial data \(u_0\).

Moreover, for the case $\beta\in(\frac{1}{2},1)$, $\alpha=\beta-1$, the existence time $T$ can be taken to be infinite and the proof is analogous. Indeed, Since $\alpha=\beta-1<0$, the required estimate follows from
\begin{align*}
    I\left(\frac{1}{2\beta},\frac{\beta+\alpha}{\beta}\right)\sup_{t\in(0,\infty]} t^{\frac{\beta-\alpha-1}{2\beta}}\lesssim1.
\end{align*}

\end{proof}

\appendix

\section{Tools from convex integration }
In this section, we recall some useful tools which is developed in the convex integration method. 

Let $\mathbb{S}^1$ be the unit circle and $\mathbb{Q}$ the set of rational numbers. We denote $B_{\frac{1}{7}}(\mathrm{Id})$ is the ball of radius $\frac{1}{7}$ centered at $\mathrm{Id}$ in the space of $2 \times 2$ symmetric matrices. The following geometric lemma, originally due to \cite{nashgeomtric} and reformulated in \cite{2dgeomtric}, will play a key role.

\begin{lemm}\label{geometric}
    There exists a finite set $\Lambda \subset \mathbb{S}^{1} \cap \mathbb\,{\mathbb{Q}}^2$ such that $\Lambda=-\Lambda$ and each $\xi\in\Lambda$ is equipped with an orthonormal basis $(\xi,\bar{\xi})$ satisfying $\overline{(-\xi)}=-\bar{\xi}$. Moreover, there exist smooth positive functions $a_{\xi}: B_{\frac{1}{7}}(\mathrm{Id}) \to \mathbb{R}$ satisfying $a_{\xi}=a_{-\xi}$, such that we have the following identity:
\begin{gather}
    S = \sum_{\xi \in \Lambda} a_{\xi}^2(S) \bar{\xi} \otimes \bar{\xi} ,\quad\forall S\in B_{\frac{1}{7}}(\mathrm{Id}).
\end{gather}
\end{lemm}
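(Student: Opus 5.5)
The plan is the classical Nash-type linear-algebra argument: pick three rational unit vectors whose rank-one projections span the three-dimensional space of symmetric $2\times 2$ matrices and represent $\mathrm{Id}$ with strictly positive weights. Invert the resulting linear system, check that the weights stay positive on $B_{\frac17}(\mathrm{Id})$, and then symmetrize over $\pm$ to build $\Lambda$.

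\textbf{Step 1: choose the rank-one directions.} Let
\begin{align*}
\eta_1=(1,0),\qquad \eta_2=\big(\tfrac35,\tfrac45\big),\qquad \eta_3=\big(\tfrac35,-\tfrac45\big),
\end{align*}
all of which lie in $\mathbb{S}^1\cap\mathbb{Q}^2$. Writing $S=\sum_{j=1}^3\lambda_j\,\eta_j\otimes\eta_j$ and comparing entries gives
\begin{align*}
S_{11}=\lambda_1+\tfrac{9}{25}(\lambda_2+\lambda_3),\qquad S_{22}=\tfrac{16}{25}(\lambda_2+\lambda_3),\qquad S_{12}=\tfrac{12}{25}(\lambda_2-\lambda_3).
\end{align*}
This system is uniquely solvable, with
\begin{align*}
\lambda_1(S)=S_{11}-\tfrac{9}{16}S_{22},\qquad \lambda_{2,3}(S)=\tfrac{25}{32}S_{22}\pm\tfrac{25}{24}S_{12}.
\end{align*}
Thus the three projections form a basis, and each $\lambda_j$ is a linear function of $S$.

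\textbf{Step 2: positivity on the ball.} At $S=\mathrm{Id}$ we get $\lambda_1=\tfrac{7}{16}$ and $\lambda_2=\lambda_3=\tfrac{25}{32}$. For $S\in B_{\frac17}(\mathrm{Id})$, every entry of $S-\mathrm{Id}$ has modulus at most $\tfrac17$, for any of the usual matrix norms. Hence
\begin{align*}
\lambda_1\ge \tfrac{7}{16}-\tfrac17-\tfrac{9}{112}=\tfrac{24}{112}>0,
\qquad
\lambda_{2,3}\ge \tfrac{25}{32}\cdot\tfrac67-\tfrac{25}{24}\cdot\tfrac17=\tfrac{25}{7}\cdot\tfrac{14}{96}>0.
\end{align*}
This quantitative check is the only step with real content. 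If a different norm convention made the constants fail, I would simply switch to another Pythagorean triple, for example $(\tfrac{5}{13},\tfrac{12}{13})$, and redo the same computation.

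\textbf{Step 3: symmetrize and define $\Lambda$ and $a_\xi$.} Let $J$ denote rotation by $\pi/2$. Set
\begin{align*}
\Lambda=\{\pm J\eta_j: j=1,2,3\},
\end{align*}
which consists of six distinct rational unit vectors and satisfies $\Lambda=-\Lambda$. For $\xi\in\Lambda$ define $\bar\xi=-J\xi$. Then $(\xi,\bar\xi)$ is orthonormal, $\overline{(-\xi)}=-\bar\xi$, and $\bar\xi=\pm\eta_j$ when $\xi=\pm J\eta_j$. Finally set
\begin{align*}
a_{\pm J\eta_j}(S)=\sqrt{\lambda_j(S)/2}.
\end{align*}
These functions are smooth and positive on $B_{\frac17}(\mathrm{Id})$, because they are square roots of positive affine functions. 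They are even in $\xi$ by construction. Since $\bar\xi\otimes\bar\xi=\eta_j\otimes\eta_j$ for both signs, the sum over $\Lambda$ gives
\begin{align*}
\sum_{\xi\in\Lambda}a_\xi^2(S)\,\bar\xi\otimes\bar\xi=\sum_{j=1}^3 2\cdot\tfrac{\lambda_j(S)}{2}\,\eta_j\otimes\eta_j=S,
\end{align*}
which is the identity claimed in Lemma~\ref{geometric}.
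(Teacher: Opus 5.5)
Your proof is correct. The inversion formulas, the values $\lambda_1(\mathrm{Id})=\frac{7}{16}$ and $\lambda_{2,3}(\mathrm{Id})=\frac{25}{32}$, the lower bounds $\frac{3}{14}$ and $\frac{25}{48}$ on $B_{\frac17}(\mathrm{Id})$, and the symmetrization giving $\overline{(-\xi)}=-\bar\xi$ and $a_\xi=a_{-\xi}$ all check out.

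The paper gives no proof of its own; it cites Nash's lemma and its two-dimensional reformulation. Their standard proof is the same argument: rational rank-one tensors spanning $S^{2\times2}(\mathbb{R})$ with $\mathrm{Id}$ a positive combination, followed by square roots of the positive linear coefficients. Your explicit $(3,4,5)$ choice makes that argument concrete and also confirms the specific radius $\frac17$.
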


We recall the antidivergence operators $\mathcal{R}$ that are introduced in \cite{D1}. 
\begin{lemm}\label{def of antidiv}
    There exists a linear operator $\mathcal{R}:C^\infty(\mathbb{T}^2;\mathbb{R}^2)\rightarrow C^\infty(\mathbb{T}^2;S^{2\times 2})$ satisfying
    \begin{align*}
        \operatorname{div}\,\mathcal{R}f&=f-\fint_{\mathbb{T}^2}f,\quad\forall f\in C^\infty(\mathbb{T}^2;\mathbb{R}^2),
    \end{align*}
\end{lemm}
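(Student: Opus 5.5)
This is the standard antidivergence on $\mathbb{T}^2$, and I would build it explicitly on Fourier modes. For $f\in C^\infty(\mathbb{T}^2;\mathbb{R}^2)$, first subtract the mean, setting $\tilde f=f-\fint_{\mathbb{T}^2}f$. Then solve a vector Poisson equation: let $u=\Delta^{-1}\tilde f$, defined on each nonzero mode $l$ by $\hat u(l)=-(4\pi^2|l|^2)^{-1}\hat f(l)$ and with $\hat u(0)=0$. This $u$ is smooth with zero mean.

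I would then define, following De Lellis--Sz\'ekelyhidi,
\begin{align*}
\mathcal{R}f\triangleq \nabla u+(\nabla u)^{\top}-(\operatorname{div}u)\,\mathrm{Id}.
\end{align*}
This formula is linear in $f$, it maps into symmetric matrices, and it preserves smoothness because each Fourier multiplier involved is homogeneous of degree $-1$ and smooth away from $0$. Smoothness follows since rapid decay of $\hat f$ is preserved.

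The only thing left to check is the divergence identity, which is a computation:
\begin{align*}
\partial_j(\partial_j u^i+\partial_i u^j)-\partial_i(\partial_j u^j)=\Delta u^i+\partial_i\operatorname{div}u-\partial_i\operatorname{div}u=\Delta u^i=\tilde f^i .
\end{align*}
This gives $\operatorname{div}\mathcal{R}f=f-\fint_{\mathbb{T}^2} f$.

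No step here is a real obstacle. The one point needing care is the zero mode, and removing the mean handles it, since without that $\Delta^{-1}$ is not defined. For later use in \eqref{eR}, I would also note that $\mathcal{R}$ is a Calder\'on--Zygmund-type operator of order $-1$, and this is what gives its $C^\mu$ bounds.
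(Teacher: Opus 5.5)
Your proposal is correct and is essentially the construction the paper relies on. The paper gives no proof of this lemma and simply cites the De Lellis--Sz\'ekelyhidi antidivergence, which is built the same way: solve $\Delta u=f-\fint_{\mathbb{T}^2}f$ with $\fint_{\mathbb{T}^2}u=0$, then take a symmetric combination of first derivatives of $u$ whose divergence is $\Delta u$. Your $\nabla u+(\nabla u)^{\top}-(\operatorname{div}u)\,\mathrm{Id}$ is the standard two-dimensional (trace-free) version of that operator. Your divergence computation, zero-mode treatment and smoothness argument are all fine, and real-valuedness of $\mathcal{R}f$ also follows because the multiplier is real and even.
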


Variants of the following stationary phase lemma can be found in the works \cite{2,D1}.
\begin{lemm}
    Let $\mu>0$, for any smooth function $f$ and $m>0$, we have
    \begin{align}
        \|\mathcal{R}(f(x)e^{2\pi i\lambda kx})\|_{C^{\mu}}\lesssim_{\mu,m}\lambda^{-1+\mu}\|f\|_{L^\infty}+ \lambda^{-m+\mu}\|\nabla^mf\|_{L^\infty}+\lambda^{-m}\|\nabla^mf\|_{C^{\mu}},\label{eR}
    \end{align}
\end{lemm}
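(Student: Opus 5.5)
The estimate \eqref{eR} concerns the antidivergence $\mathcal{R}$ applied to $f(x)e^{2\pi i\lambda k\cdot x}$ with $k\in\mathbb{Z}^2\setminus\{0\}$ fixed. I would prove it by the standard stationary phase argument: integrate by parts in Fourier space, using the fact that the oscillation $e^{2\pi i\lambda k\cdot x}$ shifts all frequencies by $\lambda k$.

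Take $\mathcal{R}$ to be the operator of \cite{D1}. It is a zeroth-order combination of Riesz-type Fourier multipliers composed with $\Delta^{-1}\nabla$, and hence homogeneous of degree $-1$ on mean-zero functions. Its Schauder-type bound is
\begin{equation*}
\|\mathcal{R}g\|_{C^{\mu}}\lesssim_\mu \|g\|_{C^{\mu-1+}}.
\end{equation*}
More concretely, $\|\mathcal{R}g\|_{C^\mu}\lesssim \|g\|_{C^\mu}$, and $\mathcal{R}$ gains one derivative on functions whose frequencies lie in an annulus $|\zeta|\sim\lambda$.

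Split $f=P_{\le \lambda/2}f+P_{>\lambda/2}f$ using a Littlewood–Paley cutoff at frequency $c\lambda$, with $c$ small relative to $|k|$.

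\emph{Low-frequency piece.} The product $P_{\le c\lambda}f\,e^{2\pi i\lambda k\cdot x}$ has Fourier support in the annulus $\{|\zeta-\lambda k|\le c\lambda\}$, which satisfies $|\zeta|\sim\lambda$. On this annulus $\mathcal{R}$ acts like a smooth multiplier of size $\lambda^{-1}$. Bernstein's inequality then gives
\begin{equation*}
\|\mathcal{R}(P_{\le c\lambda}f\,e^{i\lambda k\cdot x})\|_{C^\mu}\lesssim \lambda^{-1+\mu}\|f\|_{L^\infty}.
\end{equation*}
Here the kernel of the localized multiplier has $L^1$ norm $O(\lambda^{-1})$ uniformly by scaling, and the $C^\mu$ seminorm costs $\lambda^\mu$ because of the frequency localization.

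\emph{High-frequency piece.} For the remainder $P_{>c\lambda}f$, the boundedness of $\mathcal{R}$ on $C^\mu$ and on $L^\infty$ (the latter up to log losses, which can be avoided by staying in $C^\mu$) gives two contributions. First,
\begin{equation*}
\|P_{>c\lambda}f\|_{L^\infty}\lesssim\lambda^{-m}\|\nabla^m f\|_{L^\infty}.
\end{equation*}
Second, the Hölder seminorm of $P_{>c\lambda}f\,e^{i\lambda k\cdot x}$ is controlled by $\lambda^{\mu}\|P_{>c\lambda}f\|_{L^\infty}+[P_{>c\lambda}f]_{C^\mu}$. These give the terms
\begin{equation*}
\lambda^{-m+\mu}\|\nabla^m f\|_{L^\infty}\quad\text{and}\quad \lambda^{-m}\|\nabla^m f\|_{C^\mu},
\end{equation*}
respectively. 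The second bound uses $[P_{>c\lambda}f]_{C^\mu}\lesssim\lambda^{-m}\|\nabla^mf\|_{C^\mu}$, obtained by a dyadic sum of Bernstein estimates.

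\emph{Alternative route.} One could instead follow \cite{D1} directly, iterating the identity
\begin{equation*}
f e^{i\lambda k\cdot x}=\operatorname{div}(\ldots)+\lambda^{-1}(\nabla f)e^{i\lambda k\cdot x}
\end{equation*}
$m$ times and bounding the final remainder with Schauder estimates for $\mathcal{R}$.

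\emph{Main obstacle.} The delicate step is the uniform-in-$\lambda$ multiplier estimate for $\mathcal{R}$ on the annulus. The mean-zero subtraction is harmless because $\lambda k\neq0$ and the low-frequency piece has no zero mode. To handle it, I would verify Mikhlin-type derivative bounds for the symbol of $\mathcal{R}$ after rescaling $\zeta\mapsto\lambda\zeta$ on the torus, using Poisson summation to transfer the resulting kernel bounds from $\mathbb{R}^2$ to $\mathbb{T}^2$.
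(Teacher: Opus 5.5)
Your argument is correct, but it follows a different route from the one the paper relies on. The paper gives no proof of this lemma and only recalls it from \cite{2,D1}. The standard argument there is the physical-space one you list as your alternative route. One writes $f e^{2\pi i\lambda k\cdot x}=\operatorname{div}\bigl(\tfrac{f\otimes k}{2\pi i\lambda|k|^2}e^{2\pi i\lambda k\cdot x}\bigr)-\tfrac{(k\cdot\nabla)f}{2\pi i\lambda|k|^2}e^{2\pi i\lambda k\cdot x}$ and iterates this $m$ times. The divergence terms are bounded using the $C^\mu$-boundedness of the zeroth-order operator $\mathcal{R}\operatorname{div}$, the final remainder using $\|\mathcal{R}g\|_{C^\mu}\lesssim\|g\|_{C^\mu}$, and the intermediate terms by interpolation. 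This produces exactly the three terms in \eqref{eR}.

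Your Littlewood--Paley split instead isolates the main contribution, namely $P_{\le c\lambda}f$ times the phase. Its spectrum lies in an annulus of radius $\sim\lambda$, where $\mathcal{R}$ acts as a smooth multiplier of size $\lambda^{-1}$, and the rest is treated as a tail. This is more transparent and in fact slightly sharper. Since $\mathcal{R}$ has order $-1$, its kernel is integrable and it maps $L^\infty$ into $C^\mu$ for $\mu<1$ with no logarithmic loss; that loss concerns zeroth-order operators such as $\mathbb{P}$ or $\mathcal{R}\operatorname{div}$. So the tail is already bounded by $\lambda^{-m}\|\nabla^m f\|_{L^\infty}$ alone.

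The price is that you must use the specific structure of $\mathcal{R}$ as a Fourier multiplier homogeneous of degree $-1$ and smooth off the origin. You also need the Poisson summation step to get uniform kernel bounds on $\mathbb{T}^2$. The integration-by-parts route needs only H\"older/Schauder bounds for $\mathcal{R}$ and $\mathcal{R}\operatorname{div}$, so it applies to any antidivergence with those bounds.

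Both arguments tacitly assume $\mu\in(0,1)$, $\lambda\ge 1$ and $\lambda k\in\mathbb{Z}^2\setminus\{0\}$. This is the setting in which the paper actually applies the lemma, with $\mu=\gamma/2$.
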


The following stationary flow lemma is originally due to \cite{2dlemmaarxiv}. 
\begin{lemm}\label{2dlemma}
     Given $\Lambda\subseteq\mathbb{S}^1\cap\mathbb{Q}^2$ such that $-\Lambda=\Lambda$. Then for any $b_k\in\mathbb{R}$ with $b_k=b_{-k}$, the vector field
\[
W(x)=\sum_{\xi\in\Lambda}b_\xi \mathrm{i}\bar{\xi}e^{2\pi\mathrm{i}\xi\cdot x},\ \xi\perp\bar{\xi},
\]
is real-valued, divergence-free and satisfies
\[
\operatorname{div}_x(W\otimes W)=\frac{1}{2}\nabla_x\left(|W|^2-|\sum_{\xi\in\Lambda}b_\xi e^{2\pi\mathrm{i}\xi\cdot x}|^2\right).
\]
\end{lemm}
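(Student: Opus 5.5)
The plan is to avoid expanding $W\otimes W$ into a double sum over $\Lambda\times\Lambda$. Instead I would write $W$ as a perpendicular gradient of a Laplace eigenfunction and use the two-dimensional vector identity for $(W\cdot\nabla)W$.

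\emph{Step 1: reality and divergence-free.} For reality, the complex conjugate of the $\xi$-th term is $-\mathrm{i}\,b_\xi\bar{\xi}e^{-2\pi\mathrm{i}\xi\cdot x}$. Using $b_{-\xi}=b_\xi$, $\overline{(-\xi)}=-\bar{\xi}$ and $\Lambda=-\Lambda$ from Lemma \ref{geometric}, this is exactly the $(-\xi)$-th term, so $\overline{W}=W$. For the divergence, each term has divergence $2\pi\mathrm{i}\,(\xi\cdot\bar{\xi})\,\mathrm{i} b_\xi e^{2\pi\mathrm{i}\xi\cdot x}=0$, because $\xi\perp\bar{\xi}$.

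\emph{Step 2: stream function.} Fix the convention that $\bar{\xi}=\xi^{\perp}$ for a fixed rotation by $\pi/2$. If the orientation varies with $\xi$, I would absorb the sign into $b_\xi$; this is compatible with the symmetry $\overline{(-\xi)}=-\bar{\xi}$. Set
\[
\phi\triangleq\sum_{\xi\in\Lambda}b_\xi e^{2\pi\mathrm{i}\xi\cdot x},\qquad \psi\triangleq\frac{1}{2\pi}\phi .
\]
By the same symmetry argument as in Step 1, $\phi$ is real, so $|\phi|^2=\phi^2$. Since $\nabla^\perp e^{2\pi\mathrm{i}\xi\cdot x}=2\pi\mathrm{i}\,\xi^\perp e^{2\pi\mathrm{i}\xi\cdot x}$, we have $W=\nabla^\perp\psi$. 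Because $|\xi|=1$ for every $\xi\in\Lambda$, $\psi$ is a Laplace eigenfunction: $\Delta\psi=-4\pi^2\psi$.

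\emph{Step 3: the identity.} Since $\operatorname{div}W=0$, we have $\operatorname{div}(W\otimes W)=(W\cdot\nabla)W$. In 2D,
\[
(W\cdot\nabla)W=\tfrac12\nabla|W|^2+\omega\,W^{\perp},\qquad \omega=\operatorname{curl}W .
\]
With $W=\nabla^\perp\psi$ we get $\omega=\Delta\psi=-4\pi^2\psi$ and $W^\perp=-\nabla\psi$. Hence
\[
\omega W^\perp=4\pi^2\psi\nabla\psi=2\pi^2\nabla\psi^2=\tfrac12\nabla\phi^2 .
\]

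The main obstacle is this sign bookkeeping. The target identity has $-\tfrac12\nabla|\phi|^2$, so I must verify the signs in the 2D identity and in $W^\perp=\pm\nabla\psi$ under one consistent orientation of $\perp$. If the signs do not match, I would fall back on a direct check in Fourier modes. For a pair $(\xi,\xi')$, the term $b_\xi b_{\xi'}\,\mathrm{i}\,\bar{\xi}\cdot(2\pi\mathrm{i}\,\xi')\,\mathrm{i}\,\bar{\xi}'e^{2\pi\mathrm{i}(\xi+\xi')\cdot x}$ would be compared, after symmetrizing in $\xi\leftrightarrow\xi'$, against the gradient of $b_\xi b_{\xi'}(-\bar{\xi}\cdot\bar{\xi}'-1)e^{2\pi\mathrm{i}(\xi+\xi')\cdot x}/2$. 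This uses the 2D relations $\bar{\xi}\cdot\bar{\xi}'=\xi\cdot\xi'$ and $(\bar{\xi}\cdot\xi')\bar{\xi}'+(\bar{\xi}'\cdot\xi)\bar{\xi}=(\xi\cdot\xi'-1)(\xi+\xi')$ for unit vectors. The pairs with $\xi'=-\xi$ contribute zero, consistent with the constant-in-$x$ diagonal part of both sides.
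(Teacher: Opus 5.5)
\textbf{The gap is the final sign, and it cannot be closed.} Your computation is correct, but it proves a different identity from the one stated: the printed identity is false. The paper gives no proof of this lemma (it only cites it), so there is no route to compare against. With $\perp$ the rotation $(x_1,x_2)\mapsto(-x_2,x_1)$, your Steps 2--3 give $(W\cdot\nabla)W=\frac12\nabla|W|^2+\omega W^\perp$, with $\omega=\Delta\psi=-4\pi^2\psi$ and $W^\perp=-\nabla\psi$. Hence
\[
\operatorname{div}(W\otimes W)=\tfrac12\nabla\bigl(|W|^2+\phi^2\bigr)=\tfrac12\nabla\bigl(|W|^2+|\phi|^2\bigr),
\]
since $\phi$ is real. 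All of these signs check out, so the ``obstacle'' you flag is not a bookkeeping issue.

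To see that the minus version cannot hold, fix $\xi\in\Lambda$ with $b_\xi\neq0$ and look at the coefficient of $e^{4\pi\mathrm{i}\xi\cdot x}$. For unit vectors, $\xi_1+\xi_2=2\xi$ forces $\xi_1=\xi_2=\xi$, so only the diagonal pair contributes. On the left that pair carries the factor $\bar\xi\cdot\xi=0$. On the right, $\frac12\nabla(|W|^2-|\phi|^2)$ has coefficient $-4\pi\mathrm{i}\,b_\xi^2\,\xi\neq0$. Concretely, $\Lambda=\{\pm e_1\}$ with $e_1=(1,0)$ gives the shear $W=-2b\sin(2\pi x_1)(0,1)$. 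Then $\operatorname{div}(W\otimes W)=0$ and $|W|^2+\phi^2=4b^2$, but $|W|^2-\phi^2=-4b^2\cos(4\pi x_1)$.

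Your fallback would only confirm this. The relation $(\bar\xi\cdot\xi')\bar\xi'+(\bar\xi'\cdot\xi)\bar\xi=(\xi\cdot\xi'-1)(\xi+\xi')$ is correct. It produces the factor $\bar\xi\cdot\bar\xi'-1$, which is the pair-term of $\frac12\nabla(|W|^2+\phi^2)$, not of $\frac12\nabla(|W|^2-\phi^2)$; the two sides differ by exactly $\nabla\phi^2$. So you should conclude with the plus sign and say explicitly that the lemma as printed has a sign error. In the paper's use in \eqref{PP2} (and its analogue for $G_1$), all that matters is that this term is a pure gradient, which $\mathbb{P}$ annihilates, so the application survives.

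\textbf{A smaller point: the orientation convention.} Absorbing variable orientations into $b_\xi$ is not harmless. Suppose $\bar\xi=\sigma_\xi\xi^\perp$ with $\sigma_\xi=\sigma_{-\xi}\in\{\pm1\}$. Then $W=\nabla^\perp\bigl(\frac{1}{2\pi}\sum_\xi\sigma_\xi b_\xi e^{2\pi\mathrm{i}\xi\cdot x}\bigr)$, and the identity holds with $\sum_\xi\sigma_\xi b_\xi e^{2\pi\mathrm{i}\xi\cdot x}$ in place of $\phi$. That function generally has a different modulus. For example, take $\Lambda=\{\pm e_1,\pm e_2\}$ with $\sigma_{\pm e_1}=1$, $\sigma_{\pm e_2}=-1$, $b_{\pm e_j}=b_j$. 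Then $\operatorname{div}(W\otimes W)=\nabla\bigl(-4b_1b_2\cos(2\pi x_1)\cos(2\pi x_2)\bigr)$, whereas $\frac12\nabla(|W|^2+\phi^2)=\nabla\bigl(4b_1b_2\cos(2\pi x_1)\cos(2\pi x_2)\bigr)$. The lemma therefore needs the uniform convention $\bar\xi=\xi^\perp$ (or $-\xi^\perp$ for every $\xi$). This is the convention the paper implicitly uses, since $\bar\xi$ in \eqref{defiv0p} comes from the symbol of $\nabla^\perp$. Likewise, your Step 1 needs $\overline{(-\xi)}=-\bar\xi$, which is not a hypothesis of the lemma but follows from that convention.
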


\section{Fractional Leibniz commutator estimate}

\begin{lemm}
    Let $P,Q,s>0$ and $\xi\in\mathbb{T}^2\setminus\{0\}$. For any smooth functions $f$ satisfying 
    \begin{gather}
        \|\nabla^nf\|_{_{L^\infty(\mathbb{T}^2)}}\lesssim P^n,\quad\quad\forall n\in\mathbb{N},\label{nf}
    \end{gather}
     we have
    \begin{gather}
        \|\nabla^m[(-\Delta)^{s},f] e^{2\pi i Q\xi\cdot x}\|_{L^\infty(\mathbb{T}^2)}\lesssim (Q+P)^m(Q^{2s-1}P^4+P^{2s+3}),\label{commuator}
    \end{gather}
    for any $m\in\mathbb{N}$.
\end{lemm}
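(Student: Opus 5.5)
The plan is to write the commutator on the Fourier side and Taylor-expand the symbol. Expand $f(x)=\sum_{l\in\mathbb{Z}^2}\hat f(l)e^{2\pi i l\cdot x}$. Then
\begin{align*}
[(-\Delta)^{s},f]e^{2\pi iQ\xi\cdot x}=\sum_{l}\hat f(l)\bigl(|2\pi(l+Q\xi)|^{2s}-|2\pi Q\xi|^{2s}\bigr)e^{2\pi i(l+Q\xi)\cdot x},
\end{align*}
where the sign convention of the commutator is irrelevant for the bound. Applying $\nabla^m$ produces at most a factor $|l+Q\xi|^m\lesssim (Q+|l|)^m$. The decay of $\hat f$ comes from \eqref{nf} exactly as in \eqref{hataxik}: by Parseval, $|l|^n|\hat f(l)|\lesssim P^n$ for every $n$, so $|\hat f(l)|\lesssim \min\{1,(P/|l|)^n\}$. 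The $L^\infty$ norm is then bounded by the $\ell^1$ sum of the coefficients, and it remains to estimate that sum.

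Split the sum into two regions.

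\emph{Region $|l|\le Q|\xi|/2$.} The map $\zeta\mapsto|\zeta|^{2s}$ is smooth away from the origin, and the segment from $Q\xi$ to $Q\xi+l$ stays at distance $\gtrsim Q$ from $0$. The mean value theorem therefore gives
\[
\bigl||l+Q\xi|^{2s}-|Q\xi|^{2s}\bigr|\lesssim Q^{2s-1}|l|.
\]
Multiplying by $(Q+P)^m$ and using $\sum_l |l|\,|\hat f(l)|\lesssim \sum_l |l|\min\{1,(P/|l|)^{5}\}\lesssim P^{4}$ gives the term $(Q+P)^mQ^{2s-1}P^4$. To see the sum bound, note that the sum over $|l|\le P$ is $\lesssim P^3$, while for $|l|>P$ one has $\sum_{|l|>P} P^5|l|^{-4}\lesssim P^3$. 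The total $\lesssim P^3$ is harmless; replacing it by $P^4$ is allowed when $P\gtrsim1$, which is the regime used, since $P=N_{k-1}$ or $\lambda$. If $P<1$, only the mode $l=0$ survives up to rapidly decaying tails, and that mode gives zero commutator.

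\emph{Region $|l|>Q|\xi|/2$.} Here bound crudely:
\[
\bigl||l+Q\xi|^{2s}-|Q\xi|^{2s}\bigr|\lesssim |l|^{2s}.
\]
Take $n=2s+5$ in the decay estimate. The sum is then $\lesssim (Q+P)^m\sum_{l\neq0}|l|^{2s}P^{2s+5}|l|^{-2s-5}\lesssim (Q+P)^mP^{2s+5}|l|_{\min}^{-3}$. When $Q\gtrsim P$ this is $\lesssim (Q+P)^mP^{2s+3}$; when $Q\lesssim P$ the sum is already $\lesssim P^{2s+3}$, after summing the region $|l|\le P$ with $|\hat f|\lesssim 1$ and then adjusting $n$. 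The factor $|l+Q\xi|^m\lesssim |l|^m$ is absorbed by increasing $n$ by $m$, which at worst changes constants and gives $(Q+P)^m$.

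The main obstacle is bookkeeping rather than any idea. The torus Fourier sum needs summable decay in two dimensions, which is why the exponents are $4$ and $2s+3$ (extra powers for summability), and one must check that the $\ell^1$ sums produce exactly the stated powers. The mean-value step near $Q\xi$ is the genuine content. I would carry out both regions with explicit choices of $n$, assume $P\ge1$ as in the applications, and keep the constants depending on $s$, $m$ and $\xi$ only.
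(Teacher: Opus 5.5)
Your proposal is correct and follows essentially the same route as the paper: expand $f$ in Fourier series, get the decay $|l|^n|\hat f(l)|\lesssim P^n$ from Parseval, and split the lattice sum into a low-$|l|$ region, where the symbol difference is bounded by $Q^{2s-1}|l|$, and a high-$|l|$ region, where it is bounded crudely by $|l|^{2s}$. The differences are cosmetic. Your cutoff at $Q|\xi|/2$ makes the mean-value step cleaner than the paper's cutoff at $|Q\xi|$, and you handle $\nabla^m$ through the multiplier $|l+Q\xi|^m$ instead of the paper's identity $\nabla[(-\Delta)^s,f]e^{2\pi iQ\xi\cdot x}=[(-\Delta)^s,\nabla f]e^{2\pi iQ\xi\cdot x}+[(-\Delta)^s,f]\nabla e^{2\pi iQ\xi\cdot x}$; also, the paper's summation $\max_l|l|^4|\hat f(l)|\sum_{l\neq0}|l|^{-3}$ works for every $P>0$ and so avoids your case split $P\ge1$ versus $P<1$.
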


\begin{proof}
    Taking the fourier series of $f$, 
    \begin{gather*}
        f(x)=\sum_{l\in\mathbb{Z}^2}\hat{f}(l)e^{2\pi i l\cdot x}.
    \end{gather*}
     By \eqref{nf} and Parseval's identity, we obtain
      \begin{gather*}
\|\nabla^mf\|_{L^2(\mathbb{T}^2)}=\left(\sum_{l\in\mathbb{Z}^2}|l|^{2m}|\hat{f}(l)|^2\right)^{\frac{1}{2}}\lesssim \|\nabla^m f\|_{L^\infty(\mathbb{T}^2)}\lesssim P^m,\quad\quad\forall m\in\mathbb{N},
      \end{gather*}
      which immediately implies
      \begin{gather}
          |l|^{m}|\hat{f}(l)|\lesssim P^m,\quad\quad\forall (l,m)\in \mathbb{Z}^2\times \mathbb{N}.
      \end{gather}
    we compute
    \begin{align*}
        [(-\Delta)^{s},f(x)]e^{2\pi i Q\xi\cdot x}&=(-\Delta)^{s}(f(x)e^{2\pi i Q\xi\cdot x})-f(x)(-\Delta)^{s}e^{2\pi i Q\xi\cdot x}\\
        &=\sum_{l\in\mathbb{Z}^2}(2\pi)^{2s}\left(|l+Q\xi|^{2s}-|Q\xi|^{2s}\right)\hat{f}(l)e^{2\pi i(l+Q\xi)\cdot x}\\
        &=\left(\sum_{0<|l|\leq |Q\xi|}+\sum_{|l|>|Q\xi|}\right)(2\pi)^{2s}\left(|l+Q\xi|^{2s}-|Q\xi|^{2s}\right)\hat{f}(l)e^{2\pi i(l+Q\xi)\cdot x}\\
        &\triangleq L_1+L_2.
    \end{align*}
    For $L_1$, we estimate
    \begin{align}
        \|L_1\|_{L^\infty(\mathbb{T}^2)}&\lesssim\sum_{0<|l|\leq |Q\xi|}Q^{2s}\left(|\frac{l}{Q}+\xi|^{2s}-|\xi|^{2s}\right)|\hat{f}(l)|\nonumber\\
        &\lesssim \sum_{0<|l|\leq |Q\xi|}Q^{2s}\cdot |\frac{l}{Q}||\hat{f}(l)|\nonumber\\
        &\lesssim  Q^{2s-1} \max_l |l|^4|\hat{f}(l)| \sum_{l\neq0}\frac{1}{|l|^3}\nonumber\\
        &\lesssim  Q^{2s-1} P^4.\label{L1}
    \end{align}
    For $L_2$, we get
    \begin{align}
        \|L_2\|_{L^\infty(\mathbb{T}^2)}\lesssim\sum_{|l|>|Q\xi|}|l|^{2s}|\hat{f}(l)|
        \lesssim \max_l |l|^{2s+3}|\hat{f}(l)|\sum_{l\neq0}\frac{1}{|l|^3}
        \lesssim P^{2s+3}.\label{L2}
    \end{align}
    Combining \eqref{L1} and \eqref{L2} together, we obtain \eqref{commuator} for the case $m=0$. For $m\geq1$, consider the case $m=1$ as an example, we have
    \begin{align*}
        \nabla[(-\Delta)^{s},f]e^{2\pi i Q\xi\cdot x}=[(-\Delta)^{s},\nabla f]e^{2\pi i Q\xi\cdot x}+[(-\Delta)^{s},f]\nabla e^{2\pi i Q\xi\cdot x}.
    \end{align*}
  The desired estimates follow by a similar argument as above, so we omit the details.
\end{proof}

\section{Heat estimates for the linear heat semigroup}

Now recall the heat estimates. 
\begin{lemm}
    For all $m \ge 0$ and $f \in L^\infty(\mathbb{T}^2)$,
we have
\begin{align}\label{2.7}
    \|e^{-t(-\Delta)^\beta} \nabla^m f\|_{L^\infty(\mathbb{T}^2)} \lesssim_{m} t^{-\frac{m}{2\beta}} \|f\|_{L^\infty(\mathbb{T}^2)},\quad\forall t>0.
\end{align}
Moreover, the above estimate holds as well if $f$ on the left-hand side is replaced by $\mathbb{P}f$ with $m>0$.
\end{lemm}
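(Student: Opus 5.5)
The plan is to argue via the Fourier-multiplier representation of the fractional heat semigroup on $\mathbb{T}^2$, and to reduce \eqref{2.7} to a uniform $L^1$ bound on a convolution kernel. Write $e^{-t(-\Delta)^\beta}\nabla^m f = K^{(m)}_t * f$ with
\[
K^{(m)}_t(x)=\sum_{l\in\mathbb{Z}^2}(2\pi \ii l)^{\otimes m}e^{-t(2\pi|l|)^{2\beta}}e^{2\pi \ii l\cdot x}.
\]
By Young's inequality on the torus it then suffices to prove $\|K^{(m)}_t\|_{L^1(\mathbb{T}^2)}\lesssim_m t^{-\frac{m}{2\beta}}$.

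Poisson summation gives $K^{(m)}_t(x)=\sum_{n\in\mathbb{Z}^2}\mathcal{K}^{(m)}_t(x+n)$. Here $\mathcal{K}^{(m)}_t$ is the whole-space kernel with symbol $(2\pi\ii\zeta)^{\otimes m}e^{-t|2\pi\zeta|^{2\beta}}$, so that $\|K^{(m)}_t\|_{L^1(\mathbb{T}^2)}\le\|\mathcal{K}^{(m)}_t\|_{L^1(\mathbb{R}^2)}$. Scaling gives
\[
\mathcal{K}^{(m)}_t(x)=t^{-\frac{m+2}{2\beta}}\mathcal{K}^{(m)}_1\bigl(t^{-\frac{1}{2\beta}}x\bigr),
\]
hence $\|\mathcal{K}^{(m)}_t\|_{L^1}=t^{-\frac{m}{2\beta}}\|\mathcal{K}^{(m)}_1\|_{L^1}$, and everything reduces to $\mathcal{K}^{(m)}_1\in L^1(\mathbb{R}^2)$.

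\textbf{Main obstacle: integrability of the profile.} This is the one substantive step, because for non-integer $\beta$ the symbol $\zeta^{\otimes m}e^{-|2\pi\zeta|^{2\beta}}$ is not smooth at the origin. The symbol is Schwartz away from $0$, so $\mathcal{K}^{(m)}_1$ is bounded and smooth. For decay I would use the standard homogeneity argument: near $\zeta=0$ the symbol behaves like $\zeta^{\otimes m}$ plus terms homogeneous of degree $m+2\beta k$, $k\ge1$. A Littlewood–Paley decomposition of the symbol, combined with integration by parts on each dyadic piece, then gives
\[
|\mathcal{K}^{(m)}_1(x)|\lesssim (1+|x|)^{-2-\min(2\beta,\,m+2\beta)}.
\]
This is integrable in dimension two since $\beta>0$. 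The $m=0$ case is the known decay $(1+|x|)^{-2-2\beta}$ of the fractional heat kernel, and derivatives only improve decay.

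\textbf{The Leray projection.} For $\mathbb{P}\nabla^m$ with $m\ge1$, the symbol $(\mathrm{Id}-\zeta\otimes\zeta/|\zeta|^2)\zeta^{\otimes m}e^{-|2\pi\zeta|^{2\beta}}$ is homogeneous of degree $m\ge1$ near the origin and Schwartz at infinity. The same dyadic argument gives $|\mathcal{K}(x)|\lesssim(1+|x|)^{-2-\min(m,2\beta)}$, which is integrable. Periodization is legitimate because $m>0$ kills the zero mode, so the periodic multiplier agrees with the sampled whole-space symbol at every $l\in\mathbb{Z}^2$, including $l=0$ where both vanish.

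Scaling then yields the stated bound uniformly in $t>0$, for all $t$ and not only $t\le1$, since the $L^1$ bound on the torus is dominated by the whole-space one.
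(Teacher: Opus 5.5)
The paper gives no proof of this lemma. It only recalls it as a standard heat estimate, so your argument supplies a justification the paper leaves to the literature rather than competing with an existing proof.

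Your route is correct:
\begin{itemize}
\item Young's inequality reduces matters to $\|K^{(m)}_t\|_{L^1(\mathbb{T}^2)}$.
\item Periodization dominates this by $\|\mathcal{K}^{(m)}_t\|_{L^1(\mathbb{R}^2)}$. This is legitimate once $\mathcal{K}^{(m)}_1\in L^1(\mathbb{R}^2)$, because the periodized function then has the sampled symbol as its Fourier coefficients, including the vanishing one at $l=0$ in the $\mathbb{P}$ case.
\item Scaling gives exactly $t^{-\frac{m}{2\beta}}$ for every $t>0$.
\end{itemize}
Your version also makes visible something a bare citation hides. For $\beta>1$ the kernel changes sign, so even $m=0$ needs genuine $L^1$ integrability rather than positivity and unit mass.

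One point to make explicit. If you want the dyadic argument itself, rather than the quoted $m=0$ decay, to cover $m=0$, you must first subtract $\theta(\zeta)(2\pi\ii\zeta)^{\otimes m}$, where $\theta\in C_c^\infty$ equals $1$ near $0$. That piece has a Schwartz kernel. Without the subtraction, the $m=0$ symbol only obeys degree-$0$ symbol bounds, and the dyadic sum then gives the non-integrable tail $|x|^{-2}$. After the subtraction, the remainder satisfies $|\partial^\gamma\sigma(\zeta)|\lesssim|\zeta|^{m+2\beta-|\gamma|}$ near $0$, which yields the sharper tail $|x|^{-2-m-2\beta}$. Your exponent $\min(2\beta,m+2\beta)$ is simply $2\beta$.

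Your approach has a further advantage. It applies verbatim to multipliers $|\zeta|^{a}e^{-|2\pi\zeta|^{2\beta}}$ with real $a>0$: the leading term is homogeneous of degree $a$, the tail is $|x|^{-2-a}$, and the symbol vanishes at $l=0$. This is what Section~\ref{sec:4} actually uses when it invokes the lemma with the fractional exponents $\frac{2[\beta]-\kappa}{2\beta}$ and $\frac{2[\beta]-\beta}{2\beta}$. The lemma as stated, for $\nabla^m$, does not literally cover these cases.
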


\smallskip
\noindent\textbf{Acknowledgments} This work was partially supported by the National Natural Science Foundation of China (No.12571261). 

%(No.11671407 and No.11701586), the Macao Science and Technology Development Fund (No. 098/2013/A3), and Guangdong Province of China Special Support Program (No. 8-2015),
%and the key project of the Natural Science Foundation of Guangdong province (No. 2016A030311004).

%The authors thank the referee for valuable comments and suggestions.

\phantomsection
\addcontentsline{toc}{section}{\refname}
%Ìí¼Ó²Î¿¼ÎÄÏ×µ½ÊéÇ©£¬ºê°ü hyperref
%\bibliographystyle{abbrv} %plain ,%alpha, %abbrv
%\bibliography{Reference}

\printbibliography

\end{document}